\newbox\xratbelow
\newbox\xratabove
\newtheorem{theorem}{Theorem}[section]
\newtheorem{corollary}[theorem]{Corollary}
\newtheorem{lemma}[theorem]{Lemma}
\newtheorem{proposition}[theorem]{Proposition}
\theoremstyle{definition}
\newtheorem{definition}[theorem]{Definition}
\newtheorem{remark}[theorem]{Remark}
\newtheorem*{notation}{Notations}
\newtheorem{hypothesis}{Hypothesis}
\newcommand{\xlongarrow}[2][]{%
\setbox\xratbelow=\hbox{\ensuremath{\scriptstyle #1}}%
\setbox\xratabove=\hbox{\ensuremath{\scriptstyle #2}}%
\pgfmathsetlengthmacro{\xratlen}{max(\wd\xratbelow, \wd\xratabove) + .6em}%
\mathrel{\tikz [baseline=-.75ex]
  \draw (0,0) edge[commutative diagrams/rightarrow] node[below] {\box\xratbelow}
  node[above] {\box\xratabove}
  (\xratlen,0) ;}} 
\newcommand{\Lim}[1]{\raisebox{0.5ex}{\scalebox{0.8}{$\displaystyle \lim_{#1}\;$}}}
\def\dif{\mathop{}\hphantom{\mskip-\thinmuskip}\mathrm{d}}%
\let\daccent\d
\let\d\relax
\newcommand\d{\ifmmode\dif\else\expandafter\daccent\fi}
\begin{document}

\title {Singular dynamics for discrete weak K.A.M. solutions of exact twist maps}

\author{Jianxing Du$^{\dag}$}
\address{School of Mathematical Sciences, Beijing Normal University, No. 19, XinJieKouWai St., HaiDian District, Beijing 100875, P. R. China}
\email{jianxingdu@mail.bnu.edu.cn\\ jxdu000@gmail.com}

\author{Xifeng Su$^{\dag}$}
\address{School of Mathematical Sciences, Laboratory of Mathematics and Complex Systems (Ministry of Education)\\
	Beijing Normal University,
	No. 19, XinJieKouWai St., HaiDian District, Beijing 100875, P. R. China}
\email{xfsu@bnu.edu.cn\\ billy3492@gmail.com}

\date{}

\keywords{twist maps, discrete weak K.A.M. theory, propagation of singularities}

\subjclass[2010]{37E10, 37E40,  37J10, 37J30, 37J35}

\thanks{The datasets generated during and/or analyzed during the current study are available from the corresponding author on reasonable request.}

\begin{abstract}  
For any exact twist map $f$ and any cohomology class $c\in\mathbb{R}$, let $u_c$ be any associated discrete weak K.A.M. solution, 
and we introduce an inherent Lipschitz dynamics $\Sigma_+$ given by the discrete forward Lax-Oleinik semigroup. 
We investigate several properties of $\Sigma_+$ and show that  the non-differentiable points of $u_c$ are globally propagated and forward invariant by $\Sigma_+$.
In particular, such propagating dynamics possesses the same rotation number $\alpha'(c)$ as the associated Aubry-Mather set at cohomology class $c$.

As applications, we provide via $\Sigma_+$ {a discrete analogue of Bernard's regularization theorem \cite{Ber07} and} a detailed exposition of  Arnaud's observation \cite{Arnaud_2011}. Furthermore, we construct and analyze the corresponding dynamics  on the full pseudo-graphs of discrete weak K.A.M. solutions.

\end{abstract}

\maketitle


\section{Introduction}
In the 1890 volume of Acta Mathematica, H. Poincar\'e introduce some of the basic ideas about twist maps of the annulus while he researched on the stability of orbits of the three body problem.

We focus on some recent developments on the twist maps. The celebrated Aubry-Mather theory developed independently by S. Aubry \cite{Aubry83, aubry1983discrete} and J. Mather \cite{Mather82} provides a novel approach to the study of the dynamics of twist diffeomorphisms of the annulus (a weak analogue for twist homeomorphisms can be found in \cite{KO97}). Especially, the existence of action minimizing sets (referred as Aubry-Mather sets) --which could be invariant circles or invariant Cantor sets etc--is established. 
Moreover, one may have
\begin{itemize}
\item every Aubry-Mather set is a subset of a homotopically nontrivial  circle that is a graph over the zero section,

\item every Aubry-Mather set is associated a rotation number and

\item all the Aubry-Mather sets are vertically ordered in the annulus according to the rotation numbers. See e.g. \cite{Golebook}.
\end{itemize}

We concentrate on those Aubry-Mather sets with gaps and intend to study how these gaps evolve. To this aim, we will introduce a singular dynamics such that 
 the gaps are forward invariant by  such dynamics.

Let $f$ be an exact twist map on the annulus $\mathbb{T}\times \mathbb{R}$ and $F$ be its lift on its universal cover $\mathbb{R}^2$. 
Because of the exactness of $f$,  one can associate  $F$ with  a $C^2$ generating function $S: \mathbb{R}^2\rightarrow\mathbb{R}$. 

Let $c\in \mathbb{R}$  be some cohomology class\footnote{In the present paper, the cohomology class $c$ is fixed at the beginning and one can assume $c=0$ for simplicity. The reason why we insist writing $c$ explicitly in the introduction is that we will apply the results here for future work, such as establishing $c$-equivalence tools (see \cite{CY04}) for searching for diffusion orbits in the discrete setting.} 
of $\mathbb{T}$. One can also define the lift $F_c$ and the corresponding generating function $S_c$ at cohomology class $c$ given by
\[
F_c(x, p)= F(x, p+c) - (0, c) \text{ and } S_c(x,y) = S(x, y) + c(x-y) \text{ respectively}. 
\]
We then define respectively the  \emph{discrete backward and forward Lax-Oleinik operators} $T^-$ and $T^+$ by 
\begin{align*}
  T^- [u] (x) := &\inf_{y \in \mathbb{R}} \big\{ u(y) + S_c (y, x) \big\}, \qquad \forall x\in\mathbb{R}\\
  T^+ [u] (x) := &\sup_{y \in \mathbb{R}} \big\{ u(y) - S_c (x, y) \big\}, \qquad \forall x\in\mathbb{R}
\end{align*}
for every continuous $1$-periodic function $u:\mathbb{R}\rightarrow\mathbb{R}$. Then one can prove that there exists a unique constant $\overline{S}_c\in\mathbb{R}$ such that the equation $u=T^-[u]-\overline{S}_c$ admits at least one solution $u_c$ (see for example \cite{GT11, SuT18}). This min-plus eigenvalue problem in ergodic optimization gives rise to the following two notions in celebrated Mather-Fathi theory (see \cite{Mather91,Fathi97}) for Tonelli Lagrangian dynamics:
\begin{itemize}
\item \emph{Mather's $\alpha$-function}: $H^1(\mathbb{T},\mathbb{R})\rightarrow \mathbb{R}, c\mapsto -\overline{S}_c$, which is differentiable everywhere and so  the corresponding rotation number equals $\alpha'(c)$.
\item a \emph{discrete weak K.A.M. solution} $u_c$ for $f$ at cohomology class $c$, which is semi-concave and so  differentiable almost everywhere. 
\end{itemize}

%
%
%
%
We now state our first result below:

\begin{theorem}\label{thm:main}
  Let $f$ be a twist map of the annulus and $c\in \mathbb{R}$ be some cohomology class of $\mathbb{T}$. Let $u_c: \mathbb{R}\rightarrow\mathbb{R}$ be a discrete weak K.A.M. solution for $f$ at cohomology class $c$. Then the optimal forward map $$\Sigma_+:\mathbb{R}\rightarrow\mathbb{R},\ x\mapsto \mathop{\arg\max}_{y\in \mathbb{R}} \{u_c(y) - S_c(x_n, y) \}$$ is single-valued.
 Moreover,  the map $\Sigma_+$ satisfies the following properties: 
 \begin{enumerate}
    \item [(i)] $\Sigma_+$ is a lift of a circle map with degree $1$, i.e., $\Sigma_+(x+1)=\Sigma_+(x)+1$;
    \item [(ii)] $\Sigma_+$ is  non-decreasing and Lipschitz continuous;
    \item [(iii)] $\Sigma_+$ propagates singularities of $u_c$, i.e., if x is not a differentiable point of $u_c$, neither is $\Sigma_+(x)$; for simplicity, we denote by $\operatorname{Sing}(u_c)$ the set of all the non-differentiable points of $u_c$;
    \item [(iv)] $\Sigma_+$ admits a rotation number for positive iterations, that is, 
    \begin{align*}
      \rho(\Sigma_+):=\lim_{n\to+\infty}\frac{1}{n}\left( \Sigma_{+}^n(x)-x \right)
    \end{align*}
  exists for all $x\in\mathbb{R}$ and is independent of $x$. In fact, $\rho(\Sigma_+) = \alpha'(c)$.
  \end{enumerate}
\end{theorem}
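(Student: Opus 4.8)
The plan is to first establish single-valuedness and the structural properties (i)–(iii), then deduce the rotation number identity (iv) from them by comparison with the Aubry–Mather dynamics.

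For single-valuedness: fix $x$ and consider $y\mapsto u_c(y)-S_c(x,y)$. Since $u_c$ is semi-concave (being a discrete weak K.A.M. solution) and $-S_c(x,\cdot)$ is smooth, and by the superlinearity coming from the twist condition the supremum is attained, so the $\arg\max$ set is nonempty and compact. The key point is that $S_c$ satisfies the twist condition, so $-\partial_{xy}^2 S_c>0$ (or $<0$, depending on the sign convention); this gives that for $x_1<x_2$ the function $y\mapsto S_c(x_1,y)-S_c(x_2,y)$ is strictly monotone in $y$. Combined with semi-concavity of $u_c$, a standard argument (as in \cite{Arnaud_2011} or the Tonelli setting) shows that if $y_1\in\Sigma_+(x_1)$ and $y_2\in\Sigma_+(x_2)$ with $x_1<x_2$ then $y_1\le y_2$; applying this with $x_1=x_2$ collapses the $\arg\max$ to a single point wherever $u_c$ is differentiable, and then a limiting/ordering argument forces single-valuedness everywhere. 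This monotonicity argument simultaneously proves (ii) (monotone, and Lipschitz because the relation $D^+u_c(y)\ni \partial_y S_c(x,y)$ together with the twist inequality bounds the variation of $y$ in terms of that of $x$) and the non-decreasing part. Property (i) is immediate from $S_c(x+1,y+1)=S_c(x,y)$ and $u_c(y+1)=u_c(y)+c'$ for the appropriate constant, so the $\arg\max$ over $y$ shifts by $1$ when $x$ shifts by $1$. Property (iii) is the crucial geometric fact: if $y=\Sigma_+(x)$ then the function $u_c(\cdot)-S_c(x,\cdot)$ has an interior maximum at $y$, so $0\in D^+(u_c-S_c(x,\cdot))(y)=D^+u_c(y)-\partial_yS_c(x,y)$; if $u_c$ were differentiable at $y$ this would force $x$ to be the unique backward-calibrated point through $y$, and one checks — using that $u_c=T^-[u_c]-\overline S_c$ so that $u_c$ is also a fixed point of the backward semigroup — that this in turn forces $u_c$ to be differentiable at $x$ (the calibrated curve through a differentiable point is unique on both sides). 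Contrapositively, $x\in\operatorname{Sing}(u_c)$ implies $\Sigma_+(x)\in\operatorname{Sing}(u_c)$.

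For (iv): since $\Sigma_+$ is a degree-one lift of an orientation-preserving (non-decreasing) circle map, classical Poincaré rotation theory gives that $\rho(\Sigma_+)=\lim_{n\to\infty}\tfrac1n(\Sigma_+^n(x)-x)$ exists and is independent of $x$. To identify it with $\alpha'(c)$, I would use that the projected Aubry–Mather set $\mathcal A_c$ at cohomology class $c$ is contained in $\mathbb{R}$ (the base), is invariant under $\Sigma_+$ — because on calibrated (two-sided weak K.A.M.) orbits the forward optimizer coincides with the genuine dynamics $\pi\circ F_c$ restricted to the Aubry set, where $\pi$ is the base projection — and the restriction of $\Sigma_+$ to $\mathcal A_c$ is exactly the Aubry–Mather circle map, whose rotation number is $\alpha'(c)$ by the Mather–Fathi correspondence recalled in the introduction. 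Since a degree-one monotone circle map has a single rotation number shared by every orbit, in particular by every orbit in the invariant set $\mathcal A_c$, we conclude $\rho(\Sigma_+)=\alpha'(c)$.

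The main obstacle I anticipate is property (iii) together with the precise compatibility needed in (iv): namely showing that $\Sigma_+$ genuinely extends the Aubry–Mather dynamics and that an optimizer $y=\Sigma_+(x)$ always lies in the interior so that the first-order condition $\partial_yS_c(x,y)\in D^+u_c(y)$ holds, which is what links non-differentiability of $u_c$ at $x$ to non-differentiability at $\Sigma_+(x)$. Care is also needed to verify that $\Sigma_+$ restricted to $\mathcal A_c$ coincides with the Mather map rather than merely being semiconjugate to something with the same rotation number; this is where one must invoke the fact that weak K.A.M. solutions are calibrated along the Aubry set, so the sup defining $T^+$ is attained precisely at the dynamical image. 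The Lipschitz bound in (ii) will follow cleanly once the twist inequality is quantified on the compact annulus.
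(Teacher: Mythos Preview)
Your overall architecture matches the paper's: monotonicity via the twist/non-crossing inequality, degree-one periodicity from diagonal periodicity of $S_c$, rotation number via Poincar\'e theory and identification on the Aubry set. Items (i), (iii), (iv) are essentially the paper's arguments. In particular your contrapositive for (iii) is correct: if $u_c$ were differentiable at $y=\Sigma_+(x)$, the first-order condition $\partial_2 S_c(x,y)=\d u_c(y)$ together with ferromagnetism forces $x$ to be the (unique) backward optimizer through $y$, and backward optimizers are points of differentiability of $u_c$ (this is the $C^{1,1}$-on-$\mathcal O[u_c]$ regularity, which you should state explicitly rather than invoke as ``one checks'').

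The genuine gap is single-valuedness. Writing ``applying this with $x_1=x_2$'' does nothing: the monotone ordering $y_1\le y_2$ for $x_1<x_2$ is vacuous at $x_1=x_2$, and a limiting/ordering argument cannot rule out a jump. Concretely, monotonicity is perfectly compatible with $\Sigma_+(x_0)=[y_1,y_2]$ being a nondegenerate interval while $\Sigma_+$ is single-valued and monotone on $\mathbb R\setminus\{x_0\}$, skipping $(y_1,y_2)$. What the paper uses to kill this scenario is a two-sided input you never invoke: write $u_c=T^-[u_c]-\overline S_c$, so every $z\in(y_1,y_2)$ has a nonempty \emph{backward} optimizer $\Sigma_-[u_c](z)$; by the duality between $\Sigma_-$ and $\Sigma_+$ any such backward optimizer $x$ satisfies $z\in\Sigma_+[u_c](x)$, and the ordering forces $x=x_0$. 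Thus $x_0\in\Sigma_-[u_c](z_1)\cap\Sigma_-[u_c](z_2)$ for distinct $z_1,z_2$, contradicting the fact that $\Sigma_-[v]$ is \emph{injective} whenever $v$ is semiconcave (because $-\partial_1 S_c(x,\cdot)$ is a homeomorphism and lands in the subdifferential $\nabla^-v(x)$, which is a singleton at semiconcave points). You need both the surjectivity via $\Sigma_-$ and this injectivity; neither appears in your sketch.

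Your Lipschitz argument is also too loose. The inclusion $\partial_2 S_c(x,y)\in\nabla^+u_c(y)$ plus twist gives at best a lower Lipschitz bound on $x$ in terms of $y$, not the upper bound on $y$ in terms of $x$ that you need. The paper instead first proves Lipschitz continuity on the calibrated set $\mathcal O[u_c]=\Sigma_-[u_c](\mathbb R)$, where $\Sigma_+(x)=\pi_1\circ F(x,\d u_c(x))$ and $\d u_c$ is Lipschitz there; then it extends to all of $\mathbb R$ by showing that for arbitrary $x_1\le x_2$ one can find $x_1',x_2'\in\mathcal O[u_c]$ with $x_1\le x_1'\le x_2'\le x_2$ and $\Sigma_+(x_i')=\Sigma_+(x_i)$ (coming from the interval structure of the fibers $(\Sigma_+)^{-1}(y)$). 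This extension step is where the one-dimensionality is really used and is not recoverable from your first-order-condition heuristic.
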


As an application of the above properties of the singular dynamics, we immediately obtain 
 that the singularities of any discrete weak K.A.M. solution can be globally propagated with a rotation number, which answers a question of Wei Cheng \cite{Wei21}.
\begin{theorem}\label{thm:packaged}
 Let $f$ be a twist map of the annulus and $c\in \mathbb{R}$ be some cohomology class of $\mathbb{T}$. Let $u_c: \mathbb{R}\rightarrow\mathbb{R}$ be a discrete weak K.A.M. solution for $f$ at cohomology class $c$.
Let $x_0 \in \operatorname{Sing}(u_c)$ and one can define the one-sided sequence $\{x_n\}_{n\in\mathbb{N}}$ iteratively by:
\[
x_{n+1} :=  \mathop{\arg\max}_{y\in \mathbb{R}} \{u_c(y) - S_c(x_n, y) \}  \qquad \text{ for any } n \in \mathbb{N}.
\]
Then, we have
   \begin{itemize}
   \item [(a)] the whole one-sided sequence $\{x_n\}_{n\in\mathbb{N}} \subset \operatorname{Sing}(u_c)$;
  \item [(b)]  the limit $\rho_c:=\Lim{n\to+\infty}\frac{x_n-x_0}{n}$ exists and is independent of $x_0$;
  \item [(c)]  $\rho_c$ equals to the rotation number $\alpha'(c)$ of the associated Aubry-Mather set.
  \end{itemize}
\end{theorem}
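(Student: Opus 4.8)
The plan is to derive Theorem~\ref{thm:packaged} as a straightforward corollary of Theorem~\ref{thm:main}, since the latter already packages all the essential work. First I would observe that the one-sided sequence $\{x_n\}_{n\in\mathbb{N}}$ in the statement is precisely the orbit of $x_0$ under the map $\Sigma_+$, i.e.\ $x_n = \Sigma_+^n(x_0)$; this is well-defined and unambiguous exactly because Theorem~\ref{thm:main} asserts $\Sigma_+$ is single-valued. Part (a) is then immediate: by Theorem~\ref{thm:main}(iii), $\Sigma_+$ maps $\operatorname{Sing}(u_c)$ into itself, so since $x_0 \in \operatorname{Sing}(u_c)$ an induction on $n$ gives $x_n = \Sigma_+(x_{n-1}) \in \operatorname{Sing}(u_c)$ for all $n$.

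For part (b), I would identify $\rho_c = \rho(\Sigma_+)$ as defined in Theorem~\ref{thm:main}(iv): writing $x_n - x_0 = \Sigma_+^n(x_0) - x_0$, the limit $\lim_{n\to+\infty}\frac{1}{n}(x_n - x_0)$ is exactly $\rho(\Sigma_+)$, which Theorem~\ref{thm:main}(iv) guarantees exists and is independent of the base point. Hence $\rho_c$ exists and does not depend on the choice of $x_0 \in \operatorname{Sing}(u_c)$ (nor, in fact, on $x_0 \in \mathbb{R}$). Part (c) is then the content of the last sentence of Theorem~\ref{thm:main}(iv), namely $\rho(\Sigma_+) = \alpha'(c)$, combined with the fact recalled in the introduction that $\alpha'(c)$ is the rotation number of the Aubry-Mather set at cohomology class $c$ (this identification is standard in Aubry-Mather theory, so I would simply cite it, e.g.\ via the Mather $\alpha$-function discussion above and \cite{Golebook}).

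In short, there is no genuine obstacle left at this stage: Theorem~\ref{thm:packaged} is a repackaging of Theorem~\ref{thm:main} in the language of iterated sequences, and the only things to check are the bookkeeping translation $x_n = \Sigma_+^n(x_0)$ and the matching of the two notions of rotation number. The substance — single-valuedness of $\Sigma_+$, the forward invariance of $\operatorname{Sing}(u_c)$, existence of the rotation number and its equality with $\alpha'(c)$ — all lives in the proof of Theorem~\ref{thm:main}, which we may invoke freely. I would therefore present the proof in three short clearly-labelled paragraphs corresponding to (a), (b), (c), each one sentence deducing the claim from the appropriate item of Theorem~\ref{thm:main}.
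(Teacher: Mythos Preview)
Your proposal is correct and matches the paper's own treatment: the paper states that Theorem~\ref{thm:packaged} ``follows immediately'' from Theorem~\ref{thm:main}, and your translation $x_n=\Sigma_+^n(x_0)$ together with the itemwise invocation of Theorem~\ref{thm:main}(iii)--(iv) is exactly the intended deduction.
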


Note that  item (a) of Theorem~\ref{thm:packaged} could be regarded as a discrete version of \cite[Lemma 3.2]{CannarsaC} (see also \cite{Zhang20}). To the best of our knowledge, items (b),(c) are new in the literature of twist maps.

As an immediate application, we now investigate the relation between the number of the singular set $\operatorname{Sing}(u_c)$ of a given discrete weak K.A.M. solution $u_c$  and the arithmetic property of the associated rotation number.

If $\operatorname{Sing}(u_c)\ne\emptyset$, one can pick 
  $x_0\in\operatorname{Sing}(u_c)$ and define iteratively $x_{n+1}:=\Sigma_+(x_{n}), \ \forall n\in\mathbb{N}$. In addition, if  $\operatorname{Sing}(u_c)/\mathbb{Z}$ is a finite set, we know that there exist two different natural numbers $n<m$ such that $x_m-x_n\in\mathbb{Z}$. Then we obtain that $\rho_c=(x_m-x_n)/(m-n)\in\mathbb{Q}$. Hence $\alpha'(c)$ also belongs to $\mathbb{Q}$.
  
  On the other hand, if $\alpha'(c) \in\mathbb{R}\setminus\mathbb{Q}$, due to the fact $\operatorname{Sing}(u_c)/\mathbb{Z}$ is at most countable (see \cite[Proposition 4.1.3]{Cannarsabook}), we conclude
  
\begin{corollary}
Let $f$ be a twist map of the annulus and $c\in \mathbb{R}$ be some cohomology class of $\mathbb{T}$. Let $u_c: \mathbb{R}\rightarrow\mathbb{R}$ be a discrete weak K.A.M. solution for $f$ at cohomology class $c$.
  \begin{itemize}
  \item [(1)] If $\operatorname{Sing}(u_c)/\mathbb{Z}$ is not empty and has finite elements, then $\alpha'(c)\in\mathbb{Q}$.
  \smallskip
  \item [(2)]  If $\alpha'(c) \in\mathbb{R}\setminus\mathbb{Q}$, then $\operatorname{Sing}(u_c)/\mathbb{Z}$ is either empty or infinitely countable. 
  \end{itemize}
\end{corollary}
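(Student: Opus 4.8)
The plan is to deduce both statements directly from Theorem~\ref{thm:packaged} and Theorem~\ref{thm:main}, together with the classical fact that the singular set of a one-dimensional semiconcave function is at most countable. For item (1), assume $\operatorname{Sing}(u_c)/\mathbb{Z}$ is nonempty and finite. Fix $x_0\in\operatorname{Sing}(u_c)$ and define $x_{n+1}:=\Sigma_+(x_n)$. By Theorem~\ref{thm:packaged}(a) the whole orbit $\{x_n\}_{n\in\mathbb{N}}$ lies in $\operatorname{Sing}(u_c)$, so its image in $\mathbb{R}/\mathbb{Z}$ takes only finitely many values; the pigeonhole principle then yields $n<m$ with $x_m-x_n=:k\in\mathbb{Z}$.

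The next step is to promote this single coincidence to genuine periodicity modulo $\mathbb{Z}$. Since $\Sigma_+$ commutes with integer translations by Theorem~\ref{thm:main}(i), iterating the relation $x_m=x_n+k$ gives $x_{n+\ell(m-n)}=x_n+\ell k$ for every $\ell\in\mathbb{N}$. Dividing by $n+\ell(m-n)$ and letting $\ell\to+\infty$ produces the subsequential limit $k/(m-n)$; but the full limit $\rho_c$ exists by Theorem~\ref{thm:packaged}(b), hence $\rho_c=k/(m-n)\in\mathbb{Q}$. Finally, Theorem~\ref{thm:packaged}(c) identifies $\rho_c$ with $\alpha'(c)$, so $\alpha'(c)\in\mathbb{Q}$, which is (1).

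For item (2), I would argue by contraposition from (1): if $\alpha'(c)\notin\mathbb{Q}$, then $\operatorname{Sing}(u_c)/\mathbb{Z}$ cannot be simultaneously nonempty and finite. On the other hand, $u_c$ is semiconcave, so $\operatorname{Sing}(u_c)$ is at most countable (e.g. \cite[Proposition 4.1.3]{Cannarsabook}), and hence so is $\operatorname{Sing}(u_c)/\mathbb{Z}$. Combining the two observations, when $\alpha'(c)$ is irrational the set $\operatorname{Sing}(u_c)/\mathbb{Z}$ is either empty or infinitely countable.

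The only genuinely delicate point is the passage from the integer coincidence $x_m-x_n\in\mathbb{Z}$ to the exact value of the rotation number: one must invoke that $\Sigma_+$ descends to a circle map (Theorem~\ref{thm:main}(i)), so that the orbit becomes exactly periodic on $\mathbb{R}/\mathbb{Z}$ for times $\geq n$, forcing $\rho_c$ to be the visibly rational quantity $k/(m-n)$; the rest is bookkeeping built on results already established in the excerpt.
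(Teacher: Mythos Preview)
Your proof is correct and follows essentially the same approach as the paper: pigeonhole on the finite set $\operatorname{Sing}(u_c)/\mathbb{Z}$ to find $x_m-x_n\in\mathbb{Z}$, then conclude $\rho_c=(x_m-x_n)/(m-n)\in\mathbb{Q}$ and invoke $\rho_c=\alpha'(c)$; item (2) is the contrapositive together with the countability of $\operatorname{Sing}(u_c)$. You spell out more carefully than the paper the passage from the single coincidence to the value of $\rho_c$ via the relation $\Sigma_+(x+1)=\Sigma_+(x)+1$, but this is exactly the implicit step the paper takes for granted.
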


 {
 We can use the singular dynamics $\Sigma_+$ to present a discrete analogue of Patrick Bernard’s regularization theorem \cite{Ber07}. This extends the results of \cite{zavidovique2023}.
\begin{theorem}\label{thm:regularization}
	For any bounded function $u:\mathbb{T}\rightarrow\mathbb{R}$, the function $T^{+}\circ T^{-2}[u]$ is $C^{1,1}$.
\end{theorem}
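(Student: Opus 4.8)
The plan is to exploit the smoothing properties of the Lax--Oleinik operators and the structure of the singular dynamics $\Sigma_+$. First I would recall the standard one-step regularity: for any bounded $u$, the function $T^-[u]$ is semi-concave (being an infimum of the uniformly semi-concave family $y\mapsto u(y)+S_c(y,\cdot)$, using $C^2$-regularity of $S_c$ and the twist condition), and dually $T^+[v]$ is semi-convex for any bounded $v$. Hence $T^{-2}[u]$ is semi-concave, and applying $T^+$ to it yields a semi-convex function. The point of the theorem is that $w:=T^+\circ T^{-2}[u]$ is \emph{simultaneously} semi-concave and semi-convex, which by a classical lemma forces $w\in C^{1,1}$. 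So the core task reduces to proving that $w$ is also semi-concave.

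To get the missing semi-concavity, I would use the fixed-point/calibration structure encoded in the pair $(T^-,T^+)$. The key algebraic fact is the inequality $T^+\circ T^-[v]\le v$ valid for every function $v$ (immediate from the definitions: $T^-[v](x)\le v(y)+S_c(y,x)$ for all $y$, so rearranging gives $v(y)-S_c(x,y)\ge\dots$), together with a matching lower bound at optimal points. Writing $g:=T^-[u]$, which is already semi-concave, I want to compare $w=T^+\circ T^-[g]$ with $g$. At a point $x$, let $y=\Sigma_+(x)$ be the (single-valued, by Theorem~\ref{thm:main}) forward optimizer and let $z$ realize $T^-[g](y)=g(z)+S_c(z,y)$. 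Then $w(x)=g(z)+S_c(z,y)-S_c(x,y)$, and one estimates $w$ from above near $x$ by freezing $z$ and $y$: $w(x')\ge g(z)+S_c(z,y')-S_c(x',y')$ is the wrong direction, so instead I freeze the \emph{inner} optimizer and vary only through the outer sup, which gives a semi-convex lower barrier; for the upper (semi-concave) barrier I freeze $y=\Sigma_+(x)$ and use that $x'\mapsto \sup_{y'}\{T^-[g](y')-S_c(x',y')\}\ge T^-[g](y)-S_c(x',y)$ while also $w(x')\le \big(\text{something}\big)$ — here is where the extra application of $T^-$ matters.

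The role of the \emph{second} $T^-$ is the crux: $T^-[g]$ is not merely semi-concave but its super-differential is controlled by the twist geometry in a way that makes the outer $T^+$ produce two-sided estimates. Concretely, for $g$ semi-concave, $T^-[g](y)=g(z(y))+S_c(z(y),y)$ where $z(y)$ can be chosen monotone in $y$ (twist), and $\partial_y\big(T^-[g]\big)(y)=\partial_2 S_c(z(y),y)$ wherever differentiable; composing with $T^+$ and using the reverse twist inequality one obtains that $w(x')-w(x)$ is pinched between $\partial_1\big(-S_c\big)(x,\Sigma_+(x))\,(x'-x)$ plus a $C\,(x'-x)^2$ error on \emph{both} sides. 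That two-sided quadratic control is precisely $C^{1,1}$. I would organize the write-up as: (1) semi-convexity of $w$ (cheap, one-line from $T^+$); (2) the identity $w(x)=g(z)+S_c(z,\Sigma_+(x))-S_c(x,\Sigma_+(x))$ and the observation that along the orbit the inner minimizer for $T^-[g]$ at $y=\Sigma_+(x)$ is itself the $T^-$-calibrating point, so one more backward step is "free"; (3) the semi-concave upper barrier using this freed step and $C^2$-smoothness of $S_c$; (4) invoke the semiconvex $+$ semiconcave $\Rightarrow C^{1,1}$ lemma.

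The main obstacle I anticipate is step (3): producing a genuine \emph{upper} bound $w(x')\le w(x)+\langle p,x'-x\rangle+C|x'-x|^2$ with $p$ independent of $x'$. Semi-convexity of $w$ is automatic, but semi-concavity does not survive a generic application of $T^+$; one must show it is recovered thanks to the two preliminary $T^-$ steps, which is exactly the discrete shadow of Bernard's argument that $T^+\circ T^-$ (Lagrangian: $T^-_t\circ T^+_t$) regularizes. The technical heart is checking that the inner optimizer $z$ and the intermediate optimizer $\Sigma_+(x)$ depend on $x$ with at worst Lipschitz modulus (again from the twist condition and uniqueness in Theorem~\ref{thm:main}), so that the "frozen" barriers one writes down are valid with a uniform constant $C$ depending only on bounds for $D^2 S_c$ and the twist constant on the relevant compact set, and not on $u$.
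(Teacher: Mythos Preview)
Your proposal correctly identifies that semiconvexity of $w:=T^+\circ T^{-2}[u]$ is automatic and that $C^{1,1}$ is equivalent to simultaneous semiconvexity and semiconcavity. However, step~(3)---producing the semiconcave \emph{upper} barrier---remains a genuine gap, as you yourself acknowledge. You gesture at a ``reverse twist inequality'' and frozen optimizers giving two-sided quadratic control, but no concrete mechanism is supplied for the bound $w(x')\le w(x)+p(x'-x)+C|x'-x|^2$. Since $w$ is a supremum, upper bounds are not obtained by freezing a competitor; and the inequality $T^+\circ T^-[g]\le g$ that you allude to does not transfer semiconcavity from $g$ to $w$, because a function dominated by a semiconcave one need not itself be semiconcave. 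Your sketch never explains what the ``something'' in $w(x')\le(\text{something})$ is.

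The paper's route is different and sidesteps this obstacle entirely. Rather than arguing semiconcavity, it proceeds directly: writing $v:=T^-[u]\in SC(\mathbb{T})$ and $\tilde u:=T^-[v]$, single-valuedness of $\Sigma_+[\tilde u]$ (Lemma~\ref{lemma:singleton}) combined with Lemma~\ref{lemma:WellDefined}(b) shows that $w=T^+[\tilde u]$ is differentiable \emph{everywhere}, with the explicit formula $\d w(x)=-\partial_1 S\bigl(x,\Sigma_+[\tilde u](x)\bigr)$ from Lemma~\ref{lemma:RegularityOfSubaction}(b). Lipschitz continuity of $\Sigma_+[\tilde u]$ (Proposition~\ref{proposition:SigmaLipschitz}) and local Lipschitzness of $\partial_1 S$ (from $F$ locally bi-Lipschitz) then make $\d w$ Lipschitz, i.e.\ $w\in C^{1,1}$. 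Note that the Lipschitz dependence of $\Sigma_+$ on $x$---which you relegate to a closing technicality needed only for ``uniform constants''---is in fact the \emph{entire} analytic content of the proof once the derivative formula is in hand; no separate semiconcavity argument is ever needed.
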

 }

In the next, we will use the singular dynamics $\Sigma_+$
to give a detailed explanation of Arnaud's observation for the connection between pseudo-graph and the Lax-Oleinik semi-group \cite{Arnaud_2011}
 in the case of twist maps (see also \cite{zavidovique2023}).

\begin{theorem}\label{thm:main2}
  Let $f$ be a twist map of the annulus  and $c\in \mathbb{R}$ be some cohomology class of $\mathbb{T}$. Let $u_c: \mathbb{R}\rightarrow\mathbb{R}$ be a discrete weak K.A.M. solution for $f$ at cohomology class $c$. Then the graphs $\operatorname{Graph}(\Sigma_+),\operatorname{Graph}(c+\d T^+[u_c])$ and $\operatorname{Graph}(c+\nabla^+u_c)$ are Lipschitz submanifolds of $\mathbb{R}^2$ associated by the commutative diagram:\begin{center}
    \begin{tikzcd}
      & \operatorname{Graph}(\Sigma_+) \arrow[rdd, "\Gamma^+"] \arrow[ldd, "\Gamma^-"'] &                                 \\
      &                                                                                 &                                 \\
    {\operatorname{Graph}(c+\d T^+[u_c])} \arrow[rr, "F"] &                                                                                 & \operatorname{Graph}(c+\nabla^+u_c)
    \end{tikzcd}
  \end{center}
  where $\Gamma^+(x,y)=(y,\partial_2S(x,y))$, $\Gamma^-(x,y)=(x,-\partial_1S(x,y))$ and $F$ is the lift of $f$ associated to $S$.
\end{theorem}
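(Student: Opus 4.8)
The plan is to build everything out of the optimality relations that define $\Sigma_+$, reading them as identities between one-sided derivatives. Recall from Theorem~\ref{thm:main} that for each $x$ the point $y=\Sigma_+(x)$ is the unique maximizer of $z\mapsto u_c(z)-S_c(x,z)$. First I would record the two first-order conditions attached to this maximization. Differentiating in the $y$-variable at the maximizer gives $\d u_c(y)=\partial_2 S_c(x,y)=\partial_2 S(x,y)-c$; since $u_c$ need only be semi-concave, I would state this more carefully as $c+\nabla^+u_c(y)\ni \partial_2 S(x,y)$, with equality of the relevant one-sided superdifferential at points where the maximizer is pinned, which is exactly the content needed to land in $\operatorname{Graph}(c+\nabla^+u_c)$. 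Symmetrically, viewing $T^+[u_c](x)=u_c(y)-S_c(x,y)$ and using the envelope theorem / the fact that $\Sigma_+$ is the optimal map, one gets $\d T^+[u_c](x)=-\partial_1 S_c(x,y)=-\partial_1 S(x,y)-c$, i.e. $c+\d T^+[u_c](x)=-\partial_1 S(x,y)$, so $(x,\,c+\d T^+[u_c](x))=(x,-\partial_1 S(x,y))=\Gamma^-(x,y)$. This is where the semi-concavity of $T^+[u_c]$ (it is a sup of $C^2$ functions with uniformly bounded second derivatives, hence $C^{1,1}$ on the region swept by maximizers, by the twist condition) is used to guarantee genuine differentiability, so $\d T^+[u_c]$ really is a function and $\Gamma^-$ is well-defined on $\operatorname{Graph}(\Sigma_+)$.

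Next I would verify the diagram commutes. By definition of the lift associated to the generating function $S$, $F(x,p)=(y,q)$ iff $p=-\partial_1 S(x,y)$ and $q=\partial_2 S(x,y)$. Feeding in $p=c+\d T^+[u_c](x)=-\partial_1 S(x,\Sigma_+(x))$ from the previous paragraph, we get $y=\Sigma_+(x)$ and $q=\partial_2 S(x,\Sigma_+(x))=c+\nabla^+u_c(\Sigma_+(x))$ by the other first-order condition. Hence $F\big(\Gamma^-(x,\Sigma_+(x))\big)=(\Sigma_+(x),\,c+\nabla^+u_c(\Sigma_+(x)))=\Gamma^+(x,\Sigma_+(x))$, which is precisely $F\circ\Gamma^-=\Gamma^+$ on $\operatorname{Graph}(\Sigma_+)$. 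So the triangle commutes essentially by unwinding the definition of $F$ together with the two Euler-Lagrange-type equations for the maximizer.

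It remains to show the three graphs are Lipschitz submanifolds of $\mathbb R^2$. For $\operatorname{Graph}(\Sigma_+)$ this is immediate from Theorem~\ref{thm:main}(ii): $\Sigma_+$ is Lipschitz, so its graph is a Lipschitz graph over the first coordinate. For $\operatorname{Graph}(c+\d T^+[u_c])$, I would argue that $T^+[u_c]$ is $C^{1,1}$ (indeed $T^+[u_c]=T^+[T^-[u_c]-\overline S_c]$ up to the constant, so Theorem~\ref{thm:regularization} applies, or directly: $T^+$ of a semi-concave function is semi-convex while $u_c$ semi-concave forces $T^+[u_c]$ semi-concave too, hence $C^{1,1}$), so $\d T^+[u_c]$ is Lipschitz and its graph is a Lipschitz submanifold. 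For $\operatorname{Graph}(c+\nabla^+u_c)$ one cannot appeal to differentiability of $u_c$; instead I would use that this graph is the image of $\operatorname{Graph}(\Sigma_+)$ under the bi-Lipschitz map $\Gamma^+$ (its inverse on the image being $(y,q)\mapsto$ the pair recovered from $F^{-1}$ followed by projection, all of which are $C^1$ and the twist condition makes the relevant Jacobians nondegenerate), and the Lipschitz-submanifold property is preserved under locally bi-Lipschitz homeomorphisms. Equivalently, $\operatorname{Graph}(c+\nabla^+u_c)=F(\operatorname{Graph}(c+\d T^+[u_c]))$ and $F$ is a $C^1$ diffeomorphism, so this graph is $C^1$-diffeomorphic to a Lipschitz submanifold, hence itself one.

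The main obstacle I anticipate is the careful bookkeeping of one-sided derivatives versus genuine derivatives: at a singular point $x_0$ of $u_c$, $u_c$ is not differentiable at $\Sigma_+(x_0)$ (Theorem~\ref{thm:main}(iii)), so the object "$c+\nabla^+ u_c$" must be interpreted as the appropriate selection from the superdifferential — presumably the right derivative, consistent with the \emph{forward} semigroup — and I would need to check that with this convention the maximizer condition $\partial_2 S(x,\Sigma_+(x))=c+\nabla^+u_c(\Sigma_+(x))$ genuinely holds as an equality of real numbers, not just as membership in a set. The twist condition $\partial_{12}S<0$ is the key tool here: it forces the maximizer $y=\Sigma_+(x)$ to move monotonically with $x$, which both pins down the correct one-sided derivative and delivers the Lipschitz bounds; I would isolate this monotonicity argument as the technical heart of the proof and handle the three graph statements as corollaries of it.
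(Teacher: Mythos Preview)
Your overall architecture matches the paper's: derive the two first-order conditions from the maximizer, read off $\Gamma^\pm$, observe that the triangle commutes by the very definition of $F$, and transport Lipschitz regularity from $\operatorname{Graph}(\Sigma_+)$ via bi-Lipschitz maps. The paper does exactly this, citing the regularization result for $C^{1,1}$-ness of $T^+[u_c]$ and the Lipschitz property of $\Sigma_+$.

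There is, however, a genuine gap, and it is precisely where you already feel uneasy. In the paper, $\nabla^+u_c$ is the \emph{full} superdifferential, a set-valued map, and $\operatorname{Graph}(c+\nabla^+u_c)$ is the full pseudo-graph including vertical segments over each singular point. It is \emph{not} a single-valued selection such as the right derivative. Your first-order condition only gives the membership $\partial_2 S(x,\Sigma_+(x))\in c+\nabla^+u_c(\Sigma_+(x))$, which shows $\Gamma^+(\operatorname{Graph}(\Sigma_+))\subset\operatorname{Graph}(c+\nabla^+u_c)$; you never establish the reverse inclusion. Concretely, at a singular point $y$ the set $c+\nabla^+u_c(y)$ is a nondegenerate interval, and you must show that as $x$ ranges over the plateau $(\Sigma_+)^{-1}(y)=[x_-,x_+]$ the values $\partial_2 S(x,y)$ sweep out that entire interval. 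This is the content of the paper's Lemma~\ref{lemma:EquivalentDefinitionOfSingularities}, proved by approximating $x_\pm$ by points whose $\Sigma_+$-images are differentiable points of $u_c$, passing to the limit to land in the set $\nabla^*u_c(y)$ of reachable differentials, invoking semiconcavity to get $\nabla^*u_c(y)\subset\partial\nabla^+u_c(y)$, and finally using that $x\mapsto\partial_2S(x,y)$ is a homeomorphism (ferromagnetism) to fill in the interior of the interval. Your suggestion that monotonicity alone ``pins down the correct one-sided derivative'' does not supply this; you need the limiting argument plus the convexity of $\nabla^+u_c(y)$.
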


\begin{figure}[H]
  \centering

  \tikzset{every picture/.style={line width=0.75pt}} 

  \begin{tikzpicture}[x=0.75pt,y=0.75pt,yscale=-0.8,xscale=0.8]
  
  \draw [color={rgb, 255:red, 245; green, 166; blue, 35 }  ,draw opacity=1 ][line width=2.25]    (547.4,51) -- (547.4,163.6) ;
  \draw [color={rgb, 255:red, 74; green, 144; blue, 226 }  ,draw opacity=1 ][line width=2.25]    (547.4,163.6) .. controls (578.2,157.2) and (601.2,137.2) .. (610,108.2) ;
  \draw [color={rgb, 255:red, 245; green, 166; blue, 35 }  ,draw opacity=1 ][line width=2.25]    (77,114.53) -- (123,114.53) ;
  \draw [color={rgb, 255:red, 74; green, 144; blue, 226 }  ,draw opacity=1 ][line width=2.25]    (123,114.53) .. controls (154.2,91.2) and (154,62.53) .. (167,47.53) ;
  \draw    (485,33.4) -- (485,183) ;
  \draw    (610,33.4) -- (610,183) ;
  \draw    (266,33.4) -- (266,183) ;
  \draw    (391,33.4) -- (391,183) ;
  \draw    (33,46.4) -- (33,181.53) ;
  \draw    (167,47.53) -- (167,181.53) ;
  \draw    (33.95,181.53) -- (167,181.53) ;
  \draw    (32.85,47.53) -- (167,47.53) ;
  \draw [color={rgb, 255:red, 245; green, 166; blue, 35 }  ,draw opacity=1 ][line width=2.25]    (304.45,59.4) .. controls (337.6,95) and (324.6,129.4) .. (350.6,155) ;
  \draw  [dash pattern={on 0.84pt off 2.51pt}]  (33,181.53) -- (167,47.53) ;
  \draw [color={rgb, 255:red, 74; green, 144; blue, 226 }  ,draw opacity=1 ][line width=2.25]    (485,108.2) .. controls (495.2,79.2) and (523.2,52.2) .. (547.6,52.8) ;
  \draw [color={rgb, 255:red, 74; green, 144; blue, 226 }  ,draw opacity=1 ][line width=2.25]    (350.6,155) .. controls (373.2,142.2) and (384.2,131.2) .. (391,108.2) ;
  \draw [color={rgb, 255:red, 74; green, 144; blue, 226 }  ,draw opacity=1 ][line width=2.25]    (266,108.2) .. controls (275.2,81.2) and (292.2,66.2) .. (304.45,59.4) ;
  \draw [color={rgb, 255:red, 74; green, 144; blue, 226 }  ,draw opacity=1 ][line width=2.25]    (78.09,114.62) .. controls (42.2,134.2) and (46.98,166.56) .. (33.95,181.53) ;
  
  \draw (88,26.4) node [anchor=north west][inner sep=0.75pt]    {$\Sigma _{+}$};
  \draw (284,20.4) node [anchor=north west][inner sep=0.75pt]    {$c+\d T^{+}[ u_c]$};
  \draw (514,19.4) node [anchor=north west][inner sep=0.75pt]    {$c+\nabla ^{+} u_c$};

\end{tikzpicture}
  \caption{The graphs $\operatorname{Graph}(\Sigma_+),\operatorname{Graph}(c+\d T^+[u_c])$ and $\operatorname{Graph}(c+\nabla^+u_c)$  at cohomology class $c=0$ in the pendulum case.}
\end{figure}
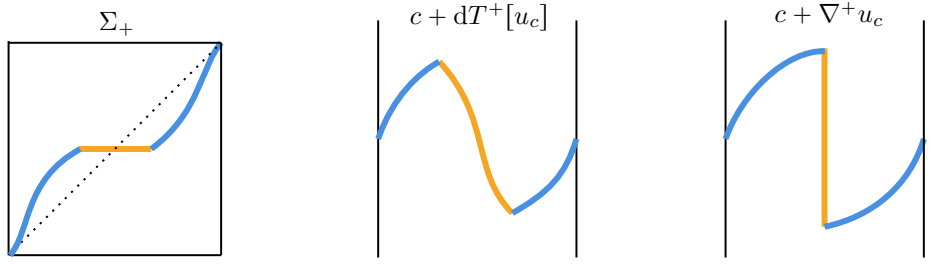

Following \cite{AZ23},
we call $\operatorname{Graph}(c+\nabla^+u_c)$ the \emph{full pseudo-graph} of a weak K.A.M. solution $u_c$ at cohomology class $c$.
If $x\in\operatorname{Sing}(u_c)$, then $\nabla^+u_c(x)$ is not a singleton and we refer to $\{x\}\times( c+\nabla^+u_c(x))$ as a vertical segment of the full pseudo-graph  $\operatorname{Graph}(c+\nabla^+u_c)$. Notice that there is a natural forward map\begin{align*}
  \Sigma_+\times \Sigma_+:\operatorname{Graph}(\Sigma_+)&\rightarrow\operatorname{Graph}(\Sigma_+) \\ 
  (x,\Sigma_+(x))&\mapsto(\Sigma_+(x),\Sigma_+^2(x)).
\end{align*} Hence for any cohomology class $c$, there are two maps \begin{align*}
  \Omega_c: \operatorname{Graph}(c+\d T^+[u_c])&\rightarrow\operatorname{Graph}(c+\d T^+[u_c]),\\ 
  \Lambda_c: \operatorname{Graph}(c+\nabla^+u_c)&\rightarrow\operatorname{Graph}(c+\nabla^+u_c)
\end{align*}
 induced by the diagram:
\begin{center}
  \begin{tikzcd}
    {\operatorname{Graph}(c+\d T^+[u_c])} \arrow[dd, "F"'] \arrow[rrrr, "\Omega_c"] &                                                                                                                                                 &  &                                                                               & {\operatorname{Graph}(c+\d T^+[u_c])} \arrow[dd, "F"] \\
                                                                         & \operatorname{Graph}(\Sigma_+) \arrow[ld, "\Gamma^+"] \arrow[lu, "\Gamma^-"'] \arrow[rr, "{\Sigma_+\times \Sigma_+}"] &  & \operatorname{Graph}(\Sigma_+) \arrow[ru, "\Gamma^-"] \arrow[rd, "\Gamma^+"'] &                                                   \\
    \operatorname{Graph}(c+\nabla^+u_c) \arrow[rrrr, "\Lambda_c"]                    &                                                                                                                                                 &  &                                                                               & \operatorname{Graph}(c+\nabla^+u_c)                  
    \end{tikzcd}
\end{center}
\medskip

  The dynamical systems defined by $\Lambda_c$ and $\Omega_c$ are conjugate and also termed as the singular dynamics on $\operatorname{Graph}(c+\nabla^+u_c)$ and $\operatorname{Graph} (c+\d T^{+}[ u_c])$  respectively (see Figures~\ref{fig:SDond+u} and  \ref{fig:SDondT+u} for an illustration).

  \begin{figure}[H]
  \vspace{2.5em}

    \centering

    \tikzset{every picture/.style={line width=0.75pt}} 
    
    \begin{tikzpicture}[x=0.75pt,y=0.75pt,yscale=-0.8,xscale=0.8]
    
    \draw    (57.4,278.2) -- (619.4,278.2) ;
    \draw [color={rgb, 255:red, 144; green, 19; blue, 254 }  ,draw opacity=1 ][line width=2.25]    (392.4,72.2) -- (392.4,85.2) ;
    \draw [color={rgb, 255:red, 245; green, 166; blue, 35 }  ,draw opacity=1 ][line width=2.25]    (392.4,89.4) -- (392.4,147.53) ;
    \draw [color={rgb, 255:red, 80; green, 227; blue, 194 }  ,draw opacity=1 ][line width=2.25]    (392.4,147.53) -- (392.4,185.2) ;
    \draw [line width=2.25]    (365.8,102.2) .. controls (368.4,88.2) and (379.4,74.2) .. (392.4,72.2) ;
    \draw [color={rgb, 255:red, 74; green, 144; blue, 226 }  ,draw opacity=1 ][line width=2.25]    (392.4,185.2) .. controls (402.8,185.4) and (437.4,174.2) .. (444.4,137.2) ;
    \draw    (101,35.2) -- (101,329.2) ;
    \draw  [dash pattern={on 0.84pt off 2.51pt}]  (392.4,185.2) -- (392.4,278.2) ;
    \draw  [color={rgb, 255:red, 208; green, 2; blue, 27 }  ,draw opacity=1 ][fill={rgb, 255:red, 208; green, 2; blue, 27 }  ,fill opacity=1 ] (388.2,89.4) .. controls (388.2,87.08) and (390.08,85.2) .. (392.4,85.2) .. controls (394.72,85.2) and (396.6,87.08) .. (396.6,89.4) .. controls (396.6,91.72) and (394.72,93.6) .. (392.4,93.6) .. controls (390.08,93.6) and (388.2,91.72) .. (388.2,89.4) -- cycle ;
    \draw  [color={rgb, 255:red, 126; green, 211; blue, 33 }  ,draw opacity=1 ][fill={rgb, 255:red, 126; green, 211; blue, 33 }  ,fill opacity=1 ] (388.2,147.53) .. controls (388.2,145.21) and (390.08,143.33) .. (392.4,143.33) .. controls (394.72,143.33) and (396.6,145.21) .. (396.6,147.53) .. controls (396.6,149.85) and (394.72,151.73) .. (392.4,151.73) .. controls (390.08,151.73) and (388.2,149.85) .. (388.2,147.53) -- cycle ;
    \draw [color={rgb, 255:red, 0; green, 0; blue, 0 }  ,draw opacity=1 ][line width=2.25]    (529.4,72.2) -- (529.4,100.2) ;
    \draw [color={rgb, 255:red, 74; green, 144; blue, 226 }  ,draw opacity=1 ][line width=2.25]    (529.4,96) -- (529.4,157.2) ;
    \draw  [dash pattern={on 0.84pt off 2.51pt}]  (529.4,157.2) -- (529.4,278.2) ;
    \draw  [color={rgb, 255:red, 0; green, 0; blue, 0 }  ,draw opacity=1 ][fill={rgb, 255:red, 255; green, 255; blue, 255 }  ,fill opacity=1 ] (525.2,96) .. controls (525.2,93.68) and (527.08,91.8) .. (529.4,91.8) .. controls (531.72,91.8) and (533.6,93.68) .. (533.6,96) .. controls (533.6,98.32) and (531.72,100.2) .. (529.4,100.2) .. controls (527.08,100.2) and (525.2,98.32) .. (525.2,96) -- cycle ;
    \draw  [dash pattern={on 0.84pt off 2.51pt}]  (365.8,102.2) -- (365.8,278.2) ;
    \draw  [dash pattern={on 0.84pt off 2.51pt}]  (444.4,137.2) -- (444.4,278.2) ;
    \draw    (423.4,52.2) .. controls (456.72,15.94) and (506.37,27.7) .. (522.46,55.48) ;
    \draw [shift={(523.4,57.2)}, rotate = 242.65] [color={rgb, 255:red, 0; green, 0; blue, 0 }  ][line width=0.75]    (10.93,-3.29) .. controls (6.95,-1.4) and (3.31,-0.3) .. (0,0) .. controls (3.31,0.3) and (6.95,1.4) .. (10.93,3.29)   ;
    \draw [color={rgb, 255:red, 126; green, 211; blue, 33 }  ,draw opacity=1 ][fill={rgb, 255:red, 126; green, 211; blue, 33 }  ,fill opacity=1 ][line width=2.25]    (234.4,92) -- (234.4,223.67) ;
    \draw [color={rgb, 255:red, 208; green, 2; blue, 27 }  ,draw opacity=1 ][line width=2.25]    (178,104) -- (178,135.8) ;
    \draw [color={rgb, 255:red, 245; green, 166; blue, 35 }  ,draw opacity=1 ][line width=2.25]    (178,135.8) .. controls (206.4,138) and (203.4,94) .. (234.4,92) ;
    \draw [color={rgb, 255:red, 144; green, 19; blue, 254 }  ,draw opacity=1 ][line width=2.25]    (151.4,134) .. controls (160.4,119) and (165,106) .. (178,104) ;
    \draw [color={rgb, 255:red, 80; green, 227; blue, 194 }  ,draw opacity=1 ][line width=2.25]    (234.4,223.67) .. controls (244.8,223.87) and (264.8,197.67) .. (268.8,187.67) ;
    \draw  [dash pattern={on 0.84pt off 2.51pt}]  (151.4,134) -- (151.4,275.4) ;
    \draw  [dash pattern={on 0.84pt off 2.51pt}]  (268.8,187.67) -- (268.8,276.73) ;
    \draw [color={rgb, 255:red, 74; green, 144; blue, 226 }  ,draw opacity=1 ][line width=2.25]    (268.8,187.67) .. controls (272.8,178.67) and (283.8,182.67) .. (289.8,172.67) ;
    \draw [line width=2.25]    (131.4,157) .. controls (134,142) and (148.4,142) .. (151.4,134) ;
    \draw    (264.73,70.87) .. controls (298.05,34.61) and (347.7,46.37) .. (363.79,74.15) ;
    \draw [shift={(364.73,75.87)}, rotate = 242.65] [color={rgb, 255:red, 0; green, 0; blue, 0 }  ][line width=0.75]    (10.93,-3.29) .. controls (6.95,-1.4) and (3.31,-0.3) .. (0,0) .. controls (3.31,0.3) and (6.95,1.4) .. (10.93,3.29)   ;
    
    \draw (394.4,281.6) node [anchor=north west][inner sep=0.75pt]    {$y$};
    \draw (137.4,279.6) node [anchor=north west][inner sep=0.75pt]    {$x_{-}( y)$};
    \draw (258,280.4) node [anchor=north west][inner sep=0.75pt]    {$x_{+}( y)$};
    \draw (510,281.4) node [anchor=north west][inner sep=0.75pt]    {$\Sigma _{+}( y)$};
    \draw (116,13.4) node [anchor=north west][inner sep=0.75pt]    {$\operatorname{Graph(} c+\nabla ^{+} u_c)$};
    \draw (317.33,281.4) node [anchor=north west][inner sep=0.75pt]  [font=\normalsize]  {$x_{-}( \Sigma _{+}( y))$};
    \draw (411.67,280.73) node [anchor=north west][inner sep=0.75pt]  [font=\normalsize]  {$x_{+}( \Sigma _{+}( y))$};
    \draw (465,11.4) node [anchor=north west][inner sep=0.75pt]    {$\Lambda_c$};
    \draw (306.33,30.07) node [anchor=north west][inner sep=0.75pt]    {$\Lambda_c$};

    \end{tikzpicture}
    \caption{Singular dynamics on the full pseudo-graph $\operatorname{Graph}(c+\nabla^+u_c)$ of a discrete weak K.A.M. solution $u_c$ at cohomology class $c$, where different colors are used to represent the corresponding pieces before and after the map $\Lambda_c$.}
    \label{fig:SDond+u}
    \end{figure}
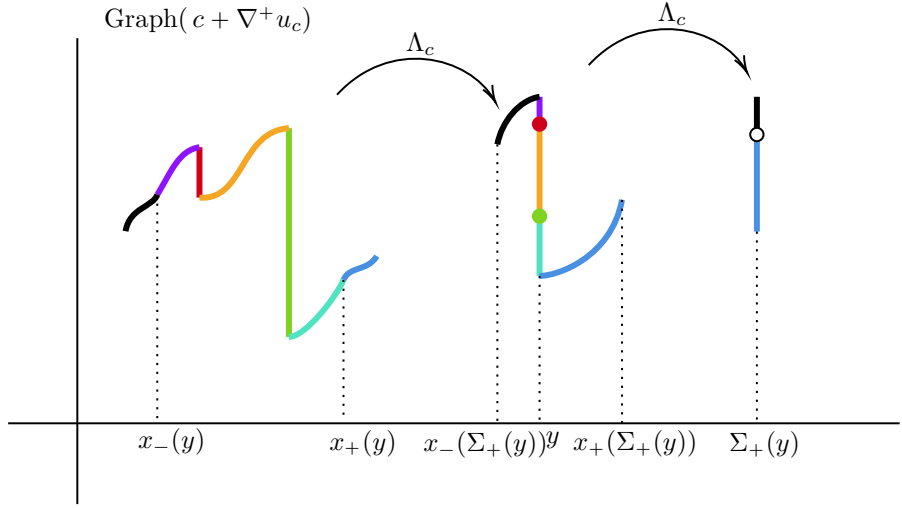

    \begin{figure}[H]
      
      \centering

      \tikzset{every picture/.style={line width=0.75pt}} 
    
      \begin{tikzpicture}[x=0.75pt,y=0.75pt,yscale=-0.8,xscale=0.8]
      
      \draw [color={rgb, 255:red, 245; green, 166; blue, 35 }  ,draw opacity=1 ][line width=2.25]    (407.77,130.51) .. controls (422.6,137) and (427.6,161) .. (433.6,165) .. controls (439.6,169) and (443.6,142) .. (463.13,173) ;
      \draw [line width=2.25]    (549.4,142.47) .. controls (568.4,138.47) and (566.4,160.93) .. (576.4,166.13) ;
      \draw [color={rgb, 255:red, 74; green, 144; blue, 226 }  ,draw opacity=1 ][line width=2.25]    (576.4,166.13) .. controls (595.4,197.73) and (618.4,143.87) .. (628.4,180.67) ;
      \draw  [dash pattern={on 0.84pt off 2.51pt}]  (576.4,166) -- (576.4,291.6) ;
      \draw    (55.4,291) -- (617.4,291) ;
      \draw    (99,48) -- (99,342) ;
      \draw    (444.4,112) .. controls (477.72,75.74) and (527.37,87.5) .. (543.46,115.28) ;
      \draw [shift={(544.4,117)}, rotate = 242.65] [color={rgb, 255:red, 0; green, 0; blue, 0 }  ][line width=0.75]    (10.93,-3.29) .. controls (6.95,-1.4) and (3.31,-0.3) .. (0,0) .. controls (3.31,0.3) and (6.95,1.4) .. (10.93,3.29)   ;
      \draw  [color={rgb, 255:red, 0; green, 0; blue, 0 }  ,draw opacity=1 ][fill={rgb, 255:red, 255; green, 255; blue, 255 }  ,fill opacity=1 ] (572.2,166) .. controls (572.2,163.68) and (574.08,161.8) .. (576.4,161.8) .. controls (578.72,161.8) and (580.6,163.68) .. (580.6,166) .. controls (580.6,168.32) and (578.72,170.2) .. (576.4,170.2) .. controls (574.08,170.2) and (572.2,168.32) .. (572.2,166) -- cycle ;
      \draw [color={rgb, 255:red, 144; green, 19; blue, 254 }  ,draw opacity=1 ][line width=2.25]    (131.4,202.4) .. controls (140.4,198.4) and (142.4,163.4) .. (163.4,176.8) ;
      \draw [color={rgb, 255:red, 245; green, 166; blue, 35 }  ,draw opacity=1 ][line width=2.25]    (182.13,204.8) .. controls (207.47,211.47) and (201.13,157.8) .. (219.13,156.8) ;
      \draw [color={rgb, 255:red, 80; green, 227; blue, 194 }  ,draw opacity=1 ][line width=2.25]    (261.4,218.8) .. controls (275.4,231.8) and (284.4,231.8) .. (302.4,215.8) ;
      \draw [line width=2.25]    (107.01,221.4) .. controls (122.4,212.4) and (117.4,205.4) .. (131.4,202.4) ;
      \draw [color={rgb, 255:red, 74; green, 144; blue, 226 }  ,draw opacity=1 ][line width=2.25]    (302.4,215.8) .. controls (315.4,189.8) and (328.4,196.8) .. (333.4,181) ;
      \draw [color={rgb, 255:red, 208; green, 2; blue, 27 }  ,draw opacity=1 ][line width=2.25]    (163.4,176.8) .. controls (165.4,192) and (170.13,200.8) .. (182.13,204.8) ;
      \draw [color={rgb, 255:red, 126; green, 211; blue, 33 }  ,draw opacity=1 ][line width=2.25]    (219.13,156.8) .. controls (230.13,153.8) and (252.4,208.8) .. (261.4,218.8) ;
      \draw  [dash pattern={on 0.84pt off 2.51pt}]  (131.4,202.4) -- (131.4,292.53) ;
      \draw  [dash pattern={on 0.84pt off 2.51pt}]  (302.4,215.8) -- (302.4,291.2) ;
      \draw   (132,290) .. controls (132,294.67) and (134.33,297) .. (139,297) -- (207.07,297) .. controls (213.74,297) and (217.07,299.33) .. (217.07,304) .. controls (217.07,299.33) and (220.4,297) .. (227.07,297)(224.07,297) -- (295.13,297) .. controls (299.8,297) and (302.13,294.67) .. (302.13,290) ;
      \draw    (247.47,134.4) .. controls (255.39,106.68) and (301.2,84.19) .. (348.31,113.43) ;
      \draw [shift={(349.73,114.33)}, rotate = 212.74] [color={rgb, 255:red, 0; green, 0; blue, 0 }  ][line width=0.75]    (10.93,-3.29) .. controls (6.95,-1.4) and (3.31,-0.3) .. (0,0) .. controls (3.31,0.3) and (6.95,1.4) .. (10.93,3.29)   ;
      \draw [color={rgb, 255:red, 80; green, 227; blue, 194 }  ,draw opacity=1 ][line width=2.25]    (465.24,175.28) .. controls (477.24,191.54) and (479.87,208.6) .. (498.13,190.13) ;
      \draw  [color={rgb, 255:red, 126; green, 211; blue, 33 }  ,draw opacity=1 ][fill={rgb, 255:red, 126; green, 211; blue, 33 }  ,fill opacity=1 ] (461.43,177.04) .. controls (460.45,174.94) and (461.36,172.44) .. (463.47,171.47) .. controls (465.57,170.49) and (468.07,171.4) .. (469.04,173.51) .. controls (470.02,175.61) and (469.11,178.11) .. (467,179.09) .. controls (464.9,180.06) and (462.4,179.15) .. (461.43,177.04) -- cycle ;
      \draw [line width=2.25]    (360.46,160.07) .. controls (363.06,145.07) and (377.46,145.07) .. (380.46,137.07) ;
      \draw [color={rgb, 255:red, 74; green, 144; blue, 226 }  ,draw opacity=1 ][line width=2.25]    (498.13,190.13) .. controls (502.13,181.13) and (513.13,185.13) .. (519.13,175.13) ;
      \draw  [dash pattern={on 0.84pt off 2.51pt}]  (380.46,137.07) -- (380.46,290.13) ;
      \draw  [dash pattern={on 0.84pt off 2.51pt}]  (498.13,190.13) -- (498.13,290.13) ;
      \draw [color={rgb, 255:red, 144; green, 19; blue, 254 }  ,draw opacity=1 ][line width=2.25]    (380.12,138.6) .. controls (394.6,125.2) and (390.6,118) .. (407.77,130.51) ;
      \draw  [color={rgb, 255:red, 208; green, 2; blue, 27 }  ,draw opacity=1 ][fill={rgb, 255:red, 208; green, 2; blue, 27 }  ,fill opacity=1 ] (402.96,131.28) .. controls (401.98,129.17) and (402.9,126.68) .. (405,125.7) .. controls (407.1,124.72) and (409.6,125.64) .. (410.58,127.74) .. controls (411.55,129.85) and (410.64,132.34) .. (408.54,133.32) .. controls (406.43,134.3) and (403.94,133.38) .. (402.96,131.28) -- cycle ;
      
      \draw (114,26.2) node [anchor=north west][inner sep=0.75pt]    {$\operatorname{Graph(} c+\d T^{+}[ u_c])$};
      \draw (486,71.2) node [anchor=north west][inner sep=0.75pt]    {$\Omega_c$};
      \draw (570,295.4) node [anchor=north west][inner sep=0.75pt]    {$y$};
      \draw (361.4,293.53) node [anchor=north west][inner sep=0.75pt]    {$x_{-}( y)$};
      \draw (486.4,293.8) node [anchor=north west][inner sep=0.75pt]    {$x_{+}( y)$};
      \draw (184.67,306.4) node [anchor=north west][inner sep=0.75pt]    {$( \Sigma _{+})^{-2}( y)$};
      \draw (282,79.2) node [anchor=north west][inner sep=0.75pt]    {$\Omega_c$};

      \end{tikzpicture}
      \caption{Singular dynamics on the $\operatorname{Graph} (c+\d T^{+}[ u_c])$,  where different colors are used to represent the corresponding pieces before and after the map $\Omega_c$.}
      \label{fig:SDondT+u}
    \end{figure}
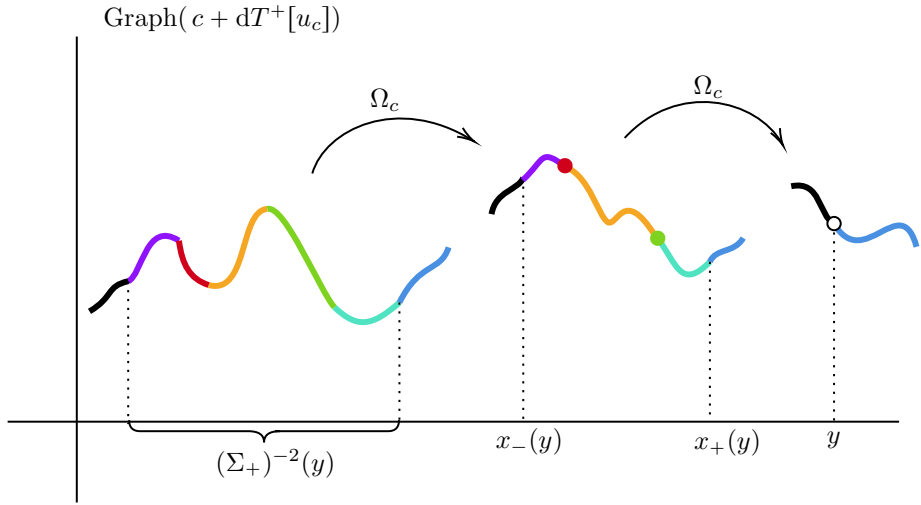

Consequently, one could analyze the dynamics $\Lambda_c$ on the full pseudo-graphs $\operatorname{Graph}(c+\nabla^+u_c)$ and has the following theorem.
\begin{theorem}\label{thm:SingularDynamics}
  Let $f$ be a twist map of the annulus  and $c\in \mathbb{R}$ be some cohomology class of $\mathbb{T}$. Let $u_c: \mathbb{R}\rightarrow\mathbb{R}$ be a discrete weak K.A.M. solution for $f$ at cohomology class $c$. Then the map   \begin{align*}\Lambda_c:\operatorname{Graph}(c+\nabla^+u_c)&\rightarrow\operatorname{Graph}(c+\nabla^+u_c)\\ 
  (x,p)&\mapsto (y,\partial_2 S(x,y))\end{align*} is well-defined and implicitly given by \begin{align*}
    T^+[u_c](x)=u_c(y)-S_c(x,y).
  \end{align*}
  Moreover,  the map $\Lambda_c$ satisfies the following properties: 
 \begin{enumerate}
    \item [(i)] $\Lambda_c(x+1,p)=\Lambda_c(x,p)+(1,0)$;
    \item [(ii)] $\Lambda_c$ is  Lipschitz continuous;
    \item [(iii)] $\Lambda_c$ preserves the orientation of the full pseudo-graph.
    \item [(iv)] the image of a vertical segment under $\Lambda_c$ is a point that belongs to a vertical segment.
    \item [(v)] $\Lambda_c$ admits a rotation number for positive iterations, that is, 
    \begin{align*}
      \rho(\Lambda_c):=\lim_{n\to+\infty}\frac{1}{n}\left( \pi_1\circ\Lambda_c^n(x,p)-x \right)
    \end{align*}
  exists for all $(x,p)\in\operatorname{Graph}(c+\nabla^+u_c)$ and is independent of $(x,p)$. In fact, $\rho(\Lambda_c) = \alpha'(c)$.
  \end{enumerate}
\end{theorem}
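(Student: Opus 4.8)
The plan is to realize $\Lambda_c$ as the transport, through the homeomorphism $\Gamma^+$ of Theorem~\ref{thm:main2}, of the scalar dynamics $\Sigma_+$ acting on the first coordinate, and then to deduce (i)--(v) one by one from the corresponding properties of $\Sigma_+$ furnished by Theorem~\ref{thm:main}.

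First I would set up the identifications. Since $\Sigma_+$ is single-valued and $T^+[u_c](x)=u_c(\Sigma_+(x))-S_c(x,\Sigma_+(x))$ by the very definition of $T^+$, the point $y=\Sigma_+(x)$ is the unique solution of $T^+[u_c](x)=u_c(y)-S_c(x,y)$, which is the asserted implicit characterization. Writing $g:y\mapsto u_c(y)-S_c(x,y)$, which is semiconcave, first-order optimality at the maximizer $y^*=\Sigma_+(x)$ gives $0\in\nabla^+g(y^*)=\nabla^+u_c(y^*)-\partial_2S_c(x,y^*)$, hence $\partial_2S(x,y^*)\in c+\nabla^+u_c(y^*)$, so $(\Sigma_+(x),\partial_2S(x,\Sigma_+(x)))\in\operatorname{Graph}(c+\nabla^+u_c)$. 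Using from Theorem~\ref{thm:main2} that $\Gamma^+$ maps $\operatorname{Graph}(\Sigma_+)$ onto $\operatorname{Graph}(c+\nabla^+u_c)$ and that the defining diagram commutes, I would unwind the diagram: for $(x,p)=\Gamma^+(x_0,\Sigma_+(x_0))=(\Sigma_+(x_0),\partial_2S(x_0,\Sigma_+(x_0)))$ one gets
\[
\Lambda_c(x,p)=\Gamma^+\!\bigl(\Sigma_+(x_0),\Sigma_+^2(x_0)\bigr)=\bigl(\Sigma_+(x),\,\partial_2S(x,\Sigma_+(x))\bigr),
\]
where I used $\Sigma_+^2(x_0)=\Sigma_+(\Sigma_+(x_0))=\Sigma_+(x)$. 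Since the right-hand side depends only on $x$ (not on the chosen preimage $x_0$, hence not on $p$), the map $\Lambda_c$ is well-defined, equal to $(x,p)\mapsto(y,\partial_2S(x,y))$ with $y=\Sigma_+(x)$; and in the parametrization of $\operatorname{Graph}(c+\nabla^+u_c)$ by $x_0\in\mathbb{R}$ coming from $\Gamma^+$, the map $\Lambda_c$ is nothing but $x_0\mapsto\Sigma_+(x_0)$ on the parameter.

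Each property then follows by translation. For (i) one combines $\Sigma_+(x+1)=\Sigma_+(x)+1$ with the periodicity $S(x+1,y+1)=S(x,y)$ of the generating function, which gives $\partial_2S(x+1,\Sigma_+(x)+1)=\partial_2S(x,\Sigma_+(x))$. For (ii), $x\mapsto\Sigma_+(x)$ is Lipschitz by Theorem~\ref{thm:main}(ii) and $\partial_2S$ is $C^1$, so, working on a compact fundamental domain by $1$-periodicity, $\Lambda_c=\Gamma^+\circ(\Sigma_+\times\Sigma_+)\circ(\Gamma^+)^{-1}$ is a composition of Lipschitz maps; here $(\Gamma^+)^{-1}$ is Lipschitz by Theorem~\ref{thm:main2} (ultimately because the strict twist makes $x_0\mapsto\partial_2S(x_0,y)$ uniformly strictly monotone, so $\Gamma^+$ stays injective on $\operatorname{Graph}(\Sigma_+)$ even across the flat parts of $\Sigma_+$, with Lipschitz inverse). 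For (iii), in the $x_0$-parametrization $\Lambda_c$ acts as the non-decreasing degree-one map $\Sigma_+$, which preserves the linear order of the parameter and hence the orientation of the pseudo-graph. For (iv), a vertical segment $\{x\}\times(c+\nabla^+u_c(x))$ occurs exactly when $x\in\operatorname{Sing}(u_c)$ and, under $\Gamma^+$, is the image of a nondegenerate parameter interval $\{x_0:\Sigma_+(x_0)=x\}$; by the computation above $\Lambda_c$ sends every such $x_0$ to the single point $(\Sigma_+(x),\partial_2S(x,\Sigma_+(x)))$, and since $\Sigma_+$ propagates singularities (Theorem~\ref{thm:main}(iii)) this point lies on the vertical segment over $\Sigma_+(x)\in\operatorname{Sing}(u_c)$. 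For (v), $\pi_1\circ\Lambda_c^n(x,p)=\Sigma_+^n(x)$ for every $n\ge1$, so $\rho(\Lambda_c)=\lim_{n\to+\infty}\frac1n(\Sigma_+^n(x)-x)=\rho(\Sigma_+)=\alpha'(c)$ by Theorem~\ref{thm:main}(iv), and the limit exists and is independent of $(x,p)$.

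The part that needs genuine care, rather than new ideas, is the identification above: verifying that $\Gamma^+$ is a bona fide bi-Lipschitz homeomorphism from $\operatorname{Graph}(\Sigma_+)$ onto the full pseudo-graph --- in particular that it is injective on the intervals where $\Sigma_+$ is constant, which it collapses precisely onto the vertical segments --- and that all the Lipschitz bounds can be taken uniform. Both points rest on the uniform strict twist estimate together with $1$-periodicity and are already contained in Theorem~\ref{thm:main2}; once they are granted, properties (i)--(v) are immediate consequences of Theorem~\ref{thm:main}.
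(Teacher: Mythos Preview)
Your proposal is correct and follows essentially the same route as the paper: compute $\Lambda_c$ as the conjugate of $\Sigma_+\times\Sigma_+$ by $\Gamma^+$, obtain the explicit formula $\Lambda_c(x,p)=(\Sigma_+(x),\partial_2 S(x,\Sigma_+(x)))$, and then read off (i)--(v) from the corresponding properties of $\Sigma_+$ in Theorem~\ref{thm:main}. The paper organizes this as a general lemma for $u=T^-[v]$ with $v\in SC(\mathbb{T})$ (using the inverse formula $(\Sigma_+[u])^{-1}(y)=\pi_1\circ F^{-1}(y,\nabla^+u(y))$ to unwind $(\Gamma^+)^{-1}$) and then specializes to weak K.A.M.\ solutions, while you work directly with $u_c$ and parameterize by the preimage $x_0$; these are minor presentational differences, not a different strategy.
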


Heuristically, from Figure~\ref{fig:SDond+u}, one could  observe that some points are attracted to the singularities of $u_c$ under the singular dynamics $\Lambda_c$. However, on the other hand, it is also evident that some other points
   do not propagate into the singularities  during the iteration of $\Lambda_c$. Thus, one could conclude:
 {
\begin{theorem}\label{thm:main3}
  Let $f$ be a twist map of the annulus  and $c\in \mathbb{R}$ be some cohomology class of $\mathbb{T}$. For any  discrete weak K.A.M. solution $u_c: \mathbb{R}\rightarrow\mathbb{R}$ for $f$ at cohomology class $c$, the $\alpha$-limit set of $\operatorname{Graph}(c+\d u_c)$ under $\Lambda_c$: $$\alpha(\operatorname{Graph}(c+\d u_c)):=\bigcap_{n\in\mathbb{Z}_+}\overline{\{\Lambda^i_c(x,p)\mid i\leq -n, (x,p)\in \operatorname{Graph}(c+\d u_c)\}}$$
  is closed, non-empty and invariant under the dynamics $F$. Furthermore, the Aubry set at cohomology class $c$ can be expressed as $$\mathcal{A}^*_c=\bigcap_{u_c}\alpha(\operatorname{Graph}(c+\d u_c))-(0,c),$$ where the intersection is taken over all discrete weak K.A.M. solutions $u_c$ at  cohomology class $c$. 
  \end{theorem}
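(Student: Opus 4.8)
\emph{Setting up the reductions.} I would begin from the structure already available. By Theorem~\ref{thm:SingularDynamics} the map $\Lambda_c$ depends only on its first coordinate, $\Lambda_c(x,p)=(\Sigma_+(x),\partial_2 S(x,\Sigma_+(x)))$, and by Theorem~\ref{thm:main2} the maps $\Gamma^{\pm}$ are homeomorphisms identifying $\Lambda_c$, $\Omega_c$ and the shift $\sigma=\Sigma_+\times\Sigma_+$ on $\operatorname{Graph}(\Sigma_+)$, with $F\circ\Gamma^-=\Gamma^+$. The first observation is that, since singularities propagate forward (Theorem~\ref{thm:main}(iii)), $\operatorname{Sing}(u_c)$ is $\Sigma_+$-forward invariant, so
\begin{equation*}
\bigcup_{m\ge n}\Lambda_c^{-m}\big(\operatorname{Graph}(c+\d u_c)\big)=\Lambda_c^{-n}\big(\operatorname{Graph}(c+\d u_c)\big)=\big\{(x,p)\in\operatorname{Graph}(c+\nabla^+u_c):\ \Sigma_+^{\,n}(x)\notin\operatorname{Sing}(u_c)\big\},
\end{equation*}
a \emph{decreasing} family, whence $\alpha(\operatorname{Graph}(c+\d u_c))=\bigcap_{n}\overline{\Lambda_c^{-n}(\operatorname{Graph}(c+\d u_c))}$. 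All these sets are $(1,0)$-periodic and, since $u_c$ is Lipschitz, the pseudo-graph descends to a compact subset of $(\mathbb{R}/\mathbb{Z})\times\mathbb{R}$ on which $\Lambda_c$ is continuous; I work there. Closedness of $\alpha(\operatorname{Graph}(c+\d u_c))$ is then immediate, and for non-emptiness I would pick a regular point $x_0\notin\operatorname{Sing}(u_c)$ (possible, $\operatorname{Sing}(u_c)$ being countable) and, using that $\Sigma_+$ is a surjective degree-one circle map, build a backward orbit $x_{-1},\dots,x_{-n}$; then $(x_{-n},p)$ lies in $\Lambda_c^{-n}(\operatorname{Graph}(c+\d u_c))$ for any admissible $p$ (over a regular point $y$ one automatically has $\partial_2 S(\cdot,y)=c+\d u_c(y)$, since $0\in\nabla^+(u_c-S_c(\cdot,y))$ forces $\partial_2 S(x,y)\in c+\nabla^+u_c(y)$). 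Thus each set in the nested intersection is non-empty, and compactness (finite-intersection property) gives $\alpha(\operatorname{Graph}(c+\d u_c))\neq\emptyset$.

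\emph{Strong $\Lambda_c$-invariance and invariance under $F$.} I would next check that $\Lambda_c$ is onto $\operatorname{Graph}(c+\nabla^+u_c)$: the fibre $\Sigma_+^{-1}(y)$ is a closed interval $[x_-(y),x_+(y)]$ and, by the twist condition, $x\mapsto\partial_2 S(x,y)$ is strictly monotone and maps this interval exactly onto $c+\nabla^+u_c(y)$ — precisely the geometry in Figure~\ref{fig:SDond+u}. From $\Lambda_c^{-1}\circ\Lambda_c^{-n}=\Lambda_c^{-(n+1)}$ together with surjectivity one gets $\Lambda_c(A_{n+1})=A_n$ for $A_n:=\overline{\Lambda_c^{-n}(\operatorname{Graph}(c+\d u_c))}$, and a diagonal-subsequence argument in the compact quotient yields $\Lambda_c(\alpha)=\alpha$. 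In particular every point of $\alpha$ sits on a bi-infinite $\Sigma_+$-orbit $\{x_n\}_{n\in\mathbb{Z}}$ whose consecutive pairs correspond to points of $\widehat\alpha:=(\Gamma^+)^{-1}(\alpha)$, a $\sigma$-invariant set. The key point is then that these orbits are $u_c$-\emph{calibrated}. Indeed $x_{n+1}=\Sigma_+(x_n)$ gives $u_c(x_{n+1})-S_c(x_n,x_{n+1})=T^+[u_c](x_n)\le u_c(x_n)-\overline{S}_c$ (the inequality is the domination property of $u_c$), and $\sum_{n=j}^{k-1}\big(S_c(x_n,x_{n+1})-\overline{S}_c\big)\ge u_c(x_k)-u_c(x_j)$ is then a uniform lower bound; since $\alpha$ arises as a decreasing limit of closures of backward iterates of the regular (hence calibrated) part of the graph and is $\Lambda_c$-invariant, a strict inequality at any step would force an unbounded drift of the bounded quantity $u_c(x_n)$ along the orbit — impossible. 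Hence $u_c(x_{n+1})-u_c(x_n)=S_c(x_n,x_{n+1})-\overline{S}_c$ for all $n$, so $\{x_n\}$ is a minimal configuration and obeys the discrete Euler–Lagrange relation $\partial_2 S(x_{n-1},x_n)=-\partial_1 S(x_n,x_{n+1})$. This identity says exactly that $\Gamma^+(x_{n-1},x_n)=(x_n,\partial_2 S(x_{n-1},x_n))=(x_n,-\partial_1 S(x_n,x_{n+1}))=\Gamma^-(x_n,x_{n+1})$, i.e.\ $\Gamma^-(\widehat\alpha)=\Gamma^+(\widehat\alpha)=\alpha$ as subsets of $\mathbb{R}^2$; combined with $F(\Gamma^-(\widehat\alpha))=\Gamma^+(\widehat\alpha)$ (from $F\circ\Gamma^-=\Gamma^+$) this yields $F(\alpha)=\alpha$.

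\emph{Identification with the Aubry set.} For the inclusion $\mathcal{A}^*_c+(0,c)\subseteq\bigcap_{u_c}\alpha(\operatorname{Graph}(c+\d u_c))$: each configuration of the Aubry–Mather set $\mathcal{M}_c$ is $u_c$-backward-calibrated for \emph{every} weak K.A.M.\ solution $u_c$, hence $\Sigma_+$ maps it to itself (argue as in the proof of Theorem~\ref{thm:main}) and it avoids $\operatorname{Sing}(u_c)$; so $\{(x,c+\d u_c(x)):x\in\mathcal{M}_c\}=\mathcal{A}^*_c+(0,c)$ is contained in $\bigcap_n\Lambda_c^{-n}(\operatorname{Graph}(c+\d u_c))\subseteq\alpha(\operatorname{Graph}(c+\d u_c))$ for each $u_c$. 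For the reverse inclusion, observe that $\alpha(\operatorname{Graph}(c+\d u_c))\subseteq\operatorname{Graph}(c+\nabla^+u_c)$ (the full pseudo-graph being closed), so $\bigcap_{u_c}\alpha(\operatorname{Graph}(c+\d u_c))\subseteq\bigcap_{u_c}\operatorname{Graph}(c+\nabla^+u_c)$; the latter is the common part of all full pseudo-graphs of weak K.A.M.\ solutions, which is precisely $\mathcal{A}^*_c+(0,c)$. (If this last equality is not already available in the discrete setting, I would prove it along the same lines as in Fathi's theory: any point lying over a gap of $\mathcal{M}_c$ at which all weak K.A.M.\ solutions share a common superdifferential value would produce a bi-infinite calibrated configuration for every $u_c$, forcing that point into $\mathcal{M}_c$; here one uses that over a gap the weak K.A.M.\ solutions form a family obtained by adding a free constant to one of two barrier branches, so their superdifferentials genuinely ``fan out''.) Combining the two inclusions gives the stated formula for $\mathcal{A}^*_c$.

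\emph{Where the difficulty lies.} The soft parts — closedness, non-emptiness, $\Lambda_c(\alpha)=\alpha$, and the ``$\supseteq$'' half of the Aubry-set identity — are routine once the reductions above are in place. The main obstacle is the step upgrading $\Lambda_c$-invariance to genuine $F$-invariance: it hinges on showing that the bi-infinite orbits carried by $\alpha$ are actually $u_c$-calibrated (and not merely $\Sigma_+$-invariant), i.e.\ that $\alpha$ lies in the lift of the Mañé set of $u_c$. Making the ``no strict step in the limit'' argument fully rigorous requires a careful analysis of how the calibration/domination inequalities interact with the decreasing intersection defining $\alpha$, in particular with the flat pieces of $\Sigma_+$ over the gaps of $\mathcal{M}_c$; and, as noted, one may additionally need to establish in the discrete twist setting that $\bigcap_{u_c}\operatorname{Graph}(c+\nabla^+u_c)=\mathcal{A}^*_c+(0,c)$.
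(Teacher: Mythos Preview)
Your setup and the ``soft'' parts are fine, but the route you take diverges from the paper's and contains a genuine gap at precisely the point you flag.

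The paper does \emph{not} argue $F$-invariance by showing directly that the bi-infinite $\Sigma_+$-orbits carried by $\alpha$ are calibrated. Instead it \emph{identifies} the $\alpha$-limit set explicitly: it proves
\[
\{(x,\d u(x)):x\in\mathcal{O}^{n+1}[u]\}\ \subset\ \Lambda^{-n}(\operatorname{Graph}(\d u))\ \subset\ \{(x,\d u(x)):x\in\mathcal{O}^{n}[u]\},
\]
so that $\alpha(\operatorname{Graph}(\d u))=\{(x,\d u(x)):x\in\mathcal{O}^\infty[u]\}=\mathcal{O}^*[u]$. Then $F$-invariance is the already-established Proposition~\ref{prop:invariant}, and the Aubry formula is the already-established Proposition~\ref{prop:defofAubry}. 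The key lemma driving the right-hand inclusion is Lemma~\ref{lemma:SingularityAndNSu}/Proposition~\ref{prop:SingularityAndGraph}: if $y\notin\operatorname{Sing}(u)$ then $(\Sigma_+)^{-1}(y)$ is a singleton and equals $\Sigma_-[u](y)$, so a chain $x,\Sigma_+(x),\dots,\Sigma_+^n(x)$ with regular endpoint is a $\Sigma_-$-chain, i.e.\ calibrated, whence $x\in\mathcal{O}^n[u]$.

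Your ``drift'' argument does not do this job. The inequality $T^+[u_c](x_n)\le u_c(x_n)-\overline{S}_c$ holds along \emph{every} forward $\Sigma_+$-orbit, and a single strict step produces only a one-time defect, not an unbounded drift of the bounded quantity $u_c(x_n)$; so nothing in that argument distinguishes orbits in $\alpha$ from arbitrary bi-infinite $\Sigma_+$-orbits (which certainly exist through singular points and are not calibrated). The parenthetical ``regular (hence calibrated)'' is where the real work hides: making that implication precise requires exactly the singleton-preimage lemma above, and once you use it you are computing $\mathcal{O}^n[u]$ and are back on the paper's track. Likewise, your reverse inclusion for the Aubry identity rests on the unproven claim $\bigcap_{u_c}\operatorname{Graph}(c+\nabla^+u_c)=\mathcal{A}^*_c+(0,c)$; the paper bypasses this entirely because, after the identification $\alpha=\mathcal{O}^*[u]$, the intersection over weak K.A.M.\ solutions is immediately $\bigcap_u\mathcal{O}^\infty[u]$, which is the projected Aubry set by Proposition~\ref{prop:defofAubry}.
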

  
  We can extend some classical results \cite[Proposition 11.2.5]{katok1995introduction} for orientation-preserving  circle homeomorphism to orientation-preserving continuous  circle map of degree $1$, without requiring invertibility. Note that $\Sigma_+$ is precisely such a map. Consequently, if  $\alpha'(c) \in\mathbb{R}\setminus\mathbb{Q}$, the $\alpha$-limit set under $\Sigma_+$ is  either $\mathbb{R}$ or a Cantor set. The same conclusion holds for the $\alpha$-limit set under $\Lambda_c$, since $\Sigma_+\circ \pi_1=\pi_1\circ \Lambda_c$. Furthermore, a theorem due to Mather \cite{mather1990differentiability} and Bangert \cite{bangert1988mather} guarantees the uniqueness of discrete weak K.A.M. solutions $u_c$, up to constants, when $\alpha'(c) \in\mathbb{R}\setminus\mathbb{Q}$. Thus, we conclude:
  \begin{corollary}
Let $f$ be a twist map of the annulus and $c\in \mathbb{R}$ be some cohomology class of $\mathbb{T}$. If $\alpha'(c) \in\mathbb{R}\setminus\mathbb{Q}$, then one of the following holds:
  \begin{itemize}
  \item [(1)]  $\mathcal{A}^*_c$ is a Lipschitz invariant circle.
  \item [(2)]  $\mathcal{A}^*_c$ is a Lipschitz Cantori.
    \end{itemize}
\end{corollary}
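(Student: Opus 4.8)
The plan is to obtain the corollary by feeding the irrationality hypothesis $\alpha'(c)\in\mathbb{R}\setminus\mathbb{Q}$ into Theorem~\ref{thm:main3}, after first collapsing the intersection appearing there to a single term. The starting point is the Mather--Bangert uniqueness theorem \cite{mather1990differentiability, bangert1988mather}: when $\alpha'(c)\notin\mathbb{Q}$ the discrete weak K.A.M. solution $u_c$ is unique up to an additive constant. Since adding a constant changes neither $\d u_c$ nor $\nabla^+ u_c$, it changes neither $\operatorname{Graph}(c+\d u_c)$ nor the map $\Lambda_c$; hence the intersection over all $u_c$ in Theorem~\ref{thm:main3} is taken over a single set, and $\mathcal{A}^*_c = \alpha(\operatorname{Graph}(c+\d u_c))-(0,c)$ for any such $u_c$. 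In particular $\mathcal{A}^*_c$ is already known to be closed, non-empty and $F$-invariant by Theorem~\ref{thm:main3}; what remains is to pin down its topological type.

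Next I would push the dynamics down to the base circle. The diagram in Theorem~\ref{thm:main2} gives $\pi_1\circ\Lambda_c=\Sigma_+\circ\pi_1$, so $\pi_1$ maps $\alpha(\operatorname{Graph}(c+\d u_c))$ onto an $\alpha$-limit set $K\subset\mathbb{R}/\mathbb{Z}$ of the circle map induced by $\Sigma_+$. By Theorem~\ref{thm:main}, $\Sigma_+$ is a non-decreasing Lipschitz lift of an orientation-preserving degree-one circle map with $\rho(\Sigma_+)=\alpha'(c)\notin\mathbb{Q}$. Invoking the extension of the Poincar\'e classification \cite[Proposition 11.2.5]{katok1995introduction} to (not necessarily invertible) orientation-preserving continuous degree-one circle maps announced just before the corollary---where such a map still carries a unique minimal set $E$ on which it is semiconjugate to the irrational rotation $R_{\alpha'(c)}$, and where the relevant $\alpha$-limit sets coincide with $E$---one concludes that $K$ is either all of $\mathbb{R}/\mathbb{Z}$ or a Cantor set.

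Finally I would lift the dichotomy back to $\mathbb{R}^2$ using the Lipschitz-submanifold structure of Theorem~\ref{thm:main2}: over a base point not in $\operatorname{Sing}(u_c)$ the full pseudo-graph reduces to a single point (one Lipschitz arc locally), while over a singular point it is a vertical segment, and $\Lambda_c$ preserves orientation and sends vertical segments into vertical segments (Theorem~\ref{thm:SingularDynamics}). Since $\Sigma_+$ propagates singularities forward (Theorem~\ref{thm:main}(iii)), a backward orbit of a differentiable point never meets $\operatorname{Sing}(u_c)$; hence $\alpha(\operatorname{Graph}(c+\d u_c))$ meets each fiber over $K$ in exactly one point and is a Lipschitz graph over $K$. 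If $K=\mathbb{R}/\mathbb{Z}$, this is the graph of a Lipschitz function over $\mathbb{R}/\mathbb{Z}$ which, being $F$-invariant, is a Lipschitz invariant circle after the translation by $-(0,c)$, giving case~(1); if $K$ is a Cantor set, $\alpha(\operatorname{Graph}(c+\d u_c))$ is a Lipschitz graph over a Cantor set carrying dynamics semiconjugate to $R_{\alpha'(c)}$, i.e.\ a Lipschitz Cantori, giving case~(2). The step I expect to be most delicate is the non-invertible circle-map classification: one must verify that an orientation-preserving degree-one circle map that is only monotone (not a homeomorphism), with irrational rotation number, nevertheless admits a unique minimal set obeying the homeomorphism dichotomy and that this minimal set is the common $\alpha$-limit set; this is exactly the extension of \cite[Proposition 11.2.5]{katok1995introduction} stated above the corollary, which I would establish there and merely quote here.
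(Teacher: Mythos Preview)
Your proposal is correct and follows essentially the same route as the paper: Mather--Bangert uniqueness to collapse the intersection in Theorem~\ref{thm:main3}, the semiconjugacy $\pi_1\circ\Lambda_c=\Sigma_+\circ\pi_1$, and the extension of \cite[Proposition 11.2.5]{katok1995introduction} to non-invertible orientation-preserving degree-one circle maps with irrational rotation number. You are in fact more explicit than the paper about why the resulting set is a \emph{Lipschitz} graph over $K$ (the paper leaves this implicit in the identification $\alpha(\operatorname{Graph}(\d u))=\{(x,\d u(x)):x\in\mathcal{O}^\infty[u]\}$ together with Lemma~\ref{lemma:visC11}).
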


 }
 
 \subsection{Notations and Hypotheses}
 \begin{notation}
  \begin{itemize}
      \item $\mathbb{T}^d=\mathbb{R}^d/\mathbb{Z}^d$ is the $d$-dimensional torus and its cotangent bundle $T^*\mathbb{T}^d=\mathbb{T}^d\times (\mathbb{R}^d)^*$ is the $2d$-dimensional annulus. The points of the annulus are denoted by $(\theta,p)\in \mathbb{T}^d\times (\mathbb{R}^d)^*$ and the points of its universal cover are denoted by $(x,p)\in\mathbb{R}^d\times (\mathbb{R}^d)^*$.
      
      \item Let $S(x_1, \dots, x_{2d})$ be a $C^1$ real-valued function defined on $\mathbb{R}^{2d}$. We denote by $\partial_1 S$ and $\partial_2 S$ the $(\mathbb{R}^d)^*$-valued functions $(\partial S/\partial x_1, \dots, \partial S/\partial x_d)$ and $(\partial S/\partial x_{d+1}, \dots, \partial S/\partial x_{2d})$  respectively.
      \item   For the product space $X_1\times \cdots\times X_n$ and $k\in\{1,\dots,n\}$, the map $\pi_k:X_1\times \cdots\times X_n\rightarrow X_k$ is the projection on the $k$-th variable.
      \item Let $\mathcal{G}:X\rightarrow 2^{Y}$ be a set-valued map, we define its graph as a subset of $X\times Y$: \begin{align*}
        \operatorname{Graph}(\mathcal{G}):=\{(x,y)\in X\times Y\mid y\in\mathcal{G}(x)\}.
      \end{align*} 
      For example, if  $u:\mathbb{T}^d\rightarrow\mathbb{R}$ is semiconcave, we will denote by $\operatorname{Graph}(\d u)$ the partial graph of $\d u$:\begin{align*}
        \operatorname{Graph}(\d u)=\{(x,y)\in\mathbb{R}^d\times (\mathbb{R}^d)^*\mid u \text{ is differentiable at $x$ and }y=\d u(x)\}.
      \end{align*}
      And we will denote by $\operatorname{Graph}(\nabla^+ u)$ the full pseudo-graph of $u$:\begin{align*}
        \operatorname{Graph}(\nabla^+ u)=\{(x,y)\in\mathbb{R}^d\times (\mathbb{R}^d)^*\mid y\in \nabla^+u(x)\}.
      \end{align*}
      \item We denote by $B(\mathbb{T}^d)$, $C^0(\mathbb{T}^d)$ and $SC(\mathbb{T}^d)$
        the spaces of bounded $\mathbb{Z}^d$-periodic functions, continuous $\mathbb{Z}^d$-periodic functions and semiconcave $\mathbb{Z}^d$-periodic functions. 
        \item A mapping is of class $C^{1,1}$ if it is differentiable with uniformly Lipschitz derivative.       
  \end{itemize}
  \end{notation}    
  
For the sake of slight generality, we use

\begin{hypothesis}\label{hypothesis:S1}
  The function $S:\mathbb{R}^{d}\times\mathbb{R}^d\rightarrow\mathbb{R}$ satisfies the following properties:  \begin{enumerate}
  	\item $S$ is continuous;
    \item  $S$ is coercive: $$\lim_{\|x-y\|\rightarrow+\infty} S(x,y)=+\infty;$$
    \item  $S$ is diagonal-periodic:\begin{align*}
      \forall (x,y)\in\mathbb{R}^{d}\times\mathbb{R}^{d},m\in\mathbb{Z}^d \quad S(x+m,y+m)=S(x,y);
    \end{align*}
    \end{enumerate}
\end{hypothesis}
\begin{hypothesis}\label{hypothesis:S2}
  The function $S:\mathbb{R}^{d}\times\mathbb{R}^d\rightarrow\mathbb{R}$ satisfies the following properties:  \begin{enumerate}
    \item  $S$ is locally semiconcave;
    \item  $S$ is ferromagnetic; that is, $S$ is of class $C^1$ and 
     the two maps $
        x \mapsto \partial_2S(x, y) $
        and $ y \mapsto \partial_1S(x, y)$
    are homeomorphisms between $\mathbb{R}^d$ and $(\mathbb{R}^d)^*$ for all $ (x, y)$.
  \end{enumerate}
\end{hypothesis}
\begin{hypothesis}\label{hypothesis:S3}
 The dimension $d=1$ and the Aubry \& Le Daeron Non-crossing Lemma \cite{aubry1983discrete} holds: if $(x_1-x_2)(y_1-y_2)<0$, then $S(x_1,y_2)+S(x_2,y_1)<S(x_1,y_1)+S(x_2,y_2)$.
\end{hypothesis}

\begin{remark}\label{remark:F}
 One can refer the notion of ferromagnetic terminology in \cite[Definition 2.4]{GT11}. If $S$ satisfies Hypothesis \ref{hypothesis:S2}, then the discrete standard map $
    F:\mathbb{R}^d\times (\mathbb{R}^d)^*\righttoleftarrow$ can be defined by \begin{align*}
    F(x,p)=(y,p') \Longleftrightarrow\begin{cases}
      p=-\partial_1S(x,y)\\ 
      p'=\partial_2S(x,y)
    \end{cases}.
  \end{align*}
  The map $F$ is a homeomorphism and can be seen as a discrete version of the Hamiltonian flow.
\end{remark}

 \subsection{Plan for this paper}
 
 {
 This paper is organized as follows. In Section~\ref{section:DWKT}, we first provide several fundamental facts about discrete weak K.A.M. theory on tori of any dimension.
 
In Section~\ref{subsection:Aubry}, we discuss an important invariant set in discrete weak K.A.M. theory—the Aubry set. In Section~\ref{subsection:Regularity}, we address some regularity results in discrete weak K.A.M. theory, particularly those related to semiconcave functions. The conclusions in Section 2.1 require Hypothesis 1, while those in Section 2.2 require both Hypothesis 1 and Hypothesis 2.

Theorems~\ref{thm:main}-\ref{thm:packaged} and Theorems~\ref{thm:regularization}-\ref{thm:main3} are proved in Sections~\ref{sec:Theorem1} and \ref{sec:consequences}, respectively. The conclusions in Sections 3 and 4 require the validity of Hypothesis 1, Hypothesis 2, and Hypothesis 3. 

In Appendix~\ref{sec:appendixA}, we list some fundamental properties of semiconcave functions without proof. Appendix~\ref{sec:appendixB} reviews key results for classical exact twist maps. The proofs of Lemmas~\ref{lemma:RegularityOfSubaction} and \ref{lemma:WellDefined} are included in Appendix~\ref{sec:appendixC} for self-containedness. 
 }

\section{Discrete weak K.A.M. theory on $d$-dimensional torus}\label{section:DWKT}
\subsection{A notion of important invariant set}\label{subsection:Aubry} As standard, we introduce the following invariant sets and investigate several properties of them. In Section \ref{subsection:Aubry} , we operate under the assumption that the function  $S$  satisfies Hypothesis \ref{hypothesis:S1}.

We recall the \emph{discrete backward Lax-Oleinik operator} and  \emph{discrete forward Lax-Oleinik operator} defined in the introduction: 

\begin{definition}
	The \emph{discrete backward Lax-Oleinik operator} $T^-$ and \emph{discrete forward Lax-Oleinik operator} $T^+$ are defined by
\begin{align*}
  \forall y\in\mathbb{R}^d,\quad T^- [u] (y) := &\inf_{x \in \mathbb{R}^d} \big\{ u(x) + S (x, y) \big\},\\ 
  \forall x\in\mathbb{R}^d,\quad T^+ [u] (x) := &\sup_{y \in \mathbb{R}^d} \big\{ u(y) - S (x, y) \big\},
\end{align*}
for every bounded $\mathbb{Z}^d$-periodic function (not necessarily continuous) $u:\mathbb{R}^d\rightarrow\mathbb{R}$ (namely $u\in B(\mathbb{T}^d)$). \end{definition}

\begin{remark}\label{remark:LaxOleinik}
  \begin{enumerate}
    \item [(a)] 
    The image $T^-(B(\mathbb{T}^d))$ of $T^-$ is equicontinuous and has uniformly bounded oscillation, as shown in \cite[Lemma 4.2]{GT11}.
    \item [(b)] We can derive directly from the definition that \begin{align*}
  \forall y\in\mathbb{R}^d,\quad T^{-n} [u] (y) = &\inf_{\substack{(y_{-n},\dots,y_{-1}) \in (\mathbb{R}^d)^n\\ y_0=y}} \left\{ v(y_{-n}) + \sum_{i={-n}}^{-1}S (y_{i}, y_{i+1}) \right\},\\ 
  \forall x\in\mathbb{R}^d,\quad T^{+n} [u] (x) = &\sup_{\substack{(x_1,\dots,x_n) \in (\mathbb{R}^d)^n\\ x_0=x}} \left\{ u(x_n) - \sum_{i=0}^{n-1}S (x_i, x_{i+1}) \right\}.
\end{align*} 
  \end{enumerate}
\end{remark}

\begin{definition}Let $u\in C^0(\mathbb{T}^d)$ and $n\in\mathbb{Z}_+$. \begin{enumerate}
\item The \emph{optimal backward map} is the closed multi-valued map \begin{align*}
    \Sigma_-[u]:\mathbb{R}^d&\rightarrow 2^{\mathbb{R}^d}\\ 
    y&\mapsto \mathop{\arg\min}_{x\in\mathbb{R}^d}\{u(x)+S(x,y)\};
  \end{align*}
  \item the \emph{optimal forward map} is the closed multi-valued map \begin{align*}
    \Sigma_+[u]:\mathbb{R}^d&\rightarrow2^{\mathbb{R}^d}\\ 
    x&\mapsto \mathop{\arg\max}_{y\in\mathbb{R}^d}\{u(y)-S(x,y)\};
  \end{align*}
  \item The \emph{  $n$-step optimal backward map} is the multi-valued map \begin{align*}
    \Sigma_-^n[u]:\mathbb{R}^d&\rightarrow2^{\mathbb{R}^d}\\ 
    y&\mapsto \pi_1\left( \mathop{\arg\min}_{\substack{(y_{-n},\dots,y_{-1}) \in (\mathbb{R}^d)^n\\ y_0=y}}\left\{ u(y_{-n}) + \sum_{i={-n}}^{-1}S (y_{i}, y_{i+1}) \right\}\right);
  \end{align*}
  \item the \emph{ $n$-step optimal forward map} is the multi-valued map \begin{align*}
    \Sigma_+^n[u]:\mathbb{R}^d&\rightarrow2^{\mathbb{R}^d}\\ 
    x&\mapsto \pi_n\left(\mathop{\arg\max}_{\substack{(x_1,\dots,x_n) \in (\mathbb{R}^d)^n\\ x_0=x}}\left\{ u(x_n) - \sum_{i=0}^{n-1}S (x_i, x_{i+1}) \right\}\right).
  \end{align*}
\end{enumerate} 
  
\end{definition}

\begin{remark}\label{remark:coercive}
\begin{enumerate}
	\item [(a)] Given $u\in C^0(\mathbb{T}^d)$, as $S$ is continuous and coercive (Hypothesis \ref{hypothesis:S1}), for any $y\in\mathbb{R}^d$, we can find $x\in\mathbb{R}^d$ that attains the infimum in the definition of $T^-[u](y)$. Similarly, for any $x\in\mathbb{R}^d$, we can find $y\in\mathbb{R}^d$ that attains the supremum in the definition of $T^+[u](x)$. In other words, $\Sigma_-[u](y)\ne\emptyset, \forall y\in\mathbb{R}^d$ and $\Sigma_+[u](x)\ne\emptyset, \forall x\in\mathbb{R}^d$. 
	\item [(b)] We can derive directly from the definition and Remark \ref{remark:LaxOleinik} (b) that for any $u\in C^0(\mathbb{T}^d)$, $m\in\mathbb{Z}_+$, $n\in\mathbb{Z}_+$ and $y\in\mathbb{R}^d$, $$\Sigma_-^m[u]\circ\Sigma_-^n[T^{-m}[u]](y)=\Sigma_-^{m+n}[u](y).$$
\end{enumerate}
	\end{remark}

 \begin{lemma}\label{lemma:Sigma-andSigma+}
 	Let $u=T^{-n}[v]$, $v\in C^0(\mathbb{T}^d)$ and $x,y\in\mathbb{R}^d$. The following two statements are equivalent: \begin{enumerate}
 		\item [(i)]$x\in\Sigma_-^n[v](y)$;
 		\item [(ii)]$y\in \Sigma_+^n[u](x)$ and $T^{+n}\circ T^{-n}[v](x)=v(x).$
 	\end{enumerate} \end{lemma}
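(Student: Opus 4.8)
The plan is to prove both implications by unpacking the definitions of the $n$-step optimal maps in terms of minimizing/maximizing configurations, and by relating the two iterated operators $T^{-n}$ and $T^{+n}$ through a common action sum. The starting observation is that, with $u = T^{-n}[v]$, for any $x\in\mathbb{R}^d$ one has
\[
u(x) = T^{-n}[v](x) = \inf_{\substack{(x_{-n},\dots,x_{-1})\in(\mathbb{R}^d)^n\\ x_0 = x}} \Bigl\{ v(x_{-n}) + \sum_{i=-n}^{-1} S(x_i,x_{i+1}) \Bigr\},
\]
and that, by Remark~\ref{remark:coercive}(a) applied $n$ times, the infimum is attained. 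I would fix $x,y$ and a minimizing chain $x = x_0, x_{-1},\dots,x_{-n}$ realizing $u(x)$, and similarly handle configurations realizing $T^{+n}[u](x)$.

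First I would prove (i) $\Rightarrow$ (ii). Suppose $x \in \Sigma_-^n[v](y)$, so there is a chain $y = y_0, y_{-1},\dots,y_{-n}$ with $y_{-n} = x$ minimizing $v(y_{-n}) + \sum_{i=-n}^{-1} S(y_i,y_{i+1})$; in particular $u(y) = v(x) + \sum_{i=-n}^{-1} S(y_i,y_{i+1})$. Reading this chain forward from $x$ to $y$ gives a candidate configuration for the supremum defining $T^{+n}[u](x)$, which shows
\[
T^{+n}[u](x) \ge u(y) - \sum_{i=-n}^{-1} S(y_i,y_{i+1}) = v(x).
\]
For the reverse inequality $T^{+n}[u](x) \le v(x)$: take any chain $x = z_0, z_1,\dots,z_n$; since $u(z_n) = T^{-n}[v](z_n) \le v(z_0') + \sum S$ over any backward chain from $z_n$, choosing the backward chain to be exactly $z_n, z_{n-1},\dots,z_0 = x$ gives $u(z_n) \le v(x) + \sum_{i=0}^{n-1} S(z_i,z_{i+1})$, hence $u(z_n) - \sum_{i=0}^{n-1} S(z_i,z_{i+1}) \le v(x)$; taking the sup over chains yields $T^{+n}[u](x) \le v(x)$. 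Combining, $T^{+n}\circ T^{-n}[v](x) = v(x)$, and the chain $x = y_{-n},\dots,y_0 = y$ achieves equality in the sup, so $y \in \Sigma_+^n[u](x)$ (here one uses that $\pi_n$ of the arg-max picks out the endpoint).

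Next, (ii) $\Rightarrow$ (i). Assume $T^{+n}\circ T^{-n}[v](x) = v(x)$ and $y\in\Sigma_+^n[u](x)$, so there is a chain $x = w_0, w_1,\dots,w_n = y$ with $v(x) = T^{+n}[u](x) = u(y) - \sum_{i=0}^{n-1} S(w_i,w_{i+1})$, i.e. $u(y) = v(x) + \sum_{i=0}^{n-1} S(w_i,w_{i+1})$. Reading this backward, $w_n = y, w_{n-1},\dots,w_0 = x$ is a backward chain from $y$ ending at $x$, and its action value $v(x) + \sum S(w_i,w_{i+1})$ equals $u(y)$, which is the infimum defining $T^{-n}[v](y) = u(y)$; hence this chain is a minimizer, so its endpoint $x$ lies in $\Sigma_-^n[v](y)$, i.e. (i) holds. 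The only genuinely delicate point — and the step I would be most careful about — is the interplay between the two one-sided inequalities used to derive $T^{+n}\circ T^{-n}[v](x) = v(x)$ versus the extraction of the arg-min/arg-max endpoints: one must check that a chain realizing equality in one variational problem is simultaneously optimal in the other, which is exactly where the hypothesis $T^{+n}\circ T^{-n}[v](x)=v(x)$ is indispensable in the direction (ii) $\Rightarrow$ (i) (without it, a maximizing forward chain for $T^{+n}[u](x)$ need not reconstitute a minimizing backward chain for $T^{-n}[v](y)$). For $n=1$ everything is transparent; for general $n$ I would either iterate the $n=1$ case using Remark~\ref{remark:coercive}(b) (and its forward analogue), or just argue directly with length-$n$ chains as sketched above, the latter being cleaner.
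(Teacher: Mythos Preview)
Your proposal is correct and follows essentially the same approach as the paper's proof: both directions are handled by taking an optimal length-$n$ chain for one variational problem and showing it is simultaneously optimal for the other, using the identity $u(y)=v(x)+\sum S$ and the general inequality $T^{+n}\circ T^{-n}[v]\le v$. The only cosmetic difference is that in (i) $\Rightarrow$ (ii) you split off the two inequalities $T^{+n}[u](x)\ge v(x)$ and $T^{+n}[u](x)\le v(x)$ separately, whereas the paper compares the minimizing chain directly against an arbitrary forward chain from $x$ in one step; the content is identical.
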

 
 \begin{proof}
 	(i) $\Rightarrow$ (ii): Suppose that $x\in\Sigma_-^n[v](y)$.	Using the definition of $T^-$, by $u=T^{-n}[v]$, we have \begin{equation}\label{eq:Sigma-andSigma+}
 		u(y)=\inf_{\substack{(y_{-n},\dots,y_{-1}) \in (\mathbb{R}^d)^n\\ y_0=y}} \left\{ v(y_{-n}) + \sum_{i={-n}}^{-1}S (y_{i}, y_{i+1}) \right\}=v(x)+\sum_{i={-n}}^{-1}S(y_i,y_{i+1})
 	\end{equation} where $y_{-n}=x,y_0=y$ and for any $(z_{-n+1},\dots,z_{0}=z) \in (\mathbb{R}^d)^n$, $$u(z)=\inf_{\substack{(z_{-n},\dots,z_{-1}) \in (\mathbb{R}^d)^n\\ z_0=z}} \left\{ v(z_{-n}) + \sum_{i={-n}}^{-1}S (z_{i}, z_{i+1}) \right\}\leq v(x)+\sum_{i={-n}}^{-1}S(z_i,z_{i+1}),$$
 	where $z_{-n}=x, z_{0}=z$.
	By combining these equation and inequality and eliminating $v(x)$, we obtain $$u(y)-\sum_{i={-n}}^{-1}S(y_i,y_{i+1})\geq u(z)-\sum_{i={-n}}^{-1}S(z_i,z_{i+1}),\quad \forall (z_{-n+1},\dots,z_{0}=z) \in (\mathbb{R}^d)^n,$$ which implies  $T^{+n}[u](x)=u(y)-\sum_{i={-n}}^{-1}S(y_i,y_{i+1})$ and $y\in \Sigma_+^n[u](x)$. Notice that the equation \eqref{eq:Sigma-andSigma+} says $u(y)-\sum_{i={-n}}^{-1}S(y_i,y_{i+1})=v(x)$. Hence $T^{+n}[u](x)=v(x)$.
	
	(ii) $\Rightarrow$ (i): Suppose that $y\in \Sigma_+^n[u](x)$ and $T^{+n}\circ T^{-n}[v](x)=v(x).$ Using the definition of $T^-$ and $T^+$, by $u=T^{-n}[v]$, we have $$
 		u(y)\leq v(y_{-n}) + \sum_{i={-n}}^{-1}S (y_{i}, y_{i+1}) 		$$ for any $(y_{-n},\dots,y_{-1}) \in (\mathbb{R}^d)^n$, $y_0=y$ and  $$T^{+n}[u](x)=\sup_{\substack{(x_1,\dots,x_n) \in (\mathbb{R}^d)^n\\ x_0=x}} \left\{ u(x_n) - \sum_{i=0}^{n-1}S (x_i, x_{i+1}) \right\}=u(y) - \sum_{i=0}^{n-1}S (x_i, x_{i+1})$$
 		where $x_0=x,x_n=y$.
By combining these equation and inequality and eliminating $u(y)$, we obtain $$T^{+n}[u](x)+\sum_{i=0}^{n-1}S (x_i, x_{i+1})\leq v(y_{-n}) + \sum_{i={-n}}^{-1}S (y_{i}, y_{i+1}) $$ for any $(y_{-n},\dots,y_{-1}) \in (\mathbb{R}^d)^n$, $y_0=x_n=y$, $x_0=x$.
Since $T^{+n}[u](x)=T^{+n}\circ T^{-n}[v](x)=v(x)$, we conclude that $x\in\Sigma_-^n[v](y)$.
 \end{proof}
 
  \begin{lemma}\label{lemma:Pi2OfGraphSigma}
	 If $u\in T^-(C^0(\mathbb{T}^d))$, then $\Sigma_+[u]$ is surjective. In other words, \begin{align*}
    \bigcup_{x\in\mathbb{R}^d}\Sigma_+[u](x)=\mathbb{R}^d.
  \end{align*}
\end{lemma}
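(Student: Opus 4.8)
The plan is to reduce the statement to Lemma~\ref{lemma:Sigma-andSigma+} applied with $n=1$, using the hypothesis to write $u=T^-[v]$ for some $v\in C^0(\mathbb{T}^d)$. The goal becomes: for an arbitrary target $y\in\mathbb{R}^d$, exhibit some $x\in\mathbb{R}^d$ with $y\in\Sigma_+[u](x)$.

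First I would use Remark~\ref{remark:coercive}(a): since $S$ is continuous and coercive (Hypothesis~\ref{hypothesis:S1}), the infimum defining $T^-[v](y)=T^{-1}[v](y)$ is attained, so $\Sigma_-[v](y)=\Sigma_-^1[v](y)\neq\emptyset$. Fix any $x$ in this set. Then I would apply the implication (i)~$\Rightarrow$~(ii) of Lemma~\ref{lemma:Sigma-andSigma+} with $n=1$ and the chosen $v$: from $x\in\Sigma_-^1[v](y)$ it yields $y\in\Sigma_+^1[u](x)=\Sigma_+[u](x)$ (the additional conclusion $T^{+}\circ T^{-}[v](x)=v(x)$ being irrelevant for our purpose). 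Hence $y\in\bigcup_{x\in\mathbb{R}^d}\Sigma_+[u](x)$, and since $y$ was arbitrary, this union equals $\mathbb{R}^d$, which is the claimed surjectivity.

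There is essentially no genuine obstacle here; the entire content is already packaged in Lemma~\ref{lemma:Sigma-andSigma+}. The only points requiring mild care are: (a) one must know $\Sigma_-[v](y)$ is non-empty for \emph{every} $y$, which follows from coercivity and continuity of $S$ alone, so Hypothesis~\ref{hypothesis:S1} suffices and no semiconcavity is needed; and (b) the "moreover" clause $T^{+}\circ T^{-}[v](x)=v(x)$ in the cited lemma is an output of the direction we invoke, not a hypothesis, so it poses no restriction on $x$.
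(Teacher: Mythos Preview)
Your proposal is correct and follows exactly the same route as the paper's proof: write $u=T^-[v]$, use Remark~\ref{remark:coercive}(a) to find $x\in\Sigma_-[v](y)$, then invoke Lemma~\ref{lemma:Sigma-andSigma+} (with $n=1$) to conclude $y\in\Sigma_+[u](x)$.
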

\begin{proof}
	Suppose that $u=T^-[v]$ for some $v\in C^0(\mathbb{T}^d)$. By Remark \ref{remark:coercive} (a), for any $y\in\mathbb{R}^d$, there exists $x\in\mathbb{R}^d$ such that $x\in\Sigma_-[v](y)$. It follows from Lemma \ref{lemma:Sigma-andSigma+} that $y\in \Sigma_+[u](x)$.
	\end{proof}
 
 For any  $u\in C^0(\mathbb{T}^d)$ and $x\in\mathbb{R}^d$, we have $$T^+\circ T^-[u](x)=\sup_{y\in\mathbb{R}^d}\inf_{z\in\mathbb{R}^d}u(z)+S(z,y)-S(x,y)\leq \sup_{y\in\mathbb{R}^d}u(x)=u(x).$$
 Similarly, for any $u\in C^0(\mathbb{T}^d)$ and $n\in\mathbb{Z}_+$, we have \begin{equation}\label{eq:T+T-nondecreasing}
 	T^{+n}\circ T^{-n}[u]\leq \cdots\leq  T^{+}\circ T^-[u] \leq u\leq  T^-\circ T^+[u] \leq \cdots\leq T^{-n}\circ T^{+n}[u].
 \end{equation} 
    Let $u\in C^0(\mathbb{T}^d)$, we denote by $\mathcal{O}^n[u]$ the closed set $$\mathcal{O}^n[u]:=\{x\in\mathbb{R}^d\mid T^{+n}\circ T^{-n}[u](x)=u(x)\}.$$ It follows from the equation \eqref{eq:T+T-nondecreasing} that $\mathcal{O}^{n+1}[u]\subset \mathcal{O}^n[u]$ for all $n\in\mathbb{Z}_+$. It follows from Remark \ref{remark:coercive} (a) and Lemma \ref{lemma:Sigma-andSigma+} that 
 	 $$\Sigma_-^n[u](\mathbb{R}^d)=\mathcal{O}^n[u], \quad  \forall n\in\mathbb{Z}_+.$$
 	 So $\mathcal{O}^n[u]$ is nonempty as the image of the multi-valued map $\Sigma_-^n[u]$.  Hence $$\mathcal{O}^\infty[u]:=\bigcap_{n\in\mathbb{Z}_+}\mathcal{O}^n[u]$$
 	is a nonempty closed subset of $\mathbb{R}^d$.
 	
 	The following proposition reveals that  $\mathcal{O}^\infty[u]$  can, to some extent, be regarded as a closed invariant set (see Proposition \ref{prop:invariant}).
 	\begin{proposition}
 		For any $u\in C^0(\mathbb{T}^d)$, $m\in\mathbb{Z}_+$, we have $$\Sigma_-^m[u](\mathcal{O}^\infty[T^{-m}[u]])=\mathcal{O}^\infty[u].$$
 	\end{proposition}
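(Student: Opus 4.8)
The plan is to set $w:=T^{-m}[u]$, which lies in $C^0(\mathbb{T}^d)$ since $u$ does and $S$ is diagonal-periodic, and to reduce the statement to two facts already at hand: the identity $\Sigma_-^n[\,\cdot\,](\mathbb{R}^d)=\mathcal{O}^n[\,\cdot\,]$ and the semigroup law $\Sigma_-^m[u]\circ\Sigma_-^k[w]=\Sigma_-^{m+k}[u]$ from Remark~\ref{remark:coercive}(b). Taking unions over the base point and combining the two gives the key formula
\[
\Sigma_-^m[u]\big(\mathcal{O}^k[w]\big)=\Sigma_-^m[u]\big(\Sigma_-^k[w](\mathbb{R}^d)\big)=\Sigma_-^{m+k}[u](\mathbb{R}^d)=\mathcal{O}^{m+k}[u],\qquad \forall\,k\ge 0 .
\]

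The inclusion $\Sigma_-^m[u](\mathcal{O}^\infty[w])\subseteq\mathcal{O}^\infty[u]$ is then purely formal: since $\mathcal{O}^\infty[w]\subseteq\mathcal{O}^k[w]$ for every $k$, the key formula yields $\Sigma_-^m[u](\mathcal{O}^\infty[w])\subseteq\mathcal{O}^{m+k}[u]$ for all $k$, and intersecting over $k$ (using that $\{\mathcal{O}^n[u]\}_n$ is decreasing) gives the claim.

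For the reverse inclusion, fix $x\in\mathcal{O}^\infty[u]$. The key formula gives $x\in\mathcal{O}^{m+k}[u]=\Sigma_-^m[u](\mathcal{O}^k[w])$ for each $k$, so one may choose $a_k\in\mathcal{O}^k[w]$ with $x\in\Sigma_-^m[u](a_k)$; the task is to manufacture a \emph{single} $a\in\mathcal{O}^\infty[w]=\bigcap_k\mathcal{O}^k[w]$ with $x\in\Sigma_-^m[u](a)$. I would do this by showing that $A_x:=\{a\in\mathbb{R}^d\mid x\in\Sigma_-^m[u](a)\}$ — the set of endpoints $a$ of length-$m$ minimizing chains $x=z_0,z_1,\dots,z_m=a$ with $u(z_0)+\sum_{i=0}^{m-1}S(z_i,z_{i+1})=T^{-m}[u](a)$ — is compact. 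Boundedness: along such a chain every edge satisfies $S(z_i,z_{i+1})\le\|T^{-m}[u]\|_\infty+\|u\|_\infty-(m-1)\inf S$, a finite constant (a continuous, coercive, diagonal-periodic function on $\mathbb{R}^{2d}$ is bounded below), so coercivity of $S$ bounds $\|z_i-z_{i+1}\|$ by some $R$ independent of $a$ and hence $\|a-x\|\le mR$. Closedness: if $a_j\to a$ with $a_j\in A_x$, the intermediate chain points obey the same uniform bound, so a subsequence converges and, by continuity of $S$ and of $w=T^{-m}[u]$, the limiting chain is again minimizing, so $a\in A_x$. Granting compactness of $A_x$, the sets $A_x\cap\mathcal{O}^k[w]$ form a decreasing family of nonempty compact sets ($a_k$ lies in the $k$-th one, and each $\mathcal{O}^k[w]$ is closed), hence $A_x\cap\mathcal{O}^\infty[w]\ne\emptyset$; any $a$ there satisfies $a\in\mathcal{O}^\infty[w]$ and $x\in\Sigma_-^m[u](a)\subseteq\Sigma_-^m[u](\mathcal{O}^\infty[w])$, which finishes the proof.

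The main obstacle is precisely this extraction of a single good $a$: because each $\mathcal{O}^k[w]$ is $\mathbb{Z}^d$-invariant and therefore unbounded, one cannot simply pass to the limit level by level, and it is the boundedness of $A_x$ — which rests on coercivity of $S$ together with the fact that $S$, $u$ and $w$ are all bounded above and below — that powers the nested-compact-sets argument. Everything else amounts to bookkeeping with the semigroup law and the monotonicity $\mathcal{O}^{k+1}\subseteq\mathcal{O}^k$.
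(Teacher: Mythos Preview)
Your proof is correct and follows the same overall architecture as the paper's: both derive the key identity $\Sigma_-^m[u](\mathcal{O}^k[w])=\mathcal{O}^{m+k}[u]$ from the semigroup law in Remark~\ref{remark:coercive}(b), dispatch the inclusion $\subseteq$ formally, and for $\supseteq$ produce points $a_k\in\mathcal{O}^k[w]$ with $x\in\Sigma_-^m[u](a_k)$ lying in a fixed compact set, then extract a limit in $\mathcal{O}^\infty[w]$. The one substantive difference is how compactness is obtained. The paper invokes Lemma~\ref{lemma:Sigma-andSigma+} to rewrite the condition $x\in\Sigma_-^m[u](a)$ as $a\in\Sigma_+^m[w](x)$, so that your set $A_x$ is exactly $\Sigma_+^m[w](x)$, whose compactness is then taken for granted; it then passes to a convergent subsequence and uses Lemma~\ref{lemma:Sigma-andSigma+} once more to verify the limit works. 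You instead prove compactness of $A_x$ directly from coercivity of $S$ and boundedness of $u,w$, and conclude via the finite intersection property of the nested compacta $A_x\cap\mathcal{O}^k[w]$. Your argument is more self-contained and makes the role of coercivity explicit; the paper's is shorter because it recycles the $\Sigma_-/\Sigma_+$ duality already established.
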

 	\begin{proof}
 		By Remark \ref{remark:coercive} (b), $$\Sigma_-^m[u]\circ\Sigma_-^n[T^{-m}[u]](\mathbb{R}^d)=\Sigma_-^{m+n}[u](\mathbb{R}^d), \quad \forall n\in\mathbb{Z}_+.$$
 		This implies that $$\Sigma_-^m[u](\mathcal{O}^n[T^{-m}[u]])=\mathcal{O}^{m+n}[u],\quad \forall n\in\mathbb{Z}_+.$$
 		Taking the limit as $n\to+\infty$, we conclude that $$\Sigma_-^m[u](\mathcal{O}^\infty[T^{-m}[u]])\subset\mathcal{O}^{\infty}[u].$$
 		Conversely, for any $x\in\mathcal{O}^{\infty}[u]$, by definition, $x\in\mathcal{O}^{m+n}[u]$ for all $n\in\mathbb{Z}_+$. Using Remark \ref{remark:coercive} (b) again, we have $x\in\Sigma_-^m[u](\mathcal{O}^n[T^{-m}[u]]) $ for all $n\in\mathbb{Z}_+$. Therefore, there exists a sequence $y_n\in \mathcal{O}^n[T^{-m}[u]]$ such that $x\in\Sigma_-^m[u](y_n).$ By Lemma \ref{lemma:Sigma-andSigma+}, $y_n\in \Sigma_+^m[T^{-m}[u]](x)$, which is a compact subset (depending on $x$). Hence, there exists a convergent subsequence $y_{n_k}\rightarrow y\in\Sigma_+^m[T^{-m}[u]](x)$. Since $x\in\mathcal{O}^{\infty}[u]$, by Lemma \ref{lemma:Sigma-andSigma+} again, we have $x\in \Sigma_-^m[u](y)$. Furthermore, since for any $n\in\mathbb{Z}_+$, $\mathcal{O}^{n+1}[T^{-m}[u]]\subset\mathcal{O}^n[T^{-m}[u]]$ and $\mathcal{O}^n[T^{-m}[u]]$ is closed, we conclude $y\in\mathcal{O}^\infty[T^{-m}[u]]$. Thus, \[\Sigma_-^m[u](\mathcal{O}^\infty[T^{-m}[u]])\supset\mathcal{O}^{\infty}[u].\qedhere\]
 	\end{proof}
 	
The following theorem is well-known:
 \begin{theorem}[Discrete Weak K.A.M. Theorem]\label{theorem:DWKT} There exists a unique constant $\overline{S}$ such that the equation $v=T^-[v]-\overline{S}$ admits at least one solution $v\in C^0(\mathbb{T}^d)$ and the equation $u=T^+[u]+\overline{S}$ admits at least one solution $u\in C^0(\mathbb{T}^d)$.
\end{theorem}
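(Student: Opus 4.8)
The plan is to obtain a backward eigenfunction by a Schauder fixed point argument, to produce a forward eigenfunction by reflecting the generating function, and then to show that the two eigenvalues coincide and that each is unique by exploiting the monotonicity of $T^{\pm}$ together with the inequality chain~\eqref{eq:T+T-nondecreasing}. First I would record the elementary properties that make everything run: diagonal-periodicity of $S$ makes $T^-$ and $T^+$ preserve $\mathbb{Z}^d$-periodicity, Remark~\ref{remark:LaxOleinik}(a) makes their images continuous, and both operators are monotone and commute with additive constants; consequently they are $1$-Lipschitz for the uniform norm, since $g\le h+\varepsilon$ forces $T^-[g]\le T^-[h]+\varepsilon$ and symmetrically (and the same for $T^+$).

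Next I would set up the fixed point argument for $T^-$. Fix a base point $x_0$, and let $C>0$ and a modulus of continuity $\omega$ be the uniform oscillation bound and the equicontinuity modulus of the family $T^-(B(\mathbb{T}^d))$ provided by Remark~\ref{remark:LaxOleinik}(a). Consider
\[
\mathcal{K}:=\bigl\{\,g\in C^0(\mathbb{T}^d):\ g(x_0)=0,\ \operatorname{osc}(g)\le C,\ |g(x)-g(y)|\le\omega(\|x-y\|)\ \text{ for all }x,y\,\bigr\}.
\]
Each defining condition is affine or convex, so $\mathcal{K}$ is convex; it is bounded in $C^0(\mathbb{T}^d)$ (since $g(x_0)=0$ and $\operatorname{osc}(g)\le C$ force $|g|\le C$), closed, and equicontinuous, hence compact by Arzel\`a--Ascoli. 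The map $\Phi(g):=T^-[g]-T^-[g](x_0)$ sends $\mathcal{K}$ into itself — subtracting a constant from $T^-[g]\in T^-(B(\mathbb{T}^d))$ leaves the oscillation and the modulus unchanged and normalizes the value at $x_0$ — and it is continuous by the $1$-Lipschitz property. Schauder's theorem then gives $v\in\mathcal{K}$ with $\Phi(v)=v$, i.e. $v=T^-[v]-\overline{S}$ with $\overline{S}:=T^-[v](x_0)$.

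To get a forward solution, I would put $\check{S}(x,y):=S(y,x)$; then $\check{S}$ still satisfies Hypothesis~\ref{hypothesis:S1}, its backward operator $\check{T}^-$ satisfies $T^+[u]=-\check{T}^-[-u]$ (so $T^+(B(\mathbb{T}^d))$ is also equicontinuous with bounded oscillation), and applying the previous step to $\check{S}$ yields $w\in C^0(\mathbb{T}^d)$ and $\sigma\in\mathbb{R}$ with $w=\check{T}^-[w]-\sigma$, whence $u:=-w$ satisfies $u=T^+[u]+\sigma$. Finally, to match the constants and get uniqueness: given any backward solution $v=T^-[v]-\overline{S}$ and any forward solution $u=T^+[u]+\sigma$, iterate to get $T^{-n}[v]=v+n\overline{S}$ and $T^{+n}[u]=u-n\sigma$; from~\eqref{eq:T+T-nondecreasing}, $T^{+n}\circ T^{-n}[v]\le v$ gives $T^{+n}[v]\le v-n\overline{S}$ and $u\le T^{-n}\circ T^{+n}[u]$ gives $T^{-n}[u]\ge u+n\sigma$. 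Combining the latter with $\|T^{-n}[v]-T^{-n}[u]\|_\infty\le\|v-u\|_\infty=:M$ gives $u(x)+n\sigma\le T^{-n}[u](x)\le v(x)+n\overline{S}+M$, hence $\sigma\le\overline{S}+2M/n$; the symmetric estimate with $T^{+n}$ gives $\overline{S}\le\sigma+2M/n$, so $\sigma=\overline{S}$ after letting $n\to\infty$. Thus the constants from the two existence steps agree, and every backward (resp.\ forward) eigenvalue equals this common value, which is exactly the asserted uniqueness.

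The hard part will be the fixed point argument: one has to normalize by the value at a base point rather than by the minimum, so that $\mathcal{K}$ stays convex (and hence admissible for Schauder) while remaining compact and $\Phi$-invariant. Everything else is bookkeeping once the equicontinuity and bounded-oscillation statement of Remark~\ref{remark:LaxOleinik}(a) and the monotonicity chain~\eqref{eq:T+T-nondecreasing} are available.
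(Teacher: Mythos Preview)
The paper does not actually prove Theorem~\ref{theorem:DWKT}: it is introduced with ``The following theorem is well-known'' and simply stated, with the implicit reference to the literature (e.g.\ \cite{GT11, SuT18}) given earlier in the introduction. So there is no in-paper proof to compare against.

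Your proposal is a correct and standard route. The Schauder argument on the normalized set $\mathcal{K}$ is exactly the usual way to produce a backward weak K.A.M.\ solution under Hypothesis~\ref{hypothesis:S1}, and your reflection trick $\check S(x,y)=S(y,x)$, $T^+[u]=-\check T^-[-u]$ cleanly reduces the forward case to the backward one. The uniqueness/matching argument is also fine: from $v=T^-[v]-\overline S$ and $u=T^+[u]+\sigma$ you get $T^{-n}[v]=v+n\overline S$, $T^{+n}[u]=u-n\sigma$, and then the $1$-Lipschitz property of $T^{\pm n}$ together with the chain~\eqref{eq:T+T-nondecreasing} (which gives $T^{-n}[u]\ge u+n\sigma$ and $T^{+n}[v]\le v-n\overline S$) forces $|\sigma-\overline S|\le 2\|u-v\|_\infty/n\to 0$. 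This simultaneously identifies the two constants and shows that any backward eigenvalue equals any forward eigenvalue, hence the uniqueness. One small cosmetic point: you do not actually need the inequality $T^{+n}[v]\le v-n\overline S$ separately, since the two-sided estimate already follows from applying the Lipschitz bound to both $T^{-n}$ and $T^{+n}$; but what you wrote is correct as stated.
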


Such functions $v:\mathbb{R}^d\rightarrow\mathbb{R}$ are called \emph{negative discrete weak K.A.M solutions} and such functions $u:\mathbb{R}^d\rightarrow\mathbb{R}$ are called \emph{positive discrete weak K.A.M solutions}. The constant $\overline{S}$ is called the \emph{effective interaction}.

\begin{definition}
  A function $u\in C^0(\mathbb{T}^d)$ is called a \emph{sub-action} with respect to $S$ if \begin{align*}
    \forall (x,y)\in\mathbb{R}^{d}\times\mathbb{R}^{d} , \ \quad u(y)-u(x)\leq  S(x,y)-\overline{S},
  \end{align*}
  or equivalently, if $u\leq T^-[u]-\overline{S}$, or equivalently, if $u\geq T^+[u]+\overline{S}$. 
\end{definition}

\begin{definition}
  Let $u$ be a sub-action. We say that  $(x,y)\in \mathbb{R}^{d}\times \mathbb{R}^{d}$ is \emph{$u$-calibrated} if $u(y)-u(x)=S(x,y)-\overline{S}$.  A sequence $\{x_n\}_{n\in\mathbb{Z}}$ is called \emph{$u$-calibrated} if $(x_n,x_{n+1})$ is $u$-calibrated for all $n\in\mathbb{Z}$.  The \emph{non-strict set} $\mathcal{NS}[u]$ of $u$ collects all $u$-calibrated pairs:\begin{align*}
    \mathcal{NS}[u]:=\{(x,y)\in\mathbb{R}^{2}\mid u(y)-u(x)=S(x,y)-\overline{S}\}.
  \end{align*}
\end{definition}

\begin{remark}\label{remark:convergence} For any sub-action $u$, the functions $T^\mp[u]$ are also  sub-actions \cite[Proposition 1.2.6]{zavidovique2023}.  Thus, we have \begin{align*}
    u\leq T^-[u]-\overline{S}\leq T^{-2}[u]-2\overline{S}\leq T^{-3}[u]-3\overline{S}\leq \cdots
  \end{align*}
  This implies that the sequence  $\{T^{-n}[u]-n\overline{S}\}_{n\in\mathbb{Z}_+}$ is non-decreasing. Moreover, we can show that the sequence $\{T^{-n}[u]-n\overline{S}\}_{n\in\mathbb{Z}_+}$ is uniformly bounded \cite[Proposition 1.2.3]{zavidovique2023}. Note that (a) of Remark \ref{remark:LaxOleinik} states that the sequence $\{T^{-n}[u]-n\overline{S}\}_{n\in\mathbb{Z}_+}$ is equicontinuous. Therefore,  $\{T^{-n}[u]-n\overline{S}\}_{n\in\mathbb{Z}_+}$ converges uniformly to a function $u_-\in C^0(\mathbb{T}^d)$, which is a negative discrete weak K.A.M. solution.
\end{remark}

\begin{lemma}\label{lemma:u+=u-+}
	Let $u$ be a sub-action. Set $$u_-:=\lim_{n\to+\infty}T^{-n}[u]-n\overline{S}, \quad u_+:=\lim_{n\to+\infty}T^{+n}[u]+n\overline{S}$$ and $$u_{-+}:=\lim_{n\to+\infty}T^{+n}[u_-]+n\overline{S},\quad u_{+-}:=\lim_{n\to+\infty}T^{-n}[u_+]-n\overline{S}$$ Then $u_+=u_{-+}$ and $u_-=u_{+-}$.
\end{lemma}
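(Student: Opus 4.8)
The plan is to prove each of the two identities as a pair of opposite inequalities, and by the obvious symmetry (exchange $T^-\leftrightarrow T^+$ and reverse all inequalities) it is enough to establish $u_+=u_{-+}$. First I would record the preliminary facts I intend to use, all of which are available above: $T^\pm$ are order-preserving and satisfy $T^\pm[w+a]=T^\pm[w]+a$ for constants $a$; for a sub-action $u$ the functions $T^{\mp}[u]$ are again sub-actions (Remark~\ref{remark:convergence}), and adding a constant preserves being a sub-action, so $\{T^{-n}[u]-n\overline S\}_n$ is non-decreasing and converges \emph{uniformly} to $u_-$ while $\{T^{+n}[u]+n\overline S\}_n$ is non-increasing and converges uniformly to $u_+$; in particular $u_+\le u\le u_-$. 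Finally, $u_-$ is itself a (negative) weak K.A.M. solution, hence a sub-action, so $\{T^{+n}[u_-]+n\overline S\}_n$ is non-increasing and converges uniformly to $u_{-+}$.

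The inequality $u_+\le u_{-+}$ is then immediate: apply the order-preserving operator $T^{+n}$ to $u\le u_-$, add $n\overline S$, and pass to the limit. For the reverse inequality $u_{-+}\le u_+$, which I expect to be the crux, monotonicity alone is not enough; the idea is to approximate $u_-$ \emph{from above} by a finite backward iterate. Given $\varepsilon>0$, uniform convergence of $T^{-N}[u]-N\overline S$ to $u_-$ lets me pick $N$ with $u_-\le \bigl(T^{-N}[u]-N\overline S\bigr)+\varepsilon$ on all of $\mathbb{R}^d$. For every $k\ge 0$, applying $T^{+(N+k)}$ (order-preserving, equivariant under additive constants) and writing $T^{+(N+k)}\circ T^{-N}=T^{+k}\circ\bigl(T^{+N}\circ T^{-N}\bigr)$, then using $T^{+N}\circ T^{-N}[u]\le u$ from \eqref{eq:T+T-nondecreasing} together with monotonicity of $T^{+k}$, I obtain
\[
T^{+(N+k)}[u_-]+(N+k)\overline S\ \le\ T^{+k}[u]+k\overline S+\varepsilon .
\]
Letting $k\to+\infty$ (the left side tends to $u_{-+}$, the right side to $u_++\varepsilon$) gives $u_{-+}\le u_++\varepsilon$, and then $\varepsilon\to 0$ yields $u_{-+}\le u_+$; combined with the easy inequality, $u_+=u_{-+}$. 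The identity $u_-=u_{+-}$ follows by the mirror-image argument: from $u_+\le u$ one gets $u_{+-}\le u_-$ for free, and for the reverse one approximates $u_+$ from below, picking $N$ with $\bigl(T^{+N}[u]+N\overline S\bigr)-\varepsilon\le u_+$, applying $T^{-(N+k)}$, and using $u\le T^{-N}\circ T^{+N}[u]$ from \eqref{eq:T+T-nondecreasing}.

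The main obstacle is exactly the reverse inequality $u_{-+}\le u_+$: the pointwise comparison $u\le u_-$ pushes forward only to $u_+\le u_{-+}$, and there is no pointwise inequality in the opposite direction to run through the semigroup. Resolving this relies essentially on two ingredients already in hand — the \emph{uniformity} of the convergence in Remark~\ref{remark:convergence}, which is what allows the gap $u_--\bigl(T^{-N}[u]-N\overline S\bigr)$ to be absorbed into a single constant $\varepsilon$, and the monotone chain \eqref{eq:T+T-nondecreasing}, which lets a long forward iterate cancel a shorter backward iterate. Everything else in the argument (order-preservation, additive-constant equivariance, and existence of all the relevant limits) is routine bookkeeping.
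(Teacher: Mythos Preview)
Your proof is correct and follows essentially the same approach as the paper: both obtain $u_+\le u_{-+}$ from $u\le u_-$ by monotonicity, and for the reverse both use uniform convergence to bound $u_-\le T^{-N}[u]-N\overline S+\varepsilon$, push this through enough forward iterates, and cancel via $T^{+N}\circ T^{-N}[u]\le u$. The only cosmetic difference is that the paper parametrizes the error as $\varepsilon(n)=\sup(u_--T^{-n}[u]+n\overline S)$ and lets $n\to\infty$ at the end, whereas you fix $\varepsilon$ first and choose $N$ accordingly.
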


\begin{proof}
We show the first equality.
	By Remark \ref{remark:convergence}, we have $u\leq u_-$ and thus $u_+\leq u_{-+}$. Letting $\varepsilon(n):=\sup (u_--T^{-n}[u]+n\overline{S})$ for any $n\in\mathbb{Z}_+$, then $$u_--\varepsilon(n)\leq T^{-n}[u]-n\overline{S}.$$
	Then using the definition of $T^{+}$, we have $$T^{+n}[u_-]-\varepsilon(n)\leq T^{+n}\circ T^{-n}[u]-n\overline{S}\leq u-n\overline{S}.$$
	Then for any $m\in\mathbb{Z}_+$, we have $$T^{+(n+m)}[u_-]+(n+m)\overline{S}-\varepsilon(n)\leq T^{+m}[u]+m\overline{S}.$$
	Letting $m\to+\infty$, we have $u_{-+}-\varepsilon(n)\leq u_+$ for any $n\in\mathbb{Z}_+$. Letting $n\to+\infty$, we have $u_{-+}\leq u_+$.
\end{proof}

 \begin{proposition}\label{proposition:ConjugatePair}
 	Let $u\in C^0(\mathbb{T}^d)$. Then $\{T^{+n}\circ T^{-n}[u]\}_{n\in\mathbb{Z}_+}$
 	converges uniformly to a function $\widetilde{u}_+\in C^0(\mathbb{T}^d)$ and  $\{T^{-n}\circ T^{+n}[u]\}_{n\in\mathbb{Z}_+}$
 	converges uniformly to a function $\widetilde{u}_-\in C^0(\mathbb{T}^d)$. Furthermore, if $u$ is a sub-action, then $$\widetilde{u}_+=u_+:=\lim_{n\to+\infty}T^{+n}[u]+n\overline{S}$$ is a positive discrete weak K.A.M. solution and $$\widetilde{u}_-=u_-:=\lim_{n\to+\infty}T^{-n}[u]-n\bar{S}$$ is a negative discrete weak K.A.M. solution. We call $(u_-,u_+)$ a conjugate pair. 
\end{proposition}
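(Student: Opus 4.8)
The plan is to separate the two assertions: first the unconditional uniform convergence of the nested sequences, then their identification with the conjugate pair when $u$ is a sub-action. Throughout I will use the elementary facts that $T^-$ and $T^+$ are order-preserving and commute with the addition of constants (so $T^{\pm n}[w+c]=T^{\pm n}[w]+c$), together with the monotone chain \eqref{eq:T+T-nondecreasing}.

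\textbf{Step 1 (two-sided uniform bounds).} By \eqref{eq:T+T-nondecreasing} the sequence $\{T^{+n}\circ T^{-n}[u]\}_n$ is non-increasing, hence bounded above by $u$, and $\{T^{-n}\circ T^{+n}[u]\}_n$ is non-decreasing, hence bounded below by $u$. For the remaining bound I would fix a negative discrete weak K.A.M. solution $v$ (which exists by Theorem~\ref{theorem:DWKT} and, after subtracting a constant, may be taken with $v\le u$). Then $T^{-n}[v]=v+n\overline S\le T^{-n}[u]$, so $T^{+n}\circ T^{-n}[u]\ge T^{+n}\circ T^{-n}[v]=T^{+n}[v]+n\overline S\ge v_+$, where $v_+:=\lim_n(T^{+n}[v]+n\overline S)$ is the positive weak K.A.M. solution produced by the forward analogue of Remark~\ref{remark:convergence} (applicable since $v$ is a sub-action) and the inequality holds because that sequence is non-increasing. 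Symmetrically $T^{-n}\circ T^{+n}[u]\le w_-$ for a suitable negative weak K.A.M. solution $w_-$. Thus both nested sequences are uniformly bounded.

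\textbf{Step 2 (uniform convergence).} Write $T^{+n}\circ T^{-n}[u]=T^{+}[g_n]$ with $g_n:=T^{+(n-1)}\circ T^{-(n-1)}[T^-[u]]$; since $T^-[u]\in C^0(\mathbb{T}^d)$, Step~1 applied to $T^-[u]$ shows $\{g_n\}_n$ is uniformly bounded. By the forward analogue of Remark~\ref{remark:LaxOleinik}(a) (obtained from the backward statement by replacing $S(x,y)$ with $\widetilde S(a,b)=S(b,a)$, which still satisfies Hypothesis~\ref{hypothesis:S1}), $T^+$ sends a uniformly bounded family to an equicontinuous one; hence $\{T^{+n}\circ T^{-n}[u]\}_n$ is equicontinuous and uniformly bounded, so precompact in $C^0(\mathbb{T}^d)$ by Arzel\`a--Ascoli. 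A precompact, pointwise monotone sequence converges uniformly (any subsequence has a uniformly convergent sub-subsequence, whose limit must be the pointwise limit), so $T^{+n}\circ T^{-n}[u]\to\widetilde u_+\in C^0(\mathbb{T}^d)$. The same argument, using \eqref{eq:T+T-nondecreasing} in the opposite direction, gives $T^{-n}\circ T^{+n}[u]\to\widetilde u_-\in C^0(\mathbb{T}^d)$.

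\textbf{Step 3 (identification with the conjugate pair).} Now suppose $u$ is a sub-action, so $u\le T^{-n}[u]-n\overline S$ and, by Remark~\ref{remark:convergence}, $T^{-n}[u]-n\overline S\uparrow u_-$ uniformly. Applying $T^{+n}$ to $T^{-n}[u]\le u_-+n\overline S$ gives $T^{+n}\circ T^{-n}[u]\le T^{+n}[u_-]+n\overline S$; letting $n\to\infty$ and using the forward Remark~\ref{remark:convergence} for the sub-action $u_-$ together with Lemma~\ref{lemma:u+=u-+} yields $\widetilde u_+\le\lim_n(T^{+n}[u_-]+n\overline S)=u_{-+}=u_+$. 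Conversely, applying $T^{+n}$ to $u\le T^{-n}[u]-n\overline S$ gives $T^{+n}[u]+n\overline S\le T^{+n}\circ T^{-n}[u]$, and letting $n\to\infty$ gives $u_+\le\widetilde u_+$; hence $\widetilde u_+=u_+$. The equality $\widetilde u_-=u_-$ is proved in the mirror fashion, using $u\ge T^{+n}[u]+n\overline S$ and $u_{+-}=u_-$ from Lemma~\ref{lemma:u+=u-+}. That $u_+$ (resp.\ $u_-$) is a positive (resp.\ negative) discrete weak K.A.M. solution is the content of the (symmetric) Remark~\ref{remark:convergence}.

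\textbf{Main obstacle.} The calculus of the Lax--Oleinik operators — order-preservation, commutation with constants, the monotone chains — is routine; the one genuinely delicate point is upgrading pointwise monotone convergence to \emph{uniform} convergence with a \emph{continuous} limit in Step~2. This needs the equicontinuity input of Remark~\ref{remark:LaxOleinik}(a) \emph{and} the uniform lower bound of Step~1: without controlling $T^{+n}\circ T^{-n}[u]$ from below the limit could a priori fail to be finite or continuous, so the comparison with a weak K.A.M. solution is the key maneuver.
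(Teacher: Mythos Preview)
Your proof is correct, and its skeleton---monotonicity, equicontinuity, boundedness, then identification via Lemma~\ref{lemma:u+=u-+}---matches the paper's. The execution differs in two places worth noting.

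For the uniform lower bound in Step~1, the paper does not compare with a weak K.A.M.\ solution. Instead it uses the nonemptiness of $\mathcal{O}^\infty[u]$ established just before Proposition~2.8: every $T^{+n}\circ T^{-n}[u]$ agrees with $u$ on $\mathcal{O}^\infty[u]\subset\mathcal{O}^n[u]$, so all the functions share a common value at some fixed point, and combined with the uniformly bounded oscillation (Remark~\ref{remark:LaxOleinik}(a)) this gives uniform boundedness. Your comparison argument is equally valid and arguably more transportable, but it imports Theorem~\ref{theorem:DWKT} as an external input, whereas the paper's route stays internal to the $\mathcal{O}^n$ machinery.

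For the identification in Step~3, the paper sandwiches $T^{-n}[u]-n\overline S$ between $u_-\pm\varepsilon(n)$ with $\varepsilon(n):=\sup(u_--T^{-n}[u]+n\overline S)\to 0$, applies $T^{+n}$, and passes to the limit to get $\widetilde u_+=u_{-+}$ directly, then cites Lemma~\ref{lemma:u+=u-+}. You replace the two-sided $\varepsilon(n)$-sandwich by two separate one-sided inequalities, $T^{-n}[u]-n\overline S\le u_-$ and $u\le T^{-n}[u]-n\overline S$, which yields $u_+\le\widetilde u_+\le u_{-+}=u_+$ without any error-tracking. This is a slight streamlining of the same idea.
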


 \begin{proof}[Proof of Proposition \ref{proposition:ConjugatePair}]
 	Let us provide the proof for  the positive discrete weak K.A.M. solution and the the rest  can be obtain similarly. 
 	
 	Let $u\in C^0(\mathbb{T}^d)$.    It follows from the equation \eqref{eq:T+T-nondecreasing} that $\{T^{+n}\circ T^{-n}[u]\}_{n\in\mathbb{Z}_+}$ is non-increasing. It follows from item (a) of Remark \ref{remark:LaxOleinik} that $\{T^{+n}\circ T^{-n}[u]\}_{n\in\mathbb{Z}_+}$ is equicontinuous. It follows from  item (a) of Remark \ref{remark:LaxOleinik} that $\{T^{+n}\circ T^{-n}[u]\}_{n\in\mathbb{Z}_+}$ has uniformly bounded oscillation. Combining this with the fact that $\mathcal{O}^\infty[u]\ne\emptyset$, we obtain $\{T^{+n}\circ T^{-n}[u]\}_{n\in\mathbb{Z}_+}$ is uniformly bounded. Therefore,  $\{T^{+n}\circ T^{-n}[u]\}_{n\in\mathbb{Z}_+}$ converges uniformly to a function $\widetilde{u}_+\in C^0(\mathbb{T}^d)$.
 	
 	 Let $u$ be a sub-action. We show that $\widetilde{u}_+=u_{-+}:=\lim_{n\to+\infty}T^{+n}[u_-]+n\overline{S}$. Then by Lemma \ref{lemma:u+=u-+} we get $\widetilde{u}_+=u_+$. Letting $\varepsilon(n):=\sup (u_--T^{-n}[u]+n\overline{S})$ for any $n\in\mathbb{Z}_+$, then $$u_--\varepsilon(n)\leq T^{-n}[u]-n\overline{S}\leq u_-+\varepsilon(n).$$
 	 Applying the operator $T^{+n}$ to both sides of the inequality, we have $$T^{+n}[u_-]-\varepsilon(n)\leq T^{+n}\circ T^{-n}[u]-n\overline{S}\leq T^{+n}[u_-]+\varepsilon(n).$$
 	 Then $$T^{+n}[u_-]+n\overline{S}-\varepsilon(n)\leq T^{+n}\circ T^{-n}[u]\leq T^{+n}[u_-]+n\overline{S}+\varepsilon(n).$$
 	 Letting $n\to+\infty$, we obtain $\widetilde{u}_+=u_{-+}$.
 	  \end{proof}

  \begin{lemma}
  	Let $u$ be a sub-action. Then $$\{x\mid u_-(x)=u_+(x)\}=\mathcal{O}^\infty[u_-]\subset O^\infty[u].$$
  	  \end{lemma}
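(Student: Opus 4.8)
The plan is to reduce the whole statement to the pointwise identity $\mathcal{O}^\infty[v]=\{x\mid v(x)=v_+(x)\}$, valid for every sub-action $v$, and then to apply it to $v=u$ and to $v=u_-$. To prove this identity, note that by \eqref{eq:T+T-nondecreasing} the sequence $\{T^{+n}\circ T^{-n}[v]\}_n$ is non-increasing with $T^{+n}\circ T^{-n}[v]\le v$, and by Proposition \ref{proposition:ConjugatePair} (applicable since $v$ is a sub-action) it converges uniformly to $v_+$; hence $v_+\le T^{+n}\circ T^{-n}[v]\le v$ pointwise for all $n$. Since $\mathcal{O}^\infty[v]$ is the set of $x$ with $T^{+n}\circ T^{-n}[v](x)=v(x)$ for every $n$, the sandwich shows this holds for all $n$ exactly when $v(x)=v_+(x)$ (the forward implication by letting $n\to+\infty$, the converse immediate from the sandwich); thus $\mathcal{O}^\infty[v]=\{v=v_+\}$.

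Applying this with $v=u$ yields $\mathcal{O}^\infty[u]=\{x\mid u(x)=u_+(x)\}$. Applying it with $v=u_-$ --- which is a sub-action, being a negative discrete weak K.A.M. solution --- yields $\mathcal{O}^\infty[u_-]=\{x\mid u_-(x)=\lim_n(T^{+n}[u_-]+n\overline{S})(x)\}$, and Lemma \ref{lemma:u+=u-+} identifies $\lim_n(T^{+n}[u_-]+n\overline{S})$ with $u_+$. This already gives the first asserted equality $\{x\mid u_-(x)=u_+(x)\}=\mathcal{O}^\infty[u_-]$.

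For the inclusion $\mathcal{O}^\infty[u_-]\subset\mathcal{O}^\infty[u]$ it then suffices to check $\{u_-=u_+\}\subset\{u=u_+\}$, which follows from the sandwich $u_+\le u\le u_-$: the inequality $u\le u_-$ is Remark \ref{remark:convergence} (the sequence $T^{-n}[u]-n\overline{S}$ increases from $u$ to $u_-$), while $u_+\le u$ is its forward mirror (for a sub-action $u\ge T^+[u]+\overline{S}$, and $T^+[u]$ is again a sub-action, so $T^{+n}[u]+n\overline{S}$ decreases from $u$ to $u_+$). Hence $u_-(x)=u_+(x)$ forces $u(x)=u_+(x)$, completing the proof. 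I do not expect a real obstacle; the one point deserving care is that the convergence in Proposition \ref{proposition:ConjugatePair} must be used \emph{pointwise}, so that each term $T^{+n}\circ T^{-n}[v]$ dominates its limit $v_+$ --- this is exactly what upgrades ``$v=v_+$ at $x$'' to ``$x\in\mathcal{O}^n[v]$ for every $n$''.
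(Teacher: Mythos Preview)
Your proposal is correct and follows essentially the same approach as the paper: both use the sandwich $v_+\le T^{+n}\circ T^{-n}[v]\le v$ (from the non-increasing convergence in Proposition~\ref{proposition:ConjugatePair}) to identify $\mathcal{O}^\infty[v]=\{v=v_+\}$, apply this with $v=u_-$ together with Lemma~\ref{lemma:u+=u-+} for the equality, and then use $u_+\le u\le u_-$ for the inclusion. Your write-up is in fact a bit more explicit than the paper's, which compresses the argument into two sentences.
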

  
  \begin{proof}
  Since $\{T^{+n}\circ T^{-n}[u_-]\}_{n\in\mathbb{Z}_+}$ is non-increasing and converges to $u_+$, we obtain $\{x\mid u_-(x)=u_+(x)\}=\mathcal{O}^\infty[u_-]$ by the definition of $\mathcal{O}^\infty[u_-]$. Since $u_-\geq u$ and $\{T^{+n}\circ T^{-n}[u]\}_{n\in\mathbb{Z}_+}$ is also non-increasing and also converges to $u_+$, we obtain $\mathcal{O}^\infty[u_-]\subset O^\infty[u]$.
  	  \end{proof}
  	   We can directly deduce from the above lemma:
  \begin{proposition}
  	\[\bigcap_{(u_-,u_+) \text{ conjugate pair}}\{x\mid u_-(x)=u_+(x)\}=\bigcap_{u \text{ weak K.A.M.}}\mathcal{O}^\infty[u]=\bigcap_{u \text{ sub-action}}\mathcal{O}^\infty[u]. \]
  \end{proposition}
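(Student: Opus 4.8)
The claim has three equalities to establish, and the heart of the argument is the two preceding lemmas together with the monotonicity chain \eqref{eq:T+T-nondecreasing}. The plan is to show the chain of inclusions
\[
\bigcap_{(u_-,u_+)}\{x\mid u_-(x)=u_+(x)\}\ \supseteq\ \bigcap_{u\ \mathrm{wKAM}}\mathcal{O}^\infty[u]\ \supseteq\ \bigcap_{u\ \mathrm{sub\text{-}action}}\mathcal{O}^\infty[u]\ \supseteq\ \bigcap_{(u_-,u_+)}\{x\mid u_-(x)=u_+(x)\},
\]
so that all three sets coincide.

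\emph{First inclusion.} Let $u$ be a (negative) discrete weak K.A.M. solution. Then $u$ is in particular a sub-action, so by Proposition \ref{proposition:ConjugatePair} it generates a conjugate pair $(u_-,u_+)$ with $u_-=u$ (since $T^{-n}[u]-n\overline{S}=u$ for all $n$), and by the lemma immediately above, $\{x\mid u_-(x)=u_+(x)\}=\mathcal{O}^\infty[u_-]=\mathcal{O}^\infty[u]$. Hence each $\mathcal{O}^\infty[u]$ on the middle term is exactly one of the sets appearing in the left-hand intersection, which is over a possibly larger family; therefore the left-hand intersection is contained in the middle one. (One should also note that the same argument applied to positive weak K.A.M. solutions gives the analogous statement, but negative ones already suffice.)

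\emph{Second inclusion.} This is trivial from the fact that every discrete weak K.A.M. solution is a sub-action: the intersection over the smaller family (weak K.A.M. solutions) contains the intersection over the larger family (all sub-actions).

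\emph{Third inclusion.} Let $u$ be an arbitrary sub-action and let $(u_-,u_+)$ be its associated conjugate pair. By the lemma immediately above, $\mathcal{O}^\infty[u]\supseteq\mathcal{O}^\infty[u_-]=\{x\mid u_-(x)=u_+(x)\}$. Thus the set $\{x\mid u_-(x)=u_+(x)\}$ for this particular conjugate pair is contained in $\mathcal{O}^\infty[u]$; intersecting over all sub-actions $u$ on the right and over all conjugate pairs on the left (each sub-action producing one such pair), we get that the right-hand intersection contains the left-hand one. The main (only) subtlety to verify carefully is that the correspondence $u\mapsto(u_-,u_+)$ in Proposition \ref{proposition:ConjugatePair} is set up so that ``ranging over all sub-actions'' and ``ranging over all conjugate pairs'' produce compatible index sets for the two intersections; since every conjugate pair arises from a sub-action (e.g.\ from $u_-$ itself) and every sub-action yields a conjugate pair, the two intersections are over essentially the same family, and the inclusion is immediate. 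Combining the three inclusions closes the cycle and proves the proposition.
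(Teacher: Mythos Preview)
Your cycle of inclusions is the right strategy, and the second and third inclusions are argued correctly. But the argument for the \emph{first} inclusion proves the wrong direction. Writing $A$, $B$, $C$ for the three intersections (over conjugate pairs, over weak K.A.M.\ solutions, over sub-actions), your displayed chain is $A\supseteq B\supseteq C\supseteq A$. In the first paragraph you start from a weak K.A.M.\ solution $u$ and observe that $\mathcal{O}^\infty[u]$ coincides with one of the sets $\{x\mid u_-(x)=u_+(x)\}$ appearing in the conjugate-pair family. This shows that the weak-K.A.M.\ family of sets sits \emph{inside} the conjugate-pair family, and hence (intersecting over a larger family gives a smaller set) that $A\subseteq B$ --- which is exactly what you then write (``the left-hand intersection is contained in the middle one''). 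But the chain needs $A\supseteq B$, and without it the cycle does not close: from $B\supseteq C\supseteq A$ together with $A\subseteq B$ you only get $A\subseteq C\subseteq B$, not equality.

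The fix is to run the first-inclusion argument the other way: start from an arbitrary conjugate pair $(u_-,u_+)$, note that $u_-$ is itself a (negative) weak K.A.M.\ solution, and apply the lemma to get $\{x\mid u_-(x)=u_+(x)\}=\mathcal{O}^\infty[u_-]$. This exhibits each conjugate-pair set as a member of the weak-K.A.M.\ family, hence $B\subseteq A$, i.e.\ $A\supseteq B$, which is what the chain needs. (In fact both inclusions hold, so the two families of sets literally coincide and $A=B$ directly; this is essentially what the paper means by ``we can directly deduce from the above lemma.'')
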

  
  Moreover, we have
  \begin{proposition}
  	$$\bigcap_{u \text{ sub-action}}\mathcal{O}^\infty[u]=\bigcap_{u \text{ sub-action}}\pi_1(\mathcal{NS}[u]).$$
  \end{proposition}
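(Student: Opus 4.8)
The plan is to prove the two-sided inclusion
\[
\bigcap_{u \text{ sub-action}}\mathcal{O}^\infty[u]=\bigcap_{u \text{ sub-action}}\pi_1(\mathcal{NS}[u])
\]
by comparing, for a fixed sub-action $u$, the set $\mathcal{O}^\infty[u]$ with $\pi_1(\mathcal{NS}[u_-])$ where $u_-$ is the negative weak K.A.M.\ solution in the conjugate pair generated by $u$. First I would establish the elementary fact that for a \emph{weak K.A.M.\ solution} $w$ (say a negative one, so $w=T^-[w]-\overline S$), one has the equality $\mathcal{O}^\infty[w]=\pi_1(\mathcal{NS}[w])$. The inclusion $\supset$ is the easy direction: if $(x,y)\in\mathcal{NS}[w]$ then $w$ is calibrated along a bi-infinite orbit through $x$ (using coercivity and the fact that $\Sigma_\pm$ are nonempty, Remark~\ref{remark:coercive}(a), one extends a calibrated pair to a calibrated sequence in both directions), and running this calibrated backward segment shows $T^{+n}\circ T^{-n}[w](x)=w(x)$ for all $n$, i.e.\ $x\in\mathcal{O}^\infty[w]$. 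For the inclusion $\subset$: if $x\in\mathcal{O}^\infty[w]$, then $x\in\mathcal{O}^1[w]$, so by the characterization $\mathcal{O}^n[u]=\Sigma_-^n[u](\mathbb{R}^d)$ together with Lemma~\ref{lemma:Sigma-andSigma+} there is $y$ with $x\in\Sigma_-[w](y)$, which unpacks to $w(y)=w(x)+S(x,y)-\overline S$ (here using $w=T^-[w]-\overline S$), i.e.\ $(x,y)\in\mathcal{NS}[w]$ and $x\in\pi_1(\mathcal{NS}[w])$.

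Next I would relate the sub-action intersection to the weak-K.A.M.\ intersection. By the already-proven proposition, $\bigcap_{u\text{ sub-action}}\mathcal{O}^\infty[u]=\bigcap_{u\text{ weak K.A.M.}}\mathcal{O}^\infty[u]$, and by the preceding paragraph this equals $\bigcap_{w\text{ weak K.A.M.}}\pi_1(\mathcal{NS}[w])$. So it remains to show $\bigcap_{w\text{ weak K.A.M.}}\pi_1(\mathcal{NS}[w])=\bigcap_{u\text{ sub-action}}\pi_1(\mathcal{NS}[u])$. One inclusion, $\supset$, is immediate since every weak K.A.M.\ solution is a sub-action. For the reverse inclusion, the key observation is monotonicity of the non-strict set under the Lax--Oleinik flow: if $u$ is a sub-action then $\mathcal{NS}[u]\subset\mathcal{NS}[T^{-}[u]-\overline S]$ — indeed, calibration $u(y)-u(x)=S(x,y)-\overline S$ together with $u\le T^-[u]-\overline S$ forces the corresponding calibration for $T^-[u]-\overline S$ by the variational inequality defining $T^-$ (one checks the infimum defining $T^-[u]-\overline S$ at $y$ is attained at $x$, pushing the identity up the flow). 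Iterating and passing to the limit gives $\pi_1(\mathcal{NS}[u])\subset\pi_1(\mathcal{NS}[u_-])$ where $u_-$ is the negative weak K.A.M.\ solution obtained from $u$; since $u_-$ ranges over all negative weak K.A.M.\ solutions as $u$ ranges over sub-actions, taking intersections yields $\bigcap_{u\text{ sub-action}}\pi_1(\mathcal{NS}[u])\supset\bigcap_{w\text{ weak K.A.M.}}\pi_1(\mathcal{NS}[w])$, completing the chain of equalities.

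The main obstacle I anticipate is the limit step in the monotonicity argument: showing that $\pi_1(\mathcal{NS}[T^{-n}[u]-n\overline S])$ behaves well as $n\to\infty$ so that $\pi_1(\mathcal{NS}[u_-])$ captures the nested intersection, and dually handling the backward solution $u_+$ if one prefers to symmetrize. Concretely, one has an increasing (in $n$) family of non-strict sets $\mathcal{NS}[T^{-n}[u]-n\overline S]$, and one needs that a point calibrated for $u_-$ — equivalently lying on a bi-infinite $u_-$-calibrated orbit — is already in $\pi_1(\mathcal{NS}[u])$ for the original $u$ when $x\in\bigcap_u\mathcal{O}^\infty[u]$; this is where compactness of $\Sigma_+^n[T^{-n}[u]](x)$ (used already in the proof of Proposition~\ref{proposition:ConjugatePair}'s companion proposition) and a diagonal/subsequence extraction enter, exactly as in the invariance proposition proved above. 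Once that convergence bookkeeping is in place, the equality follows by assembling the three displayed identities.
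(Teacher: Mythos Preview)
Your proposal contains a genuine gap in two places, and the paper's proof is in fact much shorter and takes a different route.

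\medskip
\textbf{First gap.} The claimed equality $\mathcal{O}^\infty[w]=\pi_1(\mathcal{NS}[w])$ for a negative weak K.A.M.\ solution $w$ is false in general: only $\mathcal{O}^\infty[w]\subset\pi_1(\mathcal{NS}[w])$ holds. For such $w$ one has $T^{-n}[w]=w+n\overline S$, so $\mathcal{O}^n[w]=\{x:T^{+n}[w](x)+n\overline S=w(x)\}$, and in particular $\pi_1(\mathcal{NS}[w])=\mathcal{O}^1[w]$, which strictly contains $\mathcal{O}^\infty[w]$ in general. Your ``easy'' direction $\supset$ breaks precisely at the forward extension step: a $w$-calibrated pair $(x,y)$ can be extended \emph{backward} indefinitely (using $w=T^-[w]-\overline S$), but there is no mechanism forcing $(y,y')\in\mathcal{NS}[w]$ for some $y'$; this would require $T^+[w](y)=w(y)-\overline S$, which fails off the Aubry set. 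The backward calibrated segment you construct witnesses nothing about $T^{+n}[w](x)$, which is a \emph{forward} variational quantity.

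\medskip
\textbf{Second gap.} The monotonicity $\mathcal{NS}[u]\subset\mathcal{NS}[T^-[u]-\overline S]$ is not correct: from $(x,y)\in\mathcal{NS}[u]$ you do get $T^-[u](y)=u(x)+S(x,y)$, but to conclude $(x,y)\in\mathcal{NS}[T^-[u]-\overline S]$ you would also need $T^-[u](x)=u(x)+\overline S$, which is not implied. More importantly, even granting $\pi_1(\mathcal{NS}[u])\subset\pi_1(\mathcal{NS}[u_-])$, this re-proves the \emph{trivial} inclusion $\bigcap_{\text{sub-action}}\pi_1(\mathcal{NS}[u])\subset\bigcap_{\text{weak K.A.M.}}\pi_1(\mathcal{NS}[w])$, not the reverse inclusion you need.

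\medskip
\textbf{The paper's argument.} The paper avoids both issues by working directly with the intersection over \emph{all} sub-actions. For $\subset$: if $x\in\mathcal{O}^\infty[u]$, the sandwich $u(x)\ge T^+[u](x)+\overline S\ge u_+(x)=u(x)$ forces $u(x)=T^+[u](x)+\overline S$, hence $x\in\pi_1(\mathcal{NS}[u])$. For $\supset$: if $x\in\pi_1(\mathcal{NS}[v])$ for every sub-action $v$, apply this hypothesis to $v=T^{+n}[u]+n\overline S$ (which is again a sub-action) to get $T^{+(n+1)}[u](x)+(n+1)\overline S=T^{+n}[u](x)+n\overline S$ for all $n$; by induction $u(x)=u_+(x)=\widetilde u_+(x)$, and since $\{T^{+n}\circ T^{-n}[u]\}$ is non-increasing with limit $u(x)$ and bounded above by $u(x)$, equality holds for every $n$, i.e.\ $x\in\mathcal{O}^\infty[u]$. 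The key idea you are missing is to exploit the quantifier ``for all sub-actions'' by feeding in the Lax--Oleinik iterates $T^{+n}[u]+n\overline S$ themselves.
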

  
  \begin{proof}
  	Let $u$ be a sub-action. Then we have 
  	$$u(x)\geq T^+[u](x)+\overline{S}\geq u_+(x), \quad \forall x\in\mathbb{R}^d.$$
  	If $x\in \mathcal{O}^\infty[u]$, by Proposition \ref{proposition:ConjugatePair} and the definition of $\mathcal{O}^\infty[u]$, it follows that $u_+(x)=\widetilde{u}_+(x)=u(x)$.
  	  	Thus $$u(x)=T^+[u](x)+\overline{S},$$
  	which implies that $x\in\pi_1(NS[u])$.
  	
  	Let $u$ be a sub-action. If $x\in\pi_1(\mathcal{NS}[u])$ for any sub-action $u$, then  $x\in\pi_1(\mathcal{NS}[T^{+n}[u]])$ for any $n\in\mathbb{Z}_+$. Consequently, $$u(x)=T^{+n}[u](x)+n\overline{S}$$
  	for any $n\in\mathbb{Z}_+$. This implies $u(x)=u_+(x)$. Therefore, by Proposition \ref{proposition:ConjugatePair},  $u(x)=\widetilde{u}_+(x)$ for any $n\in\mathbb{Z}_+$. That is, $x\in \mathcal{O}^\infty[u]$. 
  \end{proof}

  The following proposition gives equivalent definitions of the projected Aubry set. 
 \begin{proposition}\label{prop:defofAubry}
 	Denote by $\mathcal{A}$ the projected Aubry set. Then \begin{align*}
 		\mathcal{A}&=\bigcap_{(u_-,u_+) \text{ conjugate pair}}\{x\mid u_-(x)=u_+(x)\}\\ 
 		&=\bigcap_{u \text{ weak K.A.M.}}\mathcal{O}^\infty[u]\\ 
 		&=\bigcap_{u \text{ sub-action}}\mathcal{O}^\infty[u]\\ 
 		&=\bigcap_{u \text{ sub-action}}\pi_1(\mathcal{NS}[u])\\ 
 		&=\pi_1\left(\bigcap_{u \text{ sub-action}}\mathcal{NS}[u]\right).
 	\end{align*}
 \end{proposition}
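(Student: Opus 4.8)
The plan is to reduce everything to the \emph{last} equality in the displayed chain, because the coincidence of the first four sets,
\[
\bigcap_{(u_-,u_+)}\{x\mid u_-(x)=u_+(x)\}=\bigcap_{u\text{ weak K.A.M.}}\mathcal{O}^\infty[u]=\bigcap_{u\text{ sub-action}}\mathcal{O}^\infty[u]=\bigcap_{u\text{ sub-action}}\pi_1(\mathcal{NS}[u]),
\]
is precisely what the two propositions immediately preceding this statement assert. Recalling that one of these expressions is, by definition, the projected Aubry set $\mathcal{A}$, it then remains only to prove
\[
\bigcap_{u\text{ sub-action}}\pi_1(\mathcal{NS}[u])=\pi_1\Big(\bigcap_{u\text{ sub-action}}\mathcal{NS}[u]\Big).
\]
The inclusion $\supseteq$ is automatic, so the whole content is the reverse one: given $x$ that belongs to $\pi_1(\mathcal{NS}[u])$ for \emph{every} sub-action $u$, one must produce a single $y$ with $(x,y)\in\mathcal{NS}[u]$ for all sub-actions $u$ simultaneously.

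First I would fix such an $x$ and, for each sub-action $u$, introduce the fibre
\[
K_u:=\{y\in\mathbb{R}^d\mid u(y)-u(x)=S(x,y)-\overline{S}\},
\]
so that $y\in K_u\iff(x,y)\in\mathcal{NS}[u]$. By our standing hypothesis each $K_u$ is nonempty, and each is compact: it is closed since $S(x,\cdot)$ and $u$ are continuous, and bounded since $u$ — being continuous and $\mathbb{Z}^d$-periodic — is bounded, while $S(x,\cdot)$ is coercive by Hypothesis \ref{hypothesis:S1}, so the defining equality cannot hold for $\|y\|$ large. Thus the task becomes to show $\bigcap_u K_u\neq\emptyset$, and by compactness it suffices to verify that every finite subfamily of $\{K_u\}$ has nonempty intersection.

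The key step is the observation that a finite intersection of non-strict sets is again a non-strict set: if $u_0,\dots,u_n$ are sub-actions and $w:=\tfrac1{n+1}\sum_{i=0}^n u_i$, then $w$ is a sub-action (a convex combination of $S$-dominated functions is $S$-dominated, and $w$ is clearly continuous and $\mathbb{Z}^d$-periodic), and $\mathcal{NS}[w]=\bigcap_{i=0}^n\mathcal{NS}[u_i]$: the inclusion $\supseteq$ is clear, and conversely if $w(y)-w(x)=S(x,y)-\overline{S}$ then the average of the numbers $u_i(y)-u_i(x)$, each of which is $\le S(x,y)-\overline{S}$, equals $S(x,y)-\overline{S}$, which forces equality in every term. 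Hence $K_{u_0}\cap\cdots\cap K_{u_n}=K_w$, and $K_w\neq\emptyset$ precisely because $w$ is itself a sub-action and $x\in\pi_1(\mathcal{NS}[w])$. Applying the finite intersection property inside the fixed compact set $K_{u_0}$ (for any reference sub-action $u_0$), one gets $\bigcap_u K_u\neq\emptyset$; any $y$ in this set witnesses $x\in\pi_1\big(\bigcap_u\mathcal{NS}[u]\big)$, which finishes the proof. The only points requiring genuine care are the convex-combination identity $\mathcal{NS}[w]=\bigcap_i\mathcal{NS}[u_i]$ (and its use to check the finite intersection property) and the compactness of the $K_u$; I do not expect any deeper obstacle, since the first four equalities are already in hand from the earlier propositions and what remains is a soft compactness argument resting only on Hypothesis \ref{hypothesis:S1}.
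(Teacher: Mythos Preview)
Your argument is correct and in fact supplies a self-contained proof of the one equality the paper does \emph{not} prove internally. The paper's own proof is a one-line citation: it invokes \cite[Lemma 10.3 and Proposition 10.5]{GT11} to get
\[
\bigcap_{u \text{ sub-action}}\pi_1(\mathcal{NS}[u])=\pi_1\Big(\bigcap_{u \text{ sub-action}}\mathcal{NS}[u]\Big)=\mathcal{A},
\]
and combines this with the two preceding propositions for the remaining equalities. You instead rederive the key step---that the $\pi_1$ can be pulled outside the intersection---via a compactness/convexity argument: the fibres $K_u$ are compact by coercivity, and the finite intersection property follows from the identity $\mathcal{NS}[w]=\bigcap_i\mathcal{NS}[u_i]$ for $w$ the average of the $u_i$. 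This is exactly the content of the cited lemma in \cite{GT11}, so you have essentially reproduced its proof rather than quoted it; your route is more self-contained, the paper's is shorter.

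One small caveat: you write that ``one of these expressions is, by definition, the projected Aubry set $\mathcal{A}$'', but the paper does not actually define $\mathcal{A}$ internally---it takes the definition from \cite{GT11} and uses \cite[Proposition 10.5]{GT11} to identify it with $\pi_1\big(\bigcap_u\mathcal{NS}[u]\big)$. So strictly speaking your reduction presumes a particular choice of definition; this is harmless mathematically (all the displayed sets coincide, so any one can serve as the definition), but worth flagging if you intend your write-up to be fully self-contained.
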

 
\begin{proof}
	It follows from \cite[Lemma 10.3]{GT11} and \cite[Proposition 10.5]{GT11} that \[\bigcap_{u \text{ sub-action}}\pi_1(\mathcal{NS}[u])=\pi_1\left(\bigcap_{u \text{ sub-action}}\mathcal{NS}[u]\right)=\mathcal{A}.  \qedhere\]
\end{proof}
 	
 We list below, without proof, some fundamental properties of the projected Aubry set $\mathcal{A}$ (see for instance \cite{BerJAMS,GT11, zavidovique2023}). 
\begin{proposition}\label{prop:PropertyOfAubry}  Assume that $S$ satisfies Hypothesis \ref{hypothesis:S1} and  Hypothesis \ref{hypothesis:S2}. 
  \begin{enumerate}
    \item [(i)] $\mathcal{A}$ is  closed, non-empty and $$\mathcal{A}=\pi_2\left(\bigcap_{u \text{ is a sub-action}}\mathcal{NS}(u)\right);$$
    \item  [(ii)] $\mathcal{A}+m=\mathcal{A},\ \forall m\in\mathbb{Z}^d$;
    \item [(iii)] Any sub-action $u$ is differentiable on $\mathcal{A}$. Moreover, the differential  $\d u:\mathcal{A}\rightarrow(\mathbb{R}^d)^*$ is independent of $u$;
    \item [(iv)] For any $x_0\in\mathcal{A}$, there exists a unique sequence $\{x_n\}_{n\in\mathbb{Z}}$ which is calibrated by any sub-action $u$;
    \item [(v)] Denote the \emph{Aubry set} $\mathcal{A}^*=\{(x,p)\mid p=\d u(x),x\in\mathcal{A}\}$. Then the projection $\pi_1:\mathcal{A}^*\rightarrow\mathcal{A} $ is a bi-Lipschitz homeomorphism.
  \end{enumerate}
\end{proposition}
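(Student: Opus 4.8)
The plan is to start from the five characterizations of $\mathcal{A}$ collected in Proposition~\ref{prop:defofAubry}, abbreviate $\mathcal{N}:=\bigcap_{u\text{ sub-action}}\mathcal{NS}[u]$ so that $\mathcal{A}=\pi_1(\mathcal{N})=\bigcap_u\mathcal{O}^\infty[u]$, and dispatch the items in the order (i), (ii), (iii), (iv), (v), the last being the only serious point. For (i), closedness is soft: each $\mathcal{NS}[u]$ is closed, and on it $S(x,y)=u(y)-u(x)+\overline{S}$ is bounded, so coercivity (Hypothesis~\ref{hypothesis:S1}) confines $\|x-y\|$ and makes $\pi_1|_{\mathcal{NS}[u]}$ proper, hence $\pi_1(\mathcal{NS}[u])$ is closed and $\mathcal{A}$ is closed. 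Non-emptiness I would get by a compactness/diagonal argument: build a bi-infinite configuration calibrated by a fixed weak K.A.M.\ solution, pass to an $\omega$-recurrent point of its image in $\mathbb{T}^d$, and note that closing up the calibrated segments there forces the Peierls barrier at that point to vanish, which places it in $\mathcal{A}$. For the equality $\mathcal{A}=\pi_2(\mathcal{N})$ I would use the reflection $\check{S}(x,y):=S(y,x)$, which again satisfies Hypotheses~\ref{hypothesis:S1}--\ref{hypothesis:S2}: $u\mapsto-u$ identifies sub-actions of $S$ with those of $\check{S}$ and transposes the non-strict set, while the Peierls barrier is unchanged on the diagonal, so $\mathcal{A}(\check{S})=\mathcal{A}(S)$ and the backward projection for $S$ coincides with the forward one. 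Item (ii) is then a one-line check: diagonal-periodicity of $S$ and $\mathbb{Z}^d$-periodicity of $u$ give $(x,y)\in\mathcal{NS}[u]\iff(x+m,y+m)\in\mathcal{NS}[u]$, so $\mathcal{N}+(m,m)=\mathcal{N}$ and $\mathcal{A}+m=\mathcal{A}$.

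For (iii) and (iv) the engine is semiconcavity together with the ferromagnetic condition. Fix $x\in\mathcal{A}$; using $\mathcal{A}=\pi_1(\mathcal{N})=\pi_2(\mathcal{N})$ pick $x_+,x_-$ with $(x,x_+),(x_-,x)\in\mathcal{N}$, chosen once and for all, independent of the sub-action. For any sub-action $u$, combining the calibration equality on $(x,x_+)$ with the sub-action inequality yields the $C^1$ lower support $w\mapsto u(x)+S(x,x_+)-S(w,x_+)$ touching the graph of $u$ at $x$; since $S$ is $C^1$ this gives $-\partial_1S(x,x_+)\in D^-u(x)$, and a semiconcave function with nonempty subdifferential at a point is differentiable there (Appendix~\ref{sec:appendixA}), so $u$ is differentiable at $x$ with $\d u(x)=-\partial_1S(x,x_+)$, and symmetrically $\d u(x)=\partial_2S(x_-,x)$ from the other pair. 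Since $x_\pm$ do not depend on $u$, the differential on $\mathcal{A}$ is the same for all sub-actions; this is (iii). Item (iv) follows: iterating $\mathcal{A}=\pi_1(\mathcal{N})=\pi_2(\mathcal{N})$ forward and backward produces a bi-infinite configuration through $x_0$ calibrated by every sub-action, and uniqueness comes from the ferromagnetic hypothesis, which makes $y\mapsto\partial_1S(x_0,y)$ and $x\mapsto\partial_2S(x,x_0)$ injective, so the identity $\d u(x_0)=-\partial_1S(x_0,x_1)=\partial_2S(x_{-1},x_0)$ pins down both the successor and the predecessor.

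The genuine work is (v), the discrete Mather graph theorem, and this is the step I expect to be the main obstacle. Here $\pi_1:\mathcal{A}^*\to\mathcal{A}$ is manifestly a $1$-Lipschitz continuous bijection (well-defined by (iii)), so everything reduces to showing that $x\mapsto\d u(x)$ is Lipschitz on $\mathcal{A}$. The route I would take is to invoke the existence of a $C^{1,1}$ sub-action $w$ — Bernard's regularization \cite{BerJAMS} in general dimension, or Theorem~\ref{thm:regularization} applied to a weak K.A.M.\ solution in the twist case — and observe that by (iii) $\d w$ agrees with $\d u$ on $\mathcal{A}$; since $\d w$ is globally Lipschitz, so is its restriction, whence $\pi_1^{-1}$ is Lipschitz and $\pi_1$ is a bi-Lipschitz homeomorphism. (In dimension one, with a linear semiconcavity modulus, one can bypass \cite{BerJAMS}: $u_-$ is semiconcave and $u_+$ semiconvex, they coincide on $\mathcal{A}$ with common differential there, and the two quadratic estimates combine directly to a Lipschitz bound on the scalar $\d u$.) This is precisely the point whose honest treatment is the substance of \cite{BerJAMS,GT11,zavidovique2023}, which is why the statement is only quoted here.
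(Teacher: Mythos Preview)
The paper does not prove this proposition at all: the sentence immediately preceding it reads ``We list below, without proof, some fundamental properties of the projected Aubry set $\mathcal{A}$ (see for instance \cite{BerJAMS,GT11,zavidovique2023}).'' So there is no ``paper's approach'' to compare against; your sketch is therefore not a comparison but an attempted self-contained argument, and you correctly flag at the end that the substantive part (v) is exactly what those references supply.

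Your outline is largely sound, and in particular the reflection trick for $\mathcal{A}=\pi_2(\mathcal{N})$, the iteration argument for (iv), and the reduction of (v) to the existence of a $C^{1,1}$ sub-action are all the standard moves. There is, however, one genuine slip in your treatment of (iii). You write ``a semiconcave function with nonempty subdifferential at a point is differentiable there'', but an arbitrary sub-action $u\in C^0(\mathbb{T}^d)$ is \emph{not} assumed semiconcave, and in this discrete setting nothing forces it to be (a small perturbation of a weak K.A.M.\ solution that stays below the sub-action threshold can destroy semiconcavity). So the appeal to semiconcavity of $u$ is unjustified.

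The fix is already contained in what you wrote: from $(x,x_+)\in\mathcal{N}$ you obtain the lower $C^1$ support, giving $-\partial_1S(x,x_+)\in\nabla^-u(x)$, while from $(x_-,x)\in\mathcal{N}$ the analogous computation produces an \emph{upper} $C^1$ support $w\mapsto u(x)+S(x_-,w)-S(x_-,x)$, hence $\partial_2S(x_-,x)\in\nabla^+u(x)$. Now both $\nabla^+u(x)$ and $\nabla^-u(x)$ are nonempty, and Proposition~\ref{prop:PropertyOfSemiconcave}(iii) yields differentiability of $u$ at $x$ directly, with $\d u(x)=-\partial_1S(x,x_+)=\partial_2S(x_-,x)$, independent of $u$. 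No semiconcavity of $u$ is needed; only that of $S$ (to furnish the $C^1$ supports) is used.
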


\subsection{Regularity results in discrete weak K.A.M. theory}\label{subsection:Regularity}
In Section \ref{subsection:Regularity}, we operate under the assumption that the function  $S$  satisfies both Hypothesis \ref{hypothesis:S1} and Hypothesis \ref{hypothesis:S2}.

The image $T^-(B(\mathbb{T}^d))$ (resp. $T^+(B(\mathbb{T}^d))$) constitutes a family  of semiconcave (resp. semiconvex)  functions  with linear modulus and the same constant, as demonstrated in \cite[Proposition 2.4.10]{zavidovique2023}. 
In the next,  we will characterize the regularity of $u, T^-[u], T^+[u]$ in the following Lemmas~\ref{lemma:RegularityOfSubaction} and \ref{lemma:WellDefined}  whose proofs are given in the appendix.

\begin{lemma}\label{lemma:RegularityOfSubaction}
  Assume that $S$ satisfies Hypothesis \ref{hypothesis:S1} and Hypothesis \ref{hypothesis:S2}.  Let $u\in C^0(\mathbb{T}^d)$. \begin{enumerate}
      \item [(a)] If $x\in \Sigma_-[u](y)$ for some $(x,y)\in\mathbb{R}^{d}\times\mathbb{R}^{d}$, then \begin{align*}
          -\partial_1S(x,y)&\in \nabla^-u(x);\\
          \partial_2S(x,y)&\in\nabla^+T^-[u](y);
      \end{align*}
      \item [(b)] if $y\in \Sigma_+[u](x)$ for some $(x,y)\in\mathbb{R}^{d}\times\mathbb{R}^{d}$, then \begin{align*}
          -\partial_1S(x,y)&\in \nabla^-T^+[u](x);\\ 
          \partial_2S(x,y)&\in\nabla^+u(y); 
      \end{align*}
  \end{enumerate}
\end{lemma}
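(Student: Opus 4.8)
The plan is to prove part (a) and then observe that part (b) follows by the evident symmetry $S(x,y)\leftrightarrow S(y,x)$, $T^-\leftrightarrow T^+$, $\nabla^-\leftrightarrow\nabla^+$, $\partial_1\leftrightarrow-\partial_2$ (this is exactly the duality already used in Lemma \ref{lemma:Sigma-andSigma+}). So I focus on (a). Fix $(x,y)$ with $x\in\Sigma_-[u](y)$, meaning $T^-[u](y)=u(x)+S(x,y)$ and $u(z)+S(z,y)\ge u(x)+S(x,y)$ for all $z$. The two assertions $-\partial_1S(x,y)\in\nabla^-u(x)$ and $\partial_2S(x,y)\in\nabla^+T^-[u](y)$ are each a statement about a one-sided differential, so each will be proved by exhibiting a suitable test function: a $C^1$ function touching $u$ from below at $x$ with the prescribed gradient for the first, and a $C^1$ function touching $T^-[u]$ from above at $y$ with the prescribed gradient for the second.

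For the first inclusion: define $g(z):=T^-[u](y)-S(z,y)$. Since $S$ is $C^1$ (Hypothesis \ref{hypothesis:S2}), $g$ is $C^1$, and the minimality condition above says exactly $u(z)\ge g(z)$ for all $z$ with equality at $z=x$. Hence $g$ is a $C^1$ lower test function for $u$ at $x$, so $\nabla g(x)=-\partial_1S(x,y)\in\nabla^-u(x)$; here I should quote the appendix characterization of $\nabla^-u(x)$ as the set of $C^1$-lower-support gradients (equivalently the subdifferential), which is where the semiconcavity of $S$ — hence the well-behavedness of $g$ — is used to make this rigorous. For the second inclusion: define $h(w):=u(x)+S(x,w)$, again $C^1$. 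For every $w$, $T^-[u](w)=\inf_z\{u(z)+S(z,w)\}\le u(x)+S(x,w)=h(w)$, with equality at $w=y$ because $x\in\Sigma_-[u](y)$. So $h$ is a $C^1$ upper test function for $T^-[u]$ at $y$, giving $\nabla h(y)=\partial_2S(x,y)\in\nabla^+T^-[u](y)$. Combined with Remark \ref{remark:LaxOleinik}(a) (or the cited semiconcavity of $T^-[u]$), which guarantees $\nabla^+T^-[u](y)\ne\emptyset$ and consists precisely of the superdifferentials, this is the claim.

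The only genuinely delicate point — the "main obstacle" — is making sure the one-sided-differential bookkeeping is airtight: that "$C^1$ function below $u$ with equality at $x$" really places its gradient in $\nabla^-u(x)$ (and dually for $\nabla^+$), and that these sets are nonempty at the relevant points. This is not deep but it is precisely the place where the appendix facts on semiconcave functions (that $u$ and $T^-[u]$ have the expected sub/super-differential structure, and that $T^-[u]$ is semiconcave so $\nabla^+T^-[u]$ is nonempty everywhere) must be invoked cleanly; everything else is a two-line support-function argument. I would write the proof in the appendix as two short paragraphs mirroring the two displays in the statement, then one sentence deducing (b) from (a) by the symmetry of the setup.
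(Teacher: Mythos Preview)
Your proposal is correct and follows essentially the same support-function approach as the paper: for each inclusion you exhibit a $C^1$ function touching $u$ (resp.\ $T^-[u]$) from below (resp.\ above) and read off the gradient. The only cosmetic difference is that for the first inclusion the paper invokes the local semiconcavity of $S(\cdot,y)$ to write a quadratic upper bound and then applies Proposition~\ref{prop:PropertyOfSemiconcave}(i), whereas your $C^1$ lower-support argument needs only that $S$ is $C^1$ and is justified by Proposition~\ref{prop:PropertyOfSemiconcave}(ii)--(iii); your parenthetical remark that semiconcavity of $S$ is ``where this is made rigorous'' is therefore unnecessary.
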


\begin{lemma}\label{lemma:injective}
	If $v\in SC(\mathbb{T}^d)$, then $\Sigma_-[v]$ is injective. In other words, if $\Sigma_-[v](y_1)\cap \Sigma_-[v](y_2)\ne\emptyset$, then $y_1=y_2$.
\end{lemma}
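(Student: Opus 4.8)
The plan is to combine the first-order information extracted in Lemma~\ref{lemma:RegularityOfSubaction}(a) with the structural rigidity of the subdifferential of a semiconcave function and the ferromagnetic hypothesis on $S$. Suppose $\Sigma_-[v](y_1)\cap\Sigma_-[v](y_2)\neq\emptyset$ and fix a point $x$ in this intersection, so that $x\in\Sigma_-[v](y_1)$ and $x\in\Sigma_-[v](y_2)$ simultaneously.

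Applying Lemma~\ref{lemma:RegularityOfSubaction}(a) to each of the two relations gives $-\partial_1S(x,y_1)\in\nabla^-v(x)$ and $-\partial_1S(x,y_2)\in\nabla^-v(x)$. The key point is then that, since $v$ is semiconcave, the subdifferential $\nabla^-v(x)$ is either empty or reduces to a single point (namely, $v$ is differentiable at $x$ and $\nabla^-v(x)=\{\d v(x)\}$ — this is one of the fundamental properties of semiconcave functions recalled in Appendix~\ref{sec:appendixA}). Here $\nabla^-v(x)$ is nonempty, as it contains $-\partial_1S(x,y_1)$, hence it is a singleton, which forces $\partial_1S(x,y_1)=\partial_1S(x,y_2)$.

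Finally, by Hypothesis~\ref{hypothesis:S2}(2) the map $y\mapsto\partial_1S(x,y)$ is a homeomorphism of $\mathbb{R}^d$ onto $(\mathbb{R}^d)^*$, in particular injective; together with the previous equality this yields $y_1=y_2$, which is exactly the assertion. I do not anticipate a genuine obstacle here: the only ingredient that needs care is invoking the correct semiconcavity fact (that a nonempty subdifferential of a semiconcave function is a singleton), and this is standard and already listed in the appendix. One could also spell out the two applications of Lemma~\ref{lemma:RegularityOfSubaction}(a) slightly more, noting that the minimizing character of $x$ for both $y_1$ and $y_2$ is what legitimizes using that lemma twice.
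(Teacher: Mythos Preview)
Your proposal is correct and follows essentially the same route as the paper's proof: apply Lemma~\ref{lemma:RegularityOfSubaction}(a) twice to place both $-\partial_1S(x,y_1)$ and $-\partial_1S(x,y_2)$ in $\nabla^-v(x)$, use semiconcavity of $v$ (via Proposition~\ref{prop:PropertyOfSemiconcave}(iii), since $\nabla^+v(x)\neq\emptyset$) to force $\nabla^-v(x)$ to be a singleton, and conclude by the injectivity of $y\mapsto\partial_1S(x,y)$ from Hypothesis~\ref{hypothesis:S2}(2). The only cosmetic difference is that the paper spells out the intermediate step ``$\nabla^+v(x)$ nonempty $\Rightarrow$ $v$ differentiable at $x$'' explicitly, whereas you package it as ``the subdifferential of a semiconcave function is empty or a singleton''; both amount to the same invocation of Proposition~\ref{prop:PropertyOfSemiconcave}(iii).
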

\begin{proof}
	Suppose that $x\in \Sigma_-[v](y_1)\cap \Sigma_-[v](y_2)$. By item (a) of Lemma \ref{lemma:RegularityOfSubaction}, we have $$\{-\partial_1S(x,y_1),-\partial_1S(x,y_1)\}\subset\nabla^-v(x).$$
	Since $v$ is semiconcave, $\nabla^+ v(x)$ is nonempty. Then, by item (iii) of Proposition \ref{prop:PropertyOfSemiconcave}, $v$ is differentiable at $x$, and $$\d v(x)=-\partial_1S(x,y_1)=-\partial_1S(x,y_2).$$
	Since the map $y\mapsto \partial_1S(x,y)$ is a homeomorphism (see item (2) of Hypothesis \ref{hypothesis:S2}), it follows that $y_1=y_2$.
\end{proof}

\begin{lemma}\label{lemma:visC11}
 If $v\in SC(\mathbb{T}^d)$, then $v$ is $C^{1,1}$ on $\mathcal{O}[v]$. In other words, $\d v(x)$ exists for all $x\in \mathcal{O}[v]$ and there exists $K>0$ such that \begin{align*}
    \|\d v(x_1)-\d v(x_2)\|\leq K\|x_1-x_2\|, \quad \forall x_1,x_2\in \mathcal{O}[v].
  \end{align*}
  Moreover, $T^+\circ T^-[v]$ takes the same values and derivatives as $v$ (so is also $C^{1,1}$) on $\mathcal{O}[v]$.
\end{lemma}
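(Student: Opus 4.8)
The plan is to exhibit $v$, near $\mathcal{O}[v]$, as a semiconcave function touching the semiconvex function $g:=T^{+}\circ T^{-}[v]$ from above, with contact exactly on $\mathcal{O}[v]$, and then to run the standard ``semiconcave meets semiconvex'' regularity mechanism. By \eqref{eq:T+T-nondecreasing} one has $g\le v$ on all of $\mathbb{R}^{d}$, with equality precisely on $\mathcal{O}[v]$. Since $v\in SC(\mathbb{T}^{d})$ it is semiconcave with a linear modulus, say with constant $C$; and since $T^{-}[v]$ is bounded, $g=T^{+}[T^{-}[v]]$ lies in $T^{+}(B(\mathbb{T}^{d}))$ and is therefore semiconvex with a linear modulus, say with constant $C'$ (the uniform statement recalled just before Lemma~\ref{lemma:RegularityOfSubaction}, i.e. \cite[Proposition~2.4.10]{zavidovique2023}). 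So around each point of $\mathcal{O}[v]$ the semiconcave function $v$ lies above the semiconvex function $g$, and the two agree there.

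First I would establish pointwise differentiability on $\mathcal{O}[v]$ together with $\d g=\d v$ there. For $x\in\mathcal{O}[v]$, the set $\nabla^{-}g(x)$ is non-empty because $g$ is semiconvex, and $\nabla^{-}g(x)\subset\nabla^{-}v(x)$ because $v\ge g$ with $v(x)=g(x)$; on the other hand $\nabla^{+}v(x)\ne\emptyset$ because $v$ is semiconcave. By item (iii) of Proposition~\ref{prop:PropertyOfSemiconcave}, $v$ is differentiable at $x$. (Alternatively this differentiability is exactly the argument of Lemma~\ref{lemma:injective}, using $\mathcal{O}[v]=\Sigma_{-}[v](\mathbb{R}^{d})$ and item (a) of Lemma~\ref{lemma:RegularityOfSubaction}.) The symmetric comparison, $\nabla^{+}v(x)\subset\nabla^{+}g(x)$ and $\nabla^{-}g(x)\ne\emptyset$, shows $g$ is differentiable at $x$; and since $v-g\ge 0$ attains its minimum value $0$ at $x$ with both functions differentiable there, $\d g(x)=\d v(x)$.

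The core step is the Lipschitz bound. Fix $x_{1},x_{2}\in\mathcal{O}[v]$ and set $p_{i}:=\d v(x_{i})=\d g(x_{i})$. Using the linear-modulus inequalities $v(z)\le v(x_{1})+\langle p_{1},z-x_{1}\rangle+C\|z-x_{1}\|^{2}$ and $g(z)\ge g(x_{2})+\langle p_{2},z-x_{2}\rangle-C'\|z-x_{2}\|^{2}$, valid for all $z\in\mathbb{R}^{d}$, together with $v\ge g$ on $\mathbb{R}^{d}$ and the contact identities $v(x_{1})=g(x_{1})$, $v(x_{2})=g(x_{2})$, one finds that the convex quadratic
\[
z\longmapsto C\|z-x_{1}\|^{2}+C'\|z-x_{2}\|^{2}+\langle p_{1}-p_{2},z\rangle+\big(v(x_{1})-\langle p_{1},x_{1}\rangle-g(x_{2})+\langle p_{2},x_{2}\rangle\big)
\]
is non-negative everywhere. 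Evaluating it at its explicit minimizer, and estimating the constant term by using the contact once more (semiconcavity of $v$ at $x_{2}$ gives an upper, semiconvexity of $g$ a lower bound for $g(x_{1})-g(x_{2})$), yields $\|p_{1}-p_{2}\|\le K\|x_{1}-x_{2}\|$ with $K$ depending only on $C$ and $C'$ (with the above normalization, $K=2(C+2C')$ works out). This is exactly ``$v$ is $C^{1,1}$ on $\mathcal{O}[v]$''. The ``Moreover'' part is then immediate: $T^{+}\circ T^{-}[v]=v$ on $\mathcal{O}[v]$ by the definition of $\mathcal{O}[v]$, their derivatives coincide on $\mathcal{O}[v]$ by the previous paragraph, and hence $T^{+}\circ T^{-}[v]$ is $C^{1,1}$ on $\mathcal{O}[v]$ as well.

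I expect the main obstacle to be this Lipschitz estimate rather than bare differentiability: merely summing the semiconcavity inequalities for $v$ at $x_{1}$ and $x_{2}$ only produces the one-sided bound $\langle p_{1}-p_{2},x_{1}-x_{2}\rangle\le 2C\|x_{1}-x_{2}\|^{2}$, which controls monotonicity but not the modulus of continuity of $x\mapsto\d v(x)$. The two-sided sandwich $g\le v$ with equality on $\mathcal{O}[v]$ is genuinely needed here, and one must be slightly careful that the constants $C,C'$ can be taken uniform over $\mathbb{R}^{d}$ — which is guaranteed by $\mathbb{Z}^{d}$-periodicity together with the uniform linear-modulus statement for $T^{+}(B(\mathbb{T}^{d}))$.
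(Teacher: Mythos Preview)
Your proposal is correct and follows the same strategy as the paper: both exhibit the semiconcave $v$ touching the semiconvex $g:=T^{+}\circ T^{-}[v]$ from above with contact set $\mathcal{O}[v]$, deduce pointwise differentiability with $\d v=\d g$ there via Proposition~\ref{prop:PropertyOfSemiconcave}(ii)--(iii), and then extract the Lipschitz bound from the two quadratic inequalities. The only substantive difference is in this last step. The paper restricts both points to $\mathcal{O}[v]$, combines the semiconcave upper bound for $v$ with the semiconvex lower bound for $g$ (transported via $v=g$ and $\d v=\d g$ on $\mathcal{O}[v]$) to obtain the two-sided estimate $-C'\|x_1-x_2\|^2\le (\d v(x_1)-\d v(x_2))(x_1-x_2)\le C\|x_1-x_2\|^2$, and then reads off the norm bound; this final implication is immediate when $d=1$ (the paper's intended application) but is not literally valid in higher dimension. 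Your variant instead uses $v\ge g$ at \emph{every} $z\in\mathbb{R}^d$, so that the resulting quadratic in $z$ is globally nonnegative and its minimizer yields the full norm bound $\|p_1-p_2\|\le K\|x_1-x_2\|$ directly in any dimension. So your route is marginally more robust, at the price of a slightly more involved computation; conceptually the two arguments are the same.
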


\begin{proof}
	For any $x\in \mathcal{O}[v]$,  we have \begin{equation}\label{eq:T+T-v=v}
		T^+\circ T^-[v](x)=v(x).
	\end{equation} Since $T^+\circ T^-[v]\leq v$, item(ii) of Proposition \ref{prop:PropertyOfSemiconcave} implies that $$\nabla^+T^+\circ T^-[v](x)\supset \nabla^+v(x)\quad \text{and}\quad \nabla^-T^+\circ T^-[v](x)\subset \nabla^-v(x).$$
	Because $T^+\circ T^-[v]$ is semiconvex and $v$ is semiconcave, all four superdifferentials and subdifferentials are nonempty. Then by item (iii) of Proposition \ref{prop:PropertyOfSemiconcave}, both $T^+\circ T^-[v]$ and $v$ are differentiable at $x$ and \begin{equation}\label{eq:dT+T-v=dv}
		\d T^+\circ T^-[v](x)=\d v (x).
	\end{equation}
	Since $v$ is a semiconcave function, item (i) of Proposition \ref{prop:PropertyOfSemiconcave} guarantees the existence of a constant $C\geq 0$ such that \begin{align*}
    v(x_1)-v(x_2)- \d v(x_2)(x_1-x_2)\leq \frac{C}{2}|x_1-x_2|^2, \quad \forall x_1, x_2 \in \mathcal{O}[v].
\end{align*} On the other hand, because $T^+\circ T^-[v]$ is semiconvex, there exists $C'\geq 0$ such that \begin{align*}
  T^+\circ T^-[v](x_1)-T^+\circ T^-[v](x_2)-\d T^+\circ  T^-[v]&(x_2)(x_1-x_2)\\ 
  & \geq -\frac{C'}{2}\|x_1-x_2\|^2, \quad \forall x_1, x_2 \in \mathcal{O}[v].
\end{align*}
Substituting equations \eqref{eq:T+T-v=v}  and \eqref{eq:dT+T-v=dv}, we  replace $T^+\circ T^-[v]$ and $\d T^+\circ T^-[v]$ with $v$ and  $\d v$ respectively. This gives \begin{align*}
  v(x_1)-v(x_2)-\d v(x_2)(x_1-x_2)\geq -\frac{C'}{2}\|x_1-x_2\|^2, \quad \forall x_1, x_2 \in \mathcal{O}[v].
\end{align*} Combining these inequalities, we obtain \begin{align*}
  -\frac{C'}{2}\|x_1-x_2\|^2\leq v(x_1)-v(x_2)- \d v(x_2)(x_1-x_2)\leq \frac{C}{2}\|x_1-x_2\|^2, \quad \forall x_1,x_2\in\mathcal{O}[v].
\end{align*}
Exchanging the roles of $x_1$ and $x_2$, we get \begin{align*}
  -\frac{C'}{2}\|x_1-x_2\|^2\leq v(x_2)-v(x_1)- \d v(x_1)(x_2-x_1)\leq \frac{C}{2}\|x_1-x_2\|^2, \quad \forall x_1,x_2\in\mathcal{O}[v].
\end{align*}
Adding the two inequalities yields \begin{align*}
  -C'\|x_1-x_2\|^2\leq (\d v (x_1)-\d v(x_2))(x_1-x_2)\leq C\|x_1-x_2\|^2, \quad \forall x_1,x_2\in\mathcal{O}[v].
\end{align*}
If we let $K=\max\{C,C'\}$, then 
\[
  \|\d v(x_1)-\d v(x_2)\|\leq K\|x_1-x_2\|, \quad \forall x_1,x_2\in\mathcal{O}[v].    \qedhere
\]
\end{proof}

\begin{lemma}\label{lemma:WellDefined}
  Assume that $S$ satisfies Hypothesis \ref{hypothesis:S1} and Hypothesis \ref{hypothesis:S2}.  Let $u\in C^0(\mathbb{T}^d)$.
  \begin{enumerate}
    \item [(a)] Let $y\in\mathbb{R}^d$. Then  there exists a unique $x\in\mathbb{R}^d$ such that  $T^-[u](y)=u(x)+S(x,y)$ if and only if $T^-[u]$ is differentiable at $y$. 
    \item [(b)] Let $x\in\mathbb{R}^d$. Then there exists a unique $y\in\mathbb{R}^d$ such that  $T^+[u](x)=u(y)-S(x,y)$  if and only if $T^+[u]$ is differentiable at $x$.
  \end{enumerate} 
\end{lemma}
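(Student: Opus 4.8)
The plan is to prove both statements symmetrically, so I will focus on (a); statement (b) follows by the obvious reflection $S(x,y) \leftrightarrow S(y,x)$ together with $\nabla^+ \leftrightarrow \nabla^-$. First I would establish the ``if'' direction. Suppose $T^-[u]$ is differentiable at $y$, and suppose for contradiction that there are two distinct minimizers $x_1 \ne x_2$ with $T^-[u](y) = u(x_i) + S(x_i, y)$, i.e. $x_i \in \Sigma_-[u](y)$ for $i=1,2$. By item (a) of Lemma~\ref{lemma:RegularityOfSubaction}, we get $\partial_2 S(x_i, y) \in \nabla^+ T^-[u](y)$ for $i = 1, 2$. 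Since $T^-[u]$ is differentiable at $y$, the superdifferential $\nabla^+ T^-[u](y)$ is the singleton $\{\d T^-[u](y)\}$, hence $\partial_2 S(x_1, y) = \partial_2 S(x_2, y)$. But by the ferromagnetic property (item (2) of Hypothesis~\ref{hypothesis:S2}), the map $x \mapsto \partial_2 S(x, y)$ is a homeomorphism of $\mathbb{R}^d$ onto $(\mathbb{R}^d)^*$, in particular injective, forcing $x_1 = x_2$, a contradiction. So the minimizer is unique.

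For the ``only if'' direction, suppose there is a unique $x \in \mathbb{R}^d$ attaining the infimum, i.e. $\Sigma_-[u](y) = \{x\}$. The function $T^-[u]$ is semiconcave (it lies in $T^-(B(\mathbb{T}^d))$, which by \cite[Proposition 2.4.10]{zavidovique2023} consists of functions semiconcave with linear modulus), so $\nabla^+ T^-[u](y)$ is a nonempty compact convex set, and by item (iii) of Proposition~\ref{prop:PropertyOfSemiconcave} it suffices to show that $\nabla^- T^-[u](y)$ is nonempty — equivalently, to produce a single element of the superdifferential and show the superdifferential is a singleton. The key point is that every element of $\nabla^+ T^-[u](y)$ arises as $\partial_2 S(x', y)$ for some minimizer $x' \in \Sigma_-[u](y)$: indeed, a standard argument shows that for a function defined as an infimum of a family $u(x') + S(x', \cdot)$ over $x'$, with $S$ of class $C^1$, the superdifferential at $y$ is contained in the closed convex hull of $\{\partial_2 S(x', y) : x' \in \Sigma_-[u](y)\}$ (this is Danskin-type reasoning; alternatively it is the content of the proof of Lemma~\ref{lemma:RegularityOfSubaction} in the appendix, giving $\partial_2 S(x,y) \in \nabla^+ T^-[u](y)$ for each minimizer). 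With $\Sigma_-[u](y) = \{x\}$ a single point, this closed convex hull is the single point $\partial_2 S(x, y)$, so $\nabla^+ T^-[u](y) = \{\partial_2 S(x, y)\}$ is a singleton. A semiconcave function whose superdifferential at a point is a singleton is differentiable there (item (iii) of Proposition~\ref{prop:PropertyOfSemiconcave}), which gives the claim.

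The main obstacle is the ``only if'' direction, and specifically the assertion that the superdifferential $\nabla^+ T^-[u](y)$ is exactly recovered from the minimizers — i.e. that uniqueness of the minimizer forces the superdifferential to be a singleton rather than merely containing $\partial_2 S(x,y)$. Lemma~\ref{lemma:RegularityOfSubaction}(a) only gives the inclusion $\partial_2 S(x,y) \in \nabla^+ T^-[u](y)$ for each minimizer, which by itself is not enough; one needs the reverse containment $\nabla^+ T^-[u](y) \subseteq \overline{\operatorname{conv}}\{\partial_2 S(x',y) : x' \in \Sigma_-[u](y)\}$. This is where coercivity of $S$ (Hypothesis~\ref{hypothesis:S1}) enters, guaranteeing that the infimum is attained on a compact set and that near-minimizers cluster at genuine minimizers, so a limiting/compactness argument (take $y_k \to y$ with $T^-[u]$ differentiable at $y_k$, extract convergent minimizers $x_k \to x$, pass to the limit in $\d T^-[u](y_k) = \partial_2 S(x_k, y_k)$) identifies all reachable gradients as coming from minimizers. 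I would carry this compactness argument out carefully as the technical heart of the proof, then note that since $\Sigma_-[u](y)$ is the singleton $\{x\}$, all such limits equal $\partial_2 S(x,y)$, pinning down the whole superdifferential. The remaining steps — semiconcavity of $T^-[u]$, the singleton-superdifferential criterion for differentiability, and the reflection argument for (b) — are routine given the cited results.
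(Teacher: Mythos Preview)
Your proposal is correct, and the ``if'' direction is essentially identical to the paper's argument. For the ``only if'' direction the two approaches diverge in the technical execution, though they share the same compactness core (minimizers $x_k$ at nearby points $y_k \to y$ converge to the unique minimizer $x$).

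The paper goes directly for the \emph{subdifferential}: from $T^-[u](y_k) = u(x_k) + S(x_k,y_k) \geq T^-[u](y) + \bigl(S(x_k,y_k)-S(x_k,y)\bigr)$ it extracts the one-line estimate $T^-[u](y_k) \geq T^-[u](y) + \partial_2 S(x_k,y)(y_k-y) + o(\|y_k-y\|)$, giving $\partial_2 S(x,y) \in \nabla^- T^-[u](y)$. Differentiability then follows from Proposition~\ref{prop:PropertyOfSemiconcave}(iii) exactly as stated (both $\nabla^+$ and $\nabla^-$ nonempty). Your route instead pins down the \emph{superdifferential} by computing the reachable gradients $\nabla^* T^-[u](y) = \{\partial_2 S(x,y)\}$ and invoking the identity $\nabla^+ = \operatorname{conv}(\nabla^*)$ for semiconcave functions. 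This is fine, but note that neither this identity nor the ``singleton superdifferential $\Rightarrow$ differentiable'' criterion you attribute to Proposition~\ref{prop:PropertyOfSemiconcave}(iii) is actually among the items listed there --- they are standard (Cannarsa--Sinestrari) but would need to be imported separately. The paper's argument is thus more self-contained relative to its own appendix, while yours leans on slightly heavier off-the-shelf semiconcavity machinery.
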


Note that a version of Lemma~\ref{lemma:WellDefined} for discrete weak K.A.M. solutions could be referred in \cite{Arnaud16}.

\begin{lemma}\label{lemma:Sigma+Lip}
	If $u=T^-[v]$, $v\in SC(\mathbb{T}^d)$ , then $\Sigma_+[u]$ is a continuous mapping on $ \mathcal{O}[v]$. Moreover, if the discrete standard map $F$ is locally Lipschitz  continuous, then   $\Sigma_+[u]$ is Lipschitz  continuous  on $ \mathcal{O}[v]$.
\end{lemma}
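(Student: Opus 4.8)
\textbf{Proof proposal for Lemma \ref{lemma:Sigma+Lip}.}

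The plan is to use the bijection furnished by Lemma \ref{lemma:Sigma-andSigma+} to transfer information between $\Sigma_+[u]$ and $\Sigma_-[v]$, and then to exploit the regularity results already established in this section. First I would observe that, since $u = T^-[v]$, for any $x \in \mathcal{O}[v]$ one has $T^+\circ T^-[v](x)=v(x)$, so by Lemma \ref{lemma:Sigma-andSigma+} the relation $y\in\Sigma_+[u](x)$ is equivalent to $x\in\Sigma_-[v](y)$. Combined with Lemma \ref{lemma:injective} (the map $\Sigma_-[v]$ is injective because $v$ is semiconcave) this shows that $\Sigma_+[u]|_{\mathcal{O}[v]}$ is single-valued: if $y_1,y_2\in\Sigma_+[u](x)$ then $x\in\Sigma_-[v](y_1)\cap\Sigma_-[v](y_2)$, forcing $y_1=y_2$. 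So on $\mathcal{O}[v]$ we genuinely have a map $x\mapsto\Sigma_+[u](x)=:y(x)$.

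Next I would identify $\Sigma_+[u]$ in terms of $F$. By Lemma \ref{lemma:RegularityOfSubaction}(b), if $y\in\Sigma_+[u](x)$ then $-\partial_1 S(x,y)\in\nabla^- T^+[u](x)$; and on $\mathcal{O}[v]$, Lemma \ref{lemma:visC11} tells us $v=T^+\circ T^-[u]=T^+[u]$ is in fact $C^{1,1}$ there, with $\d v(x)=\d(T^+[u])(x)$, so $-\partial_1 S(x,y(x)) = \d v(x)$. Hence $(x,y(x))$ is characterized by $p := \d v(x)$ and the defining relation $p=-\partial_1 S(x,y)$ of the discrete standard map $F$ from Remark \ref{remark:F}; that is, writing $F(x,p)=(y,p')$ we get $y(x)=\pi_1\big(F(x,\d v(x))\big)$. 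Equivalently, $\Sigma_+[u](x)=\pi_1\circ F\circ(\mathrm{id},\d v)(x)$ for $x\in\mathcal{O}[v]$. Since $\mathrm{id}$ is continuous, $\d v$ is continuous on $\mathcal{O}[v]$ (Lemma \ref{lemma:visC11} gives it is even Lipschitz there), $F$ is a homeomorphism (Remark \ref{remark:F}), and $\pi_1$ is continuous, the composition is continuous on $\mathcal{O}[v]$; this proves the first assertion. For the second assertion, if $F$ is in addition locally Lipschitz, then since $\d v$ is Lipschitz on $\mathcal{O}[v]$ and $(\mathrm{id},\d v)$ maps the bounded-modulo-$\mathbb{Z}$ set $\mathcal{O}[v]$ into a set on which $F$ is Lipschitz (using periodicity to reduce to a compact fundamental-domain neighbourhood), the composition $\pi_1\circ F\circ(\mathrm{id},\d v)$ is Lipschitz on $\mathcal{O}[v]$.

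The main obstacle I anticipate is the bookkeeping around the precise statement ``$-\partial_1 S(x,y)\in\nabla^- T^+[u](x)$ equals the genuine derivative $\d v(x)$ on $\mathcal{O}[v]$'': one must check that on $\mathcal{O}[v]$ the functions $v$, $T^-[v]=u$ composed back by $T^+$, and $T^+[u]$ agree to first order, which is exactly the content of Lemma \ref{lemma:visC11} (equations of the type $T^+\circ T^-[v]=v$ and $\d T^+\circ T^-[v]=\d v$ there), together with the observation $T^+[u]=T^+\circ T^-[v]$. A secondary, purely technical point is passing from the \emph{local} Lipschitz continuity of $F$ to a \emph{uniform} Lipschitz estimate on $\mathcal{O}[v]$: this follows because $\mathcal{O}[v]+\mathbb{Z}^d=\mathcal{O}[v]$ (it is the zero set of the $\mathbb{Z}^d$-periodic function $v-T^+\circ T^-[v]$), so one only needs the Lipschitz bound on the image of a single fundamental domain under $x\mapsto(x,\d v(x))$, whose closure is compact, and then invoke local Lipschitz continuity of $F$ on a compact neighbourhood of that image.
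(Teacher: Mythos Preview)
Your proposal is correct and follows essentially the same route as the paper: both arrive at the key formula $\Sigma_+[u](x)=\pi_1\circ F(x,\d v(x))$ on $\mathcal{O}[v]$ via Lemma~\ref{lemma:RegularityOfSubaction}(b) and Lemma~\ref{lemma:visC11}, and then read off continuity and Lipschitz continuity from this composition using compactness/periodicity. The only minor variation is that you establish single-valuedness on $\mathcal{O}[v]$ through Lemma~\ref{lemma:Sigma-andSigma+} and the injectivity of $\Sigma_-[v]$ (Lemma~\ref{lemma:injective}), whereas the paper instead invokes differentiability of $T^+[u]$ on $\mathcal{O}[v]$ (Lemma~\ref{lemma:visC11}) together with Lemma~\ref{lemma:WellDefined}(b); these are equivalent and equally short.
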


\begin{proof}
	For any $x\in \mathcal{O}[v]$, since $v$ is semiconcave, it follows from Lemma \ref{lemma:visC11} that $T^+\circ T^-[v]=T^+[u]$ is differentiable at $x$. Then, by item (b) of Lemma \ref{lemma:WellDefined}, the set $\Sigma_+[u](x)$ is a singleton. Consequently, $\Sigma_+[u]$ can be viewed as a real-valued mapping on $\mathcal{O}[v]$. 
	
	For any $x\in \mathcal{O}[v]$, if $y=\Sigma_+[u](x)$, then, by (b) of Lemma \ref{lemma:RegularityOfSubaction} and the differentiability of $T^+[u]$ at $x$, we have $-\partial_1S(x,y)=\d T^+[u](x)$. From Lemma \ref{lemma:visC11}, which states $\d T^+[v](x)=\d v(x)$, it follows that $-\partial_1S(x,y)=\d v(x)$. Using the definition of the discrete standard map $F$ (see  Remark \ref{remark:F}), we deduce that $$y=\pi_1\circ F(x,\d v(x)).$$
	Thus, the continuity of $\Sigma_+[u]$ follows from the continuity of $F$ and $\d v$. 
	
	Now assume that $F$ is locally Lipschitz  continuous, we will show that $\Sigma_+[u]$ is Lipschitz  continuous  on $\mathcal{O}[v]$. By item (v) of Proposition \ref{prop:PropertyOfSemiconcave}, there exists $M>0$ such that $\|\d v (x)\|\leq M$ for any $x\in \mathcal{O}[v]$. Let $B_M:=\{p\in(\mathbb{R}^d)^*\mid \|p\|\leq M\}$. Since  $F$ is locally Lipschitz, the mapping $\pi_1\circ F$ is Lipschitz continuous on the compact set $\mathbb{T}^d\times B_M$. Denote the Lipschitz constant of $\pi_1\circ F$ by $K'$. Then, for any  $x_1,x_2\in \mathcal{O}[v]$, we have \begin{align*}
    \|\Sigma_+[u](x_1)-\Sigma_+[u](x_2)\|&=\left\|\pi_1\circ F(x_1, \d v(x_1))-\pi_1\circ F(x_2, \d v(x_2))\right\|\\ 
    &\leq  K'\|(x_1, \d u(x_1))-(x_2, \d u(x_2))\|_{\mathbb{R}^{2d}}.  \end{align*}
  Finally, by Lemma \ref{lemma:visC11}, we have
\[
\|(x_1, \d v(x_1)) - (x_2, \d v(x_2))\|_{\mathbb{R}^{2d}} \leq \sqrt{1 + K^2} \|x_1 - x_2\|,
\]
where  $K$  is the Lipschitz constant of \( \d v \). Combining these, we conclude that $$\|\Sigma_+[u](x_1)-\Sigma_+[u](x_2)\|\leq K'\sqrt{1 + K^2} \|x_1 - x_2\|.$$
Thus, $\Sigma_+[u]$ is Lipschitz continuous on $ \mathcal{O}[v]$.
\end{proof}

\begin{proposition}\label{prop:invariant} Let $u$ be a discrete weak K.A.M. solution. Define $\mathcal{O}^*[u]:=\{(x,\d u(x))\mid x\in\mathcal{O}^\infty[u]\}$. Then, it is an $F$-invariant set, i.e.,
	$$F(\mathcal{O}^{*}[u])=\mathcal{O}^{*}[u].$$
\end{proposition}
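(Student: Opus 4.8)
The plan is to derive the invariance $F(\mathcal{O}^*[u]) = \mathcal{O}^*[u]$ by translating the set identity $\Sigma_-^m[u](\mathcal{O}^\infty[T^{-m}[u]]) = \mathcal{O}^\infty[u]$ (proved above, taken at $m=1$) into a statement about the standard map $F$, using the subdifferential information of Lemma~\ref{lemma:RegularityOfSubaction}. The whole argument rests on one observation: on $\mathcal{O}^\infty[u]$ the restriction of $F$ to the graph of $\d u$ reproduces the action of the optimal maps.

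First I would set up the auxiliary data. Since $u$ is a discrete weak K.A.M. solution, $u = T^-[u]-\overline S = T^-[u-\overline S]$, so $u\in T^-(B(\mathbb{T}^d))$ is semiconcave with linear modulus and $v := u-\overline S\in SC(\mathbb{T}^d)$ satisfies $u = T^-[v]$. A direct computation gives $\mathcal{O}[v] = \{x : T^+[u](x) = u(x)-\overline S\} = \mathcal{O}^1[u]\supseteq \mathcal{O}^\infty[u]$, so Lemma~\ref{lemma:visC11} shows $u$ is $C^{1,1}$ on $\mathcal{O}^\infty[u]$ (with $\d u = \d v$ and $T^+[u]=T^+\circ T^-[v]$ sharing value and derivative with $v$ there), which makes $\mathcal{O}^*[u]$ well-defined. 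Moreover $T^{-1}[u] = u+\overline S$ and the sets $\mathcal{O}^n[w]$ are insensitive to adding a constant to $w$, so $\mathcal{O}^\infty[T^{-1}[u]] = \mathcal{O}^\infty[u]$ and the Proposition above specializes to $\Sigma_-[u](\mathcal{O}^\infty[u]) = \mathcal{O}^\infty[u]$.

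Next I would prove the key step: \emph{if $x,y\in\mathcal{O}^\infty[u]$ and $x\in\Sigma_-[u](y)$, then $F(x,\d u(x)) = (y,\d u(y))$.} By Lemma~\ref{lemma:RegularityOfSubaction}(a), $-\partial_1 S(x,y)\in\nabla^- u(x)$ and $\partial_2 S(x,y)\in\nabla^+ T^-[u](y) = \nabla^+ u(y)$ (using $T^-[u]=u+\overline S$). Since $u$ is differentiable at $x$ and at $y$ (both lie in $\mathcal{O}[v]$), Proposition~\ref{prop:PropertyOfSemiconcave} forces $\nabla^- u(x) = \{\d u(x)\}$ and $\nabla^+ u(y) = \{\d u(y)\}$, hence $-\partial_1 S(x,y) = \d u(x)$ and $\partial_2 S(x,y) = \d u(y)$; by the defining relations of $F$ in Remark~\ref{remark:F} this is exactly $F(x,\d u(x)) = (y,\d u(y))$.

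Finally I would conclude by the two inclusions. For $F(\mathcal{O}^*[u])\subseteq\mathcal{O}^*[u]$: given $x\in\mathcal{O}^\infty[u]$, the identity $\mathcal{O}^\infty[u] = \Sigma_-[u](\mathcal{O}^\infty[u])$ furnishes $y\in\mathcal{O}^\infty[u]$ with $x\in\Sigma_-[u](y)$, and the key step gives $F(x,\d u(x)) = (y,\d u(y))\in\mathcal{O}^*[u]$. For the reverse inclusion: given $y\in\mathcal{O}^\infty[u]$, Remark~\ref{remark:coercive}(a) provides $x\in\Sigma_-[u](y)$, which lies in $\mathcal{O}^\infty[u]$ by the same identity, and the key step again gives $F(x,\d u(x)) = (y,\d u(y))$, so $(y,\d u(y))\in F(\mathcal{O}^*[u])$. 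I expect the only real obstacle to be bookkeeping rather than ideas: verifying that $\mathcal{O}^\infty$ ignores the additive constant $\overline S$ (so that the general Proposition applies with $m=1$), and that differentiability of $u$ precisely on $\mathcal{O}^\infty[u]$—which is what pins the sub/superdifferentials down to singletons—is exactly what Lemma~\ref{lemma:visC11} yields through the semiconcave companion $v = u-\overline S$.
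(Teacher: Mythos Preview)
Your proof is correct and follows essentially the same route as the paper's: specialize the identity $\Sigma_-[u](\mathcal{O}^\infty[T^{-1}[u]])=\mathcal{O}^\infty[u]$ to the weak K.A.M.\ case (where $T^{-1}[u]=u+\overline S$), then use Lemma~\ref{lemma:RegularityOfSubaction} plus differentiability of $u$ on $\mathcal{O}^\infty[u]$ to read off $F(x,\d u(x))=(y,\d u(y))$. Your write-up is in fact slightly more careful than the paper's about justifying differentiability via Lemma~\ref{lemma:visC11}, and you avoid the unnecessary detour through Lemma~\ref{lemma:Sigma-andSigma+} by noting that part (a) of Lemma~\ref{lemma:RegularityOfSubaction} already gives both partials (since $\nabla^+T^-[u]=\nabla^+u$).
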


\begin{proof}
Since $u$ is a discrete weak K.A.M. solution, $T^{-}[u]=u+\bar{S}$.

	{\textbf{Step 1. Show $F(\mathcal{O}^{*}[u])\subset\mathcal{O}^{*}[u]$:}} For any $x\in\mathcal{O}^\infty[u]$, by Proposition 2.8, $x\in\Sigma_-[u](\mathcal{O}^\infty[u]).$
	That is, there exists $y\in \mathcal{O}^\infty[u]$ such that $x\in\Sigma_-[u](y)$. By Lemma \ref{lemma:Sigma-andSigma+}, $y\in \Sigma_+[u](x)$. Using Lemma \ref{lemma:RegularityOfSubaction}, we have $$-\partial_1S(x,y)=\d u (x)\quad \text{and}\quad \partial_2S(x,y)=\d u(y).$$ Hence, $F(\mathcal{O}^{*}[u])\subset\mathcal{O}^{*}[u].$
	
	{\textbf{Step 2. Show $\mathcal{O}^{*}[u]\subset F(\mathcal{O}^{*}[u])$:}} For any $x\in\mathcal{O}^\infty[u]$, let $w\in \Sigma_-[u](x)$. By Proposition 2.8, $w\in\mathcal{O}^\infty[u]$. By Lemma \ref{lemma:Sigma-andSigma+}, $x\in \Sigma_+[u](w)$. Using Lemma \ref{lemma:RegularityOfSubaction}, we have  $$-\partial_1S(w,x)=\d u (w)\quad \text{and}\quad \partial_2S(w,x)=\d u(x).$$ Hence, $F(\mathcal{O}^{*}[u])\supset\mathcal{O}^{*}[u].$
	
	Combining both inclusions, we conclude that
$
F(\mathcal{O}^{*}[u])=\mathcal{O}^{*}[u].$
\end{proof}

\section{Proofs of Theorems \ref{thm:main}-\ref{thm:packaged}}\label{sec:Theorem1}
 In Section \ref{sec:Theorem1} , we operate under the assumption that the function  $S$  satisfies Hypothesis \ref{hypothesis:S1},  Hypothesis \ref{hypothesis:S2} and  Hypothesis \ref{hypothesis:S3}.
The proof of Theorem~\ref{thm:main} is divided into the following four subsections and then Theorem~\ref{thm:packaged} follows immediately.  

\subsection{$\Sigma_+[u]$ is a non-decreasing continuous real-valued mapping}

Let $u\in C^0(\mathbb{T})$. Recall that the \emph{optimal forward mapping} is the closed multi-valued mapping \begin{align*}
    \Sigma_+[u]:\mathbb{R}&\rightarrow\mathbb{R}\\ 
    x&\mapsto \mathop{\arg\max}_{y\in\mathbb{R}}\{u(y)-S(x,y)\}.
  \end{align*}
  So, without verification, $\Sigma_+[u](x)$ is a subset of $\mathbb{R}$. However, we will later show that under certain assumptions, $\Sigma_+[u](x)$ is actually a singleton for any $x\in\mathbb{R}$ (see Lemma \ref{lemma:singleton}). In other words, the optimal forward mapping is actually a mapping under certain assumptions.

Thanks to Remark \ref{remark:coercive} (a), we have $\Sigma_+[u](x)$ is non-empty for each $x\in\mathbb{R}$.  For two subsets $A\subset \mathbb{R} $ and $B\subset \mathbb{R}$, we define $
A\leq B$ if and only if $ x_1\leq x_2, \forall x_1\in A, x_2\in B.
$ 
\begin{lemma}\label{lemma:SigmaNondecreasing}
  If $u\in C^0(\mathbb{T})$, then $\Sigma_+[u]$ is non-decreasing. In other words, if $x_1<x_2$, then $\Sigma_+[u](x_1)\leq \Sigma_+[u](x_2)$.
\end{lemma}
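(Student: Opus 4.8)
The plan is to exploit the Aubry--Le Daeron Non-crossing Lemma (Hypothesis~\ref{hypothesis:S3}) directly, in the spirit of the classical monotonicity arguments for twist maps. Suppose, for contradiction, that $x_1 < x_2$ but there exist $y_1 \in \Sigma_+[u](x_1)$ and $y_2 \in \Sigma_+[u](x_2)$ with $y_1 > y_2$. Then $(x_1 - x_2)(y_1 - y_2) < 0$, so the Non-crossing Lemma gives
\[
S(x_1, y_2) + S(x_2, y_1) < S(x_1, y_1) + S(x_2, y_2).
\]

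Next I would write down the optimality inequalities coming from the definition of $\Sigma_+[u]$. Since $y_1$ maximizes $y \mapsto u(y) - S(x_1, y)$, we have $u(y_1) - S(x_1, y_1) \geq u(y_2) - S(x_1, y_2)$, i.e. $u(y_1) - u(y_2) \geq S(x_1, y_1) - S(x_1, y_2)$. Since $y_2$ maximizes $y \mapsto u(y) - S(x_2, y)$, we have $u(y_2) - S(x_2, y_2) \geq u(y_1) - S(x_2, y_1)$, i.e. $u(y_1) - u(y_2) \leq S(x_2, y_1) - S(x_2, y_2)$. Combining these two gives
\[
S(x_1, y_1) - S(x_1, y_2) \leq u(y_1) - u(y_2) \leq S(x_2, y_1) - S(x_2, y_2),
\]
hence $S(x_1, y_1) + S(x_2, y_2) \leq S(x_1, y_2) + S(x_2, y_1)$, which directly contradicts the strict inequality from the Non-crossing Lemma. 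This establishes $\Sigma_+[u](x_1) \leq \Sigma_+[u](x_2)$ whenever $x_1 < x_2$, which is exactly the claimed monotonicity. Note that this argument does not require $\Sigma_+[u]$ to be single-valued, so it is legitimately available at this point in the development.

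I do not anticipate a serious obstacle here: the whole proof is the two-line combination of the variational inequalities with the Non-crossing Lemma. The only point requiring a small amount of care is the logical structure of the ordering relation ``$A \leq B$'' for subsets: one must phrase the contradiction hypothesis correctly as ``there exist $y_1 \in \Sigma_+[u](x_1)$, $y_2 \in \Sigma_+[u](x_2)$ with $y_1 > y_2$'', i.e. the negation of ``$y_1 \leq y_2$ for all such pairs'', and then derive the contradiction for that particular pair. Since $\Sigma_+[u](x_i) \neq \emptyset$ by Remark~\ref{remark:coercive}(a), the statement is non-vacuous, and the argument closes cleanly.
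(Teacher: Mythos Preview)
Your proof is correct and essentially identical to the paper's own argument: both proceed by contradiction, apply the Non-crossing Lemma to the pair $(x_1,x_2,y_1,y_2)$, and derive the reverse inequality by adding the two optimality conditions from the definition of $\Sigma_+[u]$. Your additional remark about the set-order relation and non-emptiness is accurate and matches the paper's conventions.
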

\begin{proof}
Suppose that $x_1<x_2$ but $y_1>y_2$, where $y_1\in\Sigma_+(x_1)$ and $y_2\in \Sigma_+(x_2)$. We will use the Non-crossing Lemma (see Hypothesis \ref{hypothesis:S3}) to reduce a contradiction. We know $(x_1-x_2)(y_1-y_2)<0$ and the Non-crossing Lemma says \begin{align*}
  S(x_1,y_2)+S(x_2,y_1)<S(x_1,y_1)+S(x_2,y_2).
\end{align*} Using the definition of the optimal forward mapping $\Sigma_+[u]$, from $y_1\in\Sigma_+(x_1)$ and $y_2\in \Sigma_+(x_2)$, we get that \begin{align*}
    u(y_1)-S(x_1,y_1)\geq u(y_2)-S(x_1,y_2);\\ 
    u(y_2)-S(x_2,y_2)\geq u(y_1)-S(x_2,y_1).
\end{align*}
Adding these two inequalities together, we get \begin{align*}
    S(x_1,y_2)+S(x_2,y_1)\geq S(x_1,y_1)+S(x_2,y_2).
\end{align*}
This contradicts the Non-crossing Lemma and thus we complete the proof.
\end{proof}

	\begin{lemma}\label{lemma:singleton}
   If $u= T^-[v]$, $v\in SC(\mathbb{T})$, then for any $x\in\mathbb{R}$, the set $\Sigma_+[u](x)$ is a singleton.
\end{lemma}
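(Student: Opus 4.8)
The plan is to exploit the one-dimensional non-crossing structure together with the regularity results already established for $T^-[v]$ when $v$ is semiconcave. First I would record that, by Lemma~\ref{lemma:SigmaNondecreasing}, $\Sigma_+[u]$ is non-decreasing as a multi-valued map; combined with the $1$-periodicity $S(x+1,y+1)=S(x,y)$ (Hypothesis~\ref{hypothesis:S1}), this already forces $\Sigma_+[u](x)$ to be an interval for each fixed $x$, and it forces the ``graph'' of $\Sigma_+[u]$ to be a monotone $\mathbb{Z}^2$-periodic set in $\mathbb{R}^2$. So the issue is purely to rule out a nondegenerate vertical segment $\{x\}\times[y_1,y_2]$ with $y_1<y_2$ sitting inside $\operatorname{Graph}(\Sigma_+[u])$.

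The key observation is that for $u=T^-[v]$ with $v\in SC(\mathbb{T})$, we already know from the regularity analysis that $T^+[u]=T^+\circ T^-[v]$ is semiconvex, and from Lemma~\ref{lemma:WellDefined}(b) that $\Sigma_+[u](x)$ is a singleton \emph{exactly} when $T^+[u]$ is differentiable at $x$; and $T^+[u]$, being semiconvex on $\mathbb{R}$, is differentiable at \emph{all but countably many} points. So $\Sigma_+[u]$ is already single-valued off a countable set $N\subset\mathbb{R}$. To upgrade this to ``single-valued everywhere'' I would argue by contradiction: suppose $y_1<y_2$ both lie in $\Sigma_+[u](x_0)$. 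By monotonicity (Lemma~\ref{lemma:SigmaNondecreasing}), for every $x>x_0$ we get $\Sigma_+[u](x)\geq y_2$ (meaning every element is $\geq y_2$), and for every $x<x_0$ we get $\Sigma_+[u](x)\leq y_1$. Now pick points $x<x_0<x'$ with $x,x'\notin N$, so that $y:=\Sigma_+[u](x)$ and $y':=\Sigma_+[u](x')$ are genuine single values with $y\leq y_1<y_2\leq y'$. Since both $y,y',y_1,y_2\in\Sigma_+[u](\cdot)$ and, by Lemma~\ref{lemma:Sigma+Lip}, $\Sigma_+[u]$ is continuous on $\mathcal{O}[v]$ — or more elementarily, since the multivalued map has a closed graph and single values on a dense set — one deduces $\Sigma_+[u](x)\to y_1$ as $x\uparrow x_0$ through $\mathbb{R}\setminus N$ and $\Sigma_+[u](x')\to y_2$ as $x'\downarrow x_0$ through $\mathbb{R}\setminus N$, so the graph of $\Sigma_+[u]$ genuinely contains the whole vertical segment $\{x_0\}\times[y_1,y_2]$.

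Now I would derive the contradiction from the calibration/derivative identities. For any $y^\ast\in\Sigma_+[u](x_0)$, Lemma~\ref{lemma:RegularityOfSubaction}(b) gives $-\partial_1S(x_0,y^\ast)\in\nabla^- T^+[u](x_0)$. If the whole segment $[y_1,y_2]$ lies in $\Sigma_+[u](x_0)$, then $\{-\partial_1S(x_0,y):y\in[y_1,y_2]\}\subset\nabla^- T^+[u](x_0)$; but by Hypothesis~\ref{hypothesis:S2}(2) the map $y\mapsto\partial_1S(x_0,y)$ is a homeomorphism onto $(\mathbb{R})^\ast$, so this set is a nondegenerate interval, contradicting the fact that $T^+[u]$ is semiconvex (hence $\nabla^- T^+[u](x_0)$, being a subdifferential of a semiconvex function, is either empty or a singleton at every point — indeed semiconvex functions have singleton subdifferentials wherever the subdifferential is nonempty, by Proposition~\ref{prop:PropertyOfSemiconcave}). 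Alternatively, and perhaps more cleanly, I can avoid the limiting argument entirely: assuming directly that $y_1<y_2$ both lie in $\Sigma_+[u](x_0)$, apply Lemma~\ref{lemma:RegularityOfSubaction}(b) to get $-\partial_1S(x_0,y_1)$ and $-\partial_1S(x_0,y_2)$ both in $\nabla^- T^+[u](x_0)$; since $y_1\neq y_2$ and $y\mapsto\partial_1S(x_0,y)$ is injective these are two distinct elements of $\nabla^- T^+[u](x_0)$, contradicting semiconvexity of $T^+[u]$. I expect the routine but genuine obstacle to be making precise why $T^+[u]=T^+\circ T^-[v]$ is semiconvex and hence has at most one element in each subdifferential — this is where one invokes that $T^+(B(\mathbb{T}^d))$ consists of semiconvex functions with uniform linear modulus (cited before Lemma~\ref{lemma:RegularityOfSubaction}) together with item~(iii) of Proposition~\ref{prop:PropertyOfSemiconcave}. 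Once that structural fact is in hand, the contradiction is immediate and no limiting argument is needed.
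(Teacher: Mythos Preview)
Your ``cleaner'' argument has a genuine error: you claim that semiconvexity of $T^+[u]$ forces $\nabla^- T^+[u](x_0)$ to be empty or a singleton, but this is backwards. If $w$ is semiconvex then $-w$ is semiconcave, and $\nabla^- w(x)=-\nabla^+(-w)(x)$ is the (negated) \emph{super}differential of a semiconcave function, which is always nonempty and can perfectly well be a nondegenerate interval --- think of $w(x)=|x|$ at $x=0$. What semiconvexity buys you is that $\nabla^+ w(x)$ is at most a singleton, not $\nabla^- w(x)$. Since Lemma~\ref{lemma:RegularityOfSubaction}(b) only lands you in $\nabla^- T^+[u](x_0)$, having two (or a whole interval of) distinct elements there is no contradiction at all. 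Your first, more elaborate approach founders on the exact same point. Proposition~\ref{prop:PropertyOfSemiconcave}(iii) does not say what you attribute to it: it says differentiability is equivalent to \emph{both} $\nabla^\pm$ being nonempty, and for semiconvex $w$ this yields ``$\nabla^+ w(x)\neq\emptyset\Rightarrow w$ differentiable at $x$'', not the statement about $\nabla^- w$ you need.

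The paper's proof avoids $T^+[u]$ entirely and instead exploits the semiconcavity of $v$ through the backward map. From $y_1<y_2$ in $\Sigma_+[u](x_0)$ and monotonicity (Lemma~\ref{lemma:SigmaNondecreasing}), any point of $(y_1,y_2)$ that lies in some $\Sigma_+[u](x)$ must have $x=x_0$. Now pick $z_1\neq z_2$ in $(y_1,y_2)$; since $\Sigma_-[v](z_i)\neq\emptyset$ and Lemma~\ref{lemma:Sigma-andSigma+} turns each such preimage into a point of $(\Sigma_+[u])^{-1}(z_i)$, both preimages must equal $x_0$, so $x_0\in\Sigma_-[v](z_1)\cap\Sigma_-[v](z_2)$. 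This contradicts Lemma~\ref{lemma:injective}, which uses semiconcavity of $v$ (not semiconvexity of $T^+[u]$) to show $\Sigma_-[v]$ is injective. The semiconcavity of $v$ is what makes $\nabla^- v(x_0)$ at most a singleton, and that is where the contradiction actually lives.
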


\begin{proof}
  Suppose that there exists $y_1<y_2$ such that $y_1,y_2\in\Sigma_+[u](x_0)$.  By Lemma \ref{lemma:SigmaNondecreasing},  for any $x>x_0$, we have $\Sigma_+[u](x)\geq y_2$ and for any $x<x_0$, we have $\Sigma_+[u](x)\leq y_1$. In other words, if $\Sigma_+[u](x)\cap (y_1,y_2)\ne\emptyset$, then $x=x_0$. 
 	Now, take $z_1,z_2$ such that $y_1<z_1<z_2<y_2$. For any $x\in \Sigma_-[v](z_i)$, $i=1,2$, by Lemma \ref{lemma:Sigma-andSigma+}, we have $z_i\in \Sigma_+[u](x)\cap (y_1,y_2)$, which implies $x=x_0$. 
  	Thus, $x_0\in \Sigma_-[v](z_1)\cap \Sigma_-[v](z_2)$. By Lemma \ref{lemma:injective}, since $v$ is semiconcave, we must have $z_1=z_2$. This leads to a contradiction. \end{proof}

If $u= T^-[v]$, $v\in SC(\mathbb{T})$, then $\Sigma_+[u]:\mathbb{R}\rightarrow\mathbb{R}$ is a non-decreasing real-valued mapping. Since discontinuities of monotonic function are jump discontinuities, but Lemma \ref{lemma:Pi2OfGraphSigma} implies that $\Sigma_+[u]$ has no jump discontinuity, we conclude that $\Sigma_+[u]$ is continuous for any $u\in T^-(C^0(\mathbb{T}))$.
Utilizing the periodicity of $u$, along with  the diagonal-periodicity of $S$, we deduce from the definition that  $\Sigma_+[u](x+1)=\Sigma_+[u](x)+1$. In other words, $\Sigma_+[u]$ is a lift of a circle map (not necessarily invertible) of degree $1$.
We conclude that 

\begin{proposition}\label{prop:LiftOfCircleMap}
	If $u= T^-[v]$, $v\in SC(\mathbb{T})$, then $\Sigma_+[u]$ is a non-decreasing continuous real-valued mapping, which is a lift of a circle map of degree $1$.\qed
\end{proposition}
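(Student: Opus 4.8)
The plan is to assemble the three asserted properties from lemmas already in hand, proceeding in the order: single-valuedness, monotonicity, continuity, and commutation with integer translations. First I would note that $\Sigma_+[u](x)$ is nonempty for every $x\in\mathbb{R}$ by Remark~\ref{remark:coercive}(a), and that it is a singleton by Lemma~\ref{lemma:singleton}, which applies precisely because $u=T^-[v]$ with $v\in SC(\mathbb{T})$. Hence $\Sigma_+[u]$ is a genuine real-valued map $\mathbb{R}\to\mathbb{R}$, and monotonicity follows at once from Lemma~\ref{lemma:SigmaNondecreasing}: for $x_1<x_2$ one has $\Sigma_+[u](x_1)\le\Sigma_+[u](x_2)$, which for a single-valued map is exactly the statement that $\Sigma_+[u]$ is non-decreasing.

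For continuity I would use the standard fact that a non-decreasing function $\mathbb{R}\to\mathbb{R}$ can fail to be continuous only at a jump, i.e.\ at a point $x_0$ with $\lim_{x\uparrow x_0}\Sigma_+[u](x)<\lim_{x\downarrow x_0}\Sigma_+[u](x)$; any value $y$ strictly between these two one-sided limits would then lie outside the image $\bigcup_{x\in\mathbb{R}}\Sigma_+[u](x)$. But Lemma~\ref{lemma:Pi2OfGraphSigma} asserts that this image equals all of $\mathbb{R}$, since $u\in T^-(C^0(\mathbb{T}))$. Therefore no such gap can occur, so $\Sigma_+[u]$ has no jump discontinuity and is continuous.

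For the degree-$1$ property I would exploit the $1$-periodicity of $u$ together with the diagonal-periodicity of $S$ (Hypothesis~\ref{hypothesis:S1}(3)): the identity $u(y+1)-S(x+1,y+1)=u(y)-S(x,y)$ shows that the shift $y\mapsto y+1$ is a bijection between the set of maximizers of $y\mapsto u(y)-S(x,y)$ and the set of maximizers of $y\mapsto u(y)-S(x+1,y)$. Consequently $\Sigma_+[u](x+1)=\Sigma_+[u](x)+1$, which is precisely the assertion that $\Sigma_+[u]$ is the lift of a (not necessarily invertible) circle map of degree $1$. Collecting the three points completes the proof.

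The only genuinely substantive input is the single-valuedness, which rests on the Aubry--Le Daeron non-crossing property (Hypothesis~\ref{hypothesis:S3}) and the semiconcavity of $v$, both already absorbed into Lemma~\ref{lemma:singleton}; once that is granted, the monotonicity, the surjectivity-via-calibration argument for continuity, and the periodicity computation are all routine, so I do not expect any real obstacle at this stage.
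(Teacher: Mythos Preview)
Your proposal is correct and follows essentially the same route as the paper: single-valuedness from Lemma~\ref{lemma:singleton}, monotonicity from Lemma~\ref{lemma:SigmaNondecreasing}, continuity via the no-jump argument combined with the surjectivity in Lemma~\ref{lemma:Pi2OfGraphSigma}, and the degree-$1$ property from periodicity of $u$ and diagonal-periodicity of $S$. The only difference is that you spell out the surjectivity-excludes-jumps reasoning and the periodicity identity slightly more explicitly than the paper does.
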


\subsection{$\Sigma_+[u]$ is  Lipschitz continuous}

 
 Let  $u=T^-[v]$, where $v\in SC(\mathbb{T})$. It has already been established that $\Sigma_+[u]$ is  Lipschitz continuous on the subset $\mathcal{O}[v]=\Sigma_-[v](\mathbb{R})\subset \mathbb{R}$ (see Lemma \ref{lemma:Sigma+Lip}). To prove $\Sigma_+[u]$ is  Lipschitz  continuous on the entire $\mathbb{R}$, we proceed as follows:
 
  For any $x\in\mathbb{R}$, we will identify another point  $x'\in \Sigma_-[v](\mathbb{R})$ such that $\Sigma_+[u](x)=\Sigma_+[u](x')$. Then, by leveraging the established Lipschitz property of $\Sigma_+[u]$ on $\Sigma_-[v](\mathbb{R})$,  we will deduce the Lipschitz continuity of $\Sigma_+[u]$ at $x$.

\begin{lemma}\label{lemma:SingularityAndNSu}
	Let $u= T^-[v]$, $v\in SC(\mathbb{T})$. For any $y\in\mathbb{R}$, the preimage $(\Sigma_+[u])^{-1}(y)$ is either a singleton or a non-degenerate closed bounded interval. If $(\Sigma_+[u])^{-1}(y)$ is a singleton, denoted by $x$, then $x\in\Sigma_-[v](y)$. If $(\Sigma_+[u])^{-1}(y)$ is a non-degenerate closed bounded interval, denoted by $[x_-,x_+]$, then $x_-,x_+\in\Sigma_-[v](y)$.
\end{lemma}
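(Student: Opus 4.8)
The plan is to exploit the structure of $g:=\Sigma_+[u]$ already obtained. By Proposition~\ref{prop:LiftOfCircleMap}, $g:\mathbb{R}\to\mathbb{R}$ is continuous, non-decreasing and satisfies $g(x+1)=g(x)+1$, and by Lemma~\ref{lemma:Pi2OfGraphSigma} it is surjective. First I would record the elementary shape of a level set: $g^{-1}(y)$ is closed by continuity, it is an interval since $g$ is non-decreasing (if $g(a)=g(b)=y$ with $a<b$ then $g\equiv y$ on $[a,b]$), and it is bounded because $g(x+1)=g(x)+1$ prevents it from containing two points at distance $\ge 1$; surjectivity makes it non-empty. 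Hence $(\Sigma_+[u])^{-1}(y)$ is a single point or a non-degenerate closed bounded interval $[x_-,x_+]$, which is the first assertion.

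The second step is to locate $\Sigma_-[v](y)$ inside this level set. Since $g$ is single-valued by Lemma~\ref{lemma:singleton}, the case $n=1$ of Lemma~\ref{lemma:Sigma-andSigma+} reads $x\in\Sigma_-[v](y)\iff\big(g(x)=y\text{ and }x\in\mathcal{O}[v]\big)$; in particular $\emptyset\neq\Sigma_-[v](y)\subseteq g^{-1}(y)$ for every $y$, the non-emptiness being Remark~\ref{remark:coercive}(a). If $g^{-1}(y)=\{x\}$ this already forces $\Sigma_-[v](y)=\{x\}$, so the singleton case is done.

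For the interval case $g^{-1}(y)=[x_-,x_+]$ with $x_-<x_+$, the idea is to approach $y$ from the appropriate side. For $y'<y$, monotonicity of $g$ and $g(x_-)=y$ give $g^{-1}(y')\subseteq(-\infty,x_-)$; picking $y_n\uparrow y$ and $x_n\in\Sigma_-[v](y_n)\subseteq g^{-1}(y_n)\subseteq(-\infty,x_-)$, the properness of $g$ (it is a degree-one lift, so $g^{-1}([y-1,y])$ is compact) makes $(x_n)$ eventually bounded, whence a subsequence converges to some $x^*\le x_-$; since $\operatorname{Graph}(\Sigma_-[v])$ is closed, $x^*\in\Sigma_-[v](y)\subseteq[x_-,x_+]$, forcing $x^*=x_-$ and thus $x_-\in\Sigma_-[v](y)$. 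The symmetric argument with $g^{-1}(y')\subseteq(x_+,\infty)$ for $y'>y$ and $y_n\downarrow y$ gives $x_+\in\Sigma_-[v](y)$.

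The delicate point is exactly this last step: one has to approach $y$ from the correct side so that the nearby level sets sit strictly on one side of the endpoint, which, combined with the closedness of $\operatorname{Graph}(\Sigma_-[v])$, pins the limit at $x_\mp$ rather than at an interior point of $[x_-,x_+]$; the boundedness of $(x_n)$ needed to extract a limit comes for free from the properness of $g$. Everything else reduces to results already established in the excerpt.
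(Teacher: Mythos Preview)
Your argument is correct and the singleton case is handled identically to the paper. For the interval case, however, your route is genuinely different and in fact more economical. The paper first proves that the set $\mathcal{C}$ of $y$'s whose level set is a non-degenerate interval is at most countable, then uses this to manufacture a sequence $x_n\to x_-$ with $(\Sigma_+[u])^{-1}(\Sigma_+[u](x_n))$ a singleton (so that Case~1 applies and $x_n\in\Sigma_-[v](y_n)$), and finally passes to the limit via uniform convergence of $x\mapsto v(x)+S(x,y_n)$. You bypass the countability detour entirely: by approaching $y$ from one side in the $y$-variable and choosing $x_n\in\Sigma_-[v](y_n)$ directly, monotonicity forces $x_n<x_-$, properness of the degree-one lift gives a convergent subsequence, and the closedness of $\operatorname{Graph}(\Sigma_-[v])$ (which is exactly the content of the paper's uniform-convergence step, phrased more intrinsically) plus the inclusion $\Sigma_-[v](y)\subseteq[x_-,x_+]$ pin the limit at $x_-$. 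The trade-off is that the paper's countability statement is reused later (in the proof of Lemma~\ref{lemma:EquivalentDefinitionOfSingularities}), so its proof is not wasted; but as a proof of the present lemma alone, yours is shorter and uses only structural facts already on the table.
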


\begin{proof}
	By Lemma \ref{lemma:Pi2OfGraphSigma}, we know that for any $y\in\mathbb{R}$, the set $(\Sigma_+[u])^{-1}(y)$ is nonempty. 

  {\textbf{Case 1: }}If $(\Sigma_+[u])^{-1}(y)$ is a singleton $\{x\}$. By Remark \ref{remark:coercive} (a), we can find $x'\in\mathbb{R}$ such that $x'\in\Sigma_-[u](y)$. Then by Lemma \ref{lemma:Sigma-andSigma+}, $x'\in(\Sigma_+[u])^{-1}(y)$. Since $(\Sigma_+[u])^{-1}(y)$ is a singleton, we must have $x'=x$, implying $x\in\Sigma_-[u](y)$.

  {\textbf{Case 2: }}If $(\Sigma_+[u])^{-1}(y)$ contains at least two distinct points $x_1<x_2$.  Since $\Sigma_+[u]$ is continuous and satisfies $\Sigma_+[u](x+1)=\Sigma_+[u](x)+1$, we define\begin{align*}
    x_-:=\inf\{x\in\mathbb{R}\mid \Sigma_+[u](x)=y\},\quad \quad x_+:=\sup\{x\in\mathbb{R}\mid \Sigma_+[u](x)=y\}.
  \end{align*}
  Clearly, $-\infty<x_-\leq x_1<x_2\leq x_+<+\infty$ and $x_-,x_+\in (\Sigma_+[u])^{-1}(y)$. Since $\Sigma_+[u]$ is non-decreasing, $(\Sigma_+[u])^{-1}(y)=[x_-,x_+]$, which is a non-degenerate closed bounded interval.

 To prove $x_-,x_+\in\Sigma_-[v](y)$,  we first establish that the cardinality of \begin{align*}
    \mathcal{C}:=\{y\mid (\Sigma_+[u])^{-1}(y) \text{ is a non-degenerate closed interval}\}
  \end{align*} is at most countable. 
  Define a mapping \begin{align*}
    (\Sigma_+[u])^{-1}:\mathcal{C}&\rightarrow 2^{\mathbb{R}}\\
    y&\mapsto [x_-(y),x_+(y)].
  \end{align*}
 For $y_1\ne y_2$, the intervals $ [x_-(y_1),x_+(y_1)]$ and $[x_-(y_2),x_+(y_2)]$ are disjoint.
  Using the Axiom of Choice, we can select a rational number $q(y)$ from each $[x_-(y),x_+(y)]$, thus defining an injection from $\mathcal{C}$ into $\mathbb{Q}$.Since  $\mathbb{Q}$  is countable,  $\mathcal{C}$  is also at most countable.

	Now, we show $x_-,x_+\in\Sigma_-[v](y)$.
  Suppose $(\Sigma_+[u])^{-1}(y)=[x_-,x_+]$ is a non-degenerate interval. We claim there exists a sequence $\{x_n\}_{n\in\mathbb{Z}_+}$ with $x_n\rightarrow x_-$ and   $(\Sigma_+[u])^{-1}(\Sigma_+[u](x_n))$  is a singleton for each  $n$. Suppose this is not true, then there exists  $\delta>0$ such that $(\Sigma_+[u])^{-1}(\Sigma_+[u](x))$ is not a singleton for all $x\in [x_--\delta,x_-]$. Since  $x_-=\inf\{x\mid \Sigma_+[u](x)=y\}$, we have $\Sigma_+[u](x_--\delta)<y$. Using the countability of $\mathcal{C}$, there exists $y'$ such that $\Sigma_+(x_--\delta)<y'<y$ and $(\Sigma_+[u])^{-1}(y')$ is a singleton, say $x'$. Then since $\Sigma_+$ is non-decreasing, $x'\in [x_--\delta,x_-]$, which contradicts the assumption that $(\Sigma_+[u])^{-1}(y')$ is not a singleton.

  	Define $y_n:= \Sigma_+[u](x_n)$, and thus  $x_n\in\Sigma_-[v](y_n)$ for all $n$. Define $f_n(x):=v(x)+S(x,y_n)$ and $f(x):=v(x)+S(x,y)$. Since $f_n\rightarrow f$ uniformly on compact sets, $\inf f_n$ converges to $\inf f$. Note that $\inf f_n=v(x_n)+S(x_n,y_n)$, so $$\inf f=\lim_n\inf f_n=\lim_n\left(v(x_n)+S(x_n,y_n)\right)=v(x_-)+S(x_-,y).$$
  		Hence $x_-\in\Sigma_-[v](y)$. A similar argument shows $x_+\in\Sigma_-[v](y)$. 
\end{proof}

\begin{proposition}\label{proposition:SigmaLipschitz}
  If $u= T^-[v]$, $v\in SC(\mathbb{T})$ and the discrete standard map $F$ is locally Lipschitz  continuous, then $\Sigma_+[u]$ is Lipschitz  continuous.
\end{proposition}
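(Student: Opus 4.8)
The plan is to bootstrap the Lipschitz estimate from the set $\mathcal{O}[v]=\Sigma_-[v](\mathbb{R})$, on which it is already available by Lemma~\ref{lemma:Sigma+Lip} (this is precisely where the local Lipschitz continuity of $F$ enters), to all of $\mathbb{R}$, using the description of the level sets of $\Sigma_+[u]$ furnished by Lemma~\ref{lemma:SingularityAndNSu}. Concretely, fix $x_1<x_2$ and set $y_i:=\Sigma_+[u](x_i)$. Since $\Sigma_+[u]$ is non-decreasing (Lemma~\ref{lemma:SigmaNondecreasing}), $y_1\le y_2$; the case $y_1=y_2$ is trivial, so assume $y_1<y_2$.

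For $i=1,2$, consider the level set $(\Sigma_+[u])^{-1}(y_i)$, which is compact (closed by continuity of $\Sigma_+[u]$, bounded because $\Sigma_+[u](x+1)=\Sigma_+[u](x)+1$). By Lemma~\ref{lemma:SingularityAndNSu} it is either a single point or a non-degenerate compact interval, and in either case its infimum and supremum lie in $\Sigma_-[v](y_i)\subset\mathcal{O}[v]$. I would then take $x_1':=\sup (\Sigma_+[u])^{-1}(y_1)$ and $x_2':=\inf (\Sigma_+[u])^{-1}(y_2)$, so that $x_1',x_2'\in\mathcal{O}[v]$ and $\Sigma_+[u](x_i')=y_i$. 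Because $\Sigma_+[u]$ is non-decreasing and $y_1<y_2$, every point of $(\Sigma_+[u])^{-1}(y_1)$ lies weakly to the left of every point of $(\Sigma_+[u])^{-1}(y_2)$, whence $x_1'\le x_2'$; and since $x_i\in(\Sigma_+[u])^{-1}(y_i)$ we have $x_1\le x_1'$ and $x_2'\le x_2$. Thus $x_1\le x_1'\le x_2'\le x_2$, so in particular $|x_1'-x_2'|\le|x_1-x_2|$.

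Letting $K$ be the Lipschitz constant of $\Sigma_+[u]$ on $\mathcal{O}[v]$ from Lemma~\ref{lemma:Sigma+Lip}, one concludes
\[
 |\Sigma_+[u](x_1)-\Sigma_+[u](x_2)|=|y_1-y_2|=|\Sigma_+[u](x_1')-\Sigma_+[u](x_2')|\le K\,|x_1'-x_2'|\le K\,|x_1-x_2| ,
\]
and since $x_1<x_2$ were arbitrary, $\Sigma_+[u]$ is $K$-Lipschitz on $\mathbb{R}$. The substantive input is entirely contained in Lemma~\ref{lemma:SingularityAndNSu}, namely that the endpoints of each level interval of $\Sigma_+[u]$ are genuine minimizers, i.e.\ belong to $\Sigma_-[v](y)\subset\mathcal{O}[v]$; given that lemma together with monotonicity, the present statement reduces to a short order-theoretic bookkeeping argument, so I do not expect any real obstacle at this stage. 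The work that could conceivably have failed was already carried out in the proofs of Lemmas~\ref{lemma:SingularityAndNSu} and \ref{lemma:Sigma+Lip} (the countability of the set of nontrivial level intervals and the limiting argument identifying their endpoints as minimizers).
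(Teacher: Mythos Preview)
Your proposal is correct and follows essentially the same approach as the paper: use Lemma~\ref{lemma:Sigma+Lip} for the Lipschitz estimate on $\mathcal{O}[v]$, then invoke Lemma~\ref{lemma:SingularityAndNSu} to replace arbitrary $x_1\le x_2$ by points $x_1',x_2'\in\mathcal{O}[v]$ squeezed between them with the same $\Sigma_+[u]$-values. Your version is in fact slightly more explicit than the paper's, in that you spell out the choice $x_1'=\sup(\Sigma_+[u])^{-1}(y_1)$, $x_2'=\inf(\Sigma_+[u])^{-1}(y_2)$ and handle the trivial case $y_1=y_2$ separately.
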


\begin{proof}
  By Lemma \ref{lemma:Sigma+Lip}, we know that  $\Sigma_+[u]$ is Lipschitz  continuous  on $\Sigma_-[v](\mathbb{R})$. Let the Lipschitz constant of $\Sigma_+[u]$ on $\Sigma_-[v](\mathbb{R})$  be denoted by $K''$. For any $x_1\leq x_2\in\mathbb{R}$, by Lemma \ref{lemma:SingularityAndNSu}, there exist points $x_1',x_2'\in\Sigma_-[v](\mathbb{R})$ such that \begin{align*}
    x_1\leq x_1'\leq x_2'\leq x_2,\quad  \Sigma_+[u](x_1')=\Sigma_+[u](x_1)\quad\text{ and }\quad \Sigma_+[u](x_2')=\Sigma_+[u](x_2).
  \end{align*} 
  Then, $$|\Sigma_+[u](x_1)-\Sigma_+[u](x_2)|=|\Sigma_+[u](x_1')-\Sigma_+[u](x_2')|.$$
   Since $x_1',x_2'\in \Sigma_-[v](\mathbb{R})$ and $\Sigma_+[u]$ is Lipschitz continuous on $\Sigma_-[v](\mathbb{R})$ with constant $K''$, we have $$|\Sigma_+[u](x_1')-\Sigma_+[u](x_2')|\leq K''|x_1'-x_2'|.$$
   From the relationship $x_1\leq x_1'\leq x_2'\leq x_2$, it follows that $$|x_1'-x_2'|\leq |x_1-x_2|.$$ Therefore,
  \begin{align*}
    |\Sigma_+[u](x_1)-\Sigma_+[u](x_2)|\leq K''|x_1-x_2|.
  \end{align*}
  This proves that  $\Sigma_+[u]$ is  Lipschitz  continuous  on the whole $\mathbb{R}$.
\end{proof}

\subsection{$\Sigma_+[u]$ propagates singularities}

We first give the following equivalent condition for singular points of functions in $T^-(SC(\mathbb{T}))$.

\begin{proposition}\label{prop:SingularityAndGraph}
  Let $u=T^-[v]$, where $v\in SC(\mathbb{T})$.  Then $y\in\operatorname{Sing}(u)$ if and only if  $(\Sigma_+[u])^{-1}(y)$ is a non-degenerate closed bounded interval.
\end{proposition}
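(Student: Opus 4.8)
The plan is to reduce the statement to the already-established correspondence between $\Sigma_-[v]$ and $\Sigma_+[u]$ together with the differentiability criterion of Lemma~\ref{lemma:WellDefined}. The first step I would take is to record the key translation: applying Lemma~\ref{lemma:WellDefined}(a) with $v$ in the role of the generic function, the set $\Sigma_-[v](y)=\{x\in\mathbb{R}\mid u(y)=v(x)+S(x,y)\}$, which is nonempty by Remark~\ref{remark:coercive}(a), is a singleton if and only if $u=T^-[v]$ is differentiable at $y$. Consequently $y\in\operatorname{Sing}(u)$ if and only if $\Sigma_-[v](y)$ contains at least two distinct points.

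For the forward direction, I would assume $y\in\operatorname{Sing}(u)$ and pick $x_1\neq x_2$ in $\Sigma_-[v](y)$. By Lemma~\ref{lemma:Sigma-andSigma+} applied with $n=1$, each membership $x_i\in\Sigma_-[v](y)$ forces $y\in\Sigma_+[u](x_i)$; since $\Sigma_+[u]$ is single-valued by Lemma~\ref{lemma:singleton}, this means $\Sigma_+[u](x_1)=y=\Sigma_+[u](x_2)$. Hence $(\Sigma_+[u])^{-1}(y)$ contains the two distinct points $x_1,x_2$, so it is not a singleton, and Lemma~\ref{lemma:SingularityAndNSu} then forces it to be a non-degenerate closed bounded interval.

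For the converse, I would suppose $(\Sigma_+[u])^{-1}(y)=[x_-,x_+]$ is a non-degenerate interval. Lemma~\ref{lemma:SingularityAndNSu} provides $x_-,x_+\in\Sigma_-[v](y)$ with $x_-\neq x_+$, so $\Sigma_-[v](y)$ is not a singleton; by the translation recorded in the first step, $u$ then fails to be differentiable at $y$, i.e. $y\in\operatorname{Sing}(u)$. This closes the equivalence.

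The proof is essentially an assembly of the earlier lemmas, so I do not expect a genuine obstacle; the only point that needs care is that the expression $(\Sigma_+[u])^{-1}(y)$ be meaningful, which is exactly why single-valuedness of $\Sigma_+[u]$ (Lemma~\ref{lemma:singleton}) and the dichotomy "singleton versus non-degenerate interval" for its preimages (Lemma~\ref{lemma:SingularityAndNSu}) must be invoked before the two implications are run.
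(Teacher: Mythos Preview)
Your proposal is correct and follows essentially the same route as the paper: both directions invoke Lemma~\ref{lemma:WellDefined}(a) to equate non-differentiability of $u=T^-[v]$ at $y$ with non-uniqueness of $\Sigma_-[v](y)$, pass between $\Sigma_-[v]$ and $\Sigma_+[u]$ via Lemma~\ref{lemma:Sigma-andSigma+}, and use the dichotomy of Lemma~\ref{lemma:SingularityAndNSu} to conclude. Your version is slightly more explicit in flagging the role of Lemma~\ref{lemma:singleton} for single-valuedness, which is implicit in the paper's proof.
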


\begin{proof}
Suppose that $y\in\operatorname{Sing}(u)$. Since $u=T^-[v]$ is  non-differentiable at $y$, by part (a) of  Lemma \ref{lemma:WellDefined}, there exist distinct $x_1 \ne x_2$ such that $\{x_1,x_2\}\subset \Sigma_-[v](y)$. Using Lemma~\ref{lemma:Sigma-andSigma+}, it follows that $\{x_1,x_2\}\subset (\Sigma_+[u])^{-1}(y)$.  By Lemma \ref{lemma:SingularityAndNSu}, $(\Sigma_+[u])^{-1}(y)$ must be a non-degenerate closed bounded interval.

 Suppose that $(\Sigma_+[u])^{-1}(y)$ is a non-degenerate closed bounded interval $[x_-,x_+]$. By Lemma \ref{lemma:SingularityAndNSu}, $x_-,x_+\in \Sigma_-[v](y)$. Using part (a) of Lemma \ref{lemma:WellDefined}, $T^-[v]$ is non-differentiable at $y$. Hence, $y\in\operatorname{Sing}(u)$.
 \end{proof}

\begin{proposition}\label{prop:SingularityPropagate}
  Let $u=T^-[v]$, where $v\in SC(\mathbb{T})$. If $x\in\operatorname{Sing}(v)$, then $\Sigma_+[u](x)\in\operatorname{Sing}(u)$.
\end{proposition}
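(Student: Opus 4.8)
The plan is to identify $y:=\Sigma_+[u](x)$ as a singularity of $u$ by means of the characterization in Proposition~\ref{prop:SingularityAndGraph}: it suffices to show that the fibre $(\Sigma_+[u])^{-1}(y)$ is a non-degenerate closed bounded interval. Observe at the outset that, since $v\in SC(\mathbb{T})$, Lemma~\ref{lemma:singleton} makes $y=\Sigma_+[u](x)$ a genuine point and $x\in(\Sigma_+[u])^{-1}(y)$; in particular the fibre is nonempty and contains $x$, and by Lemma~\ref{lemma:SingularityAndNSu} it is either the single point $\{x\}$ or a non-degenerate closed bounded interval. It thus suffices to exclude the first possibility.

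The key observation is that $x\notin\mathcal{O}[v]$. Indeed, by Lemma~\ref{lemma:visC11} the function $v$ is $C^{1,1}$, in particular differentiable, at every point of $\mathcal{O}[v]$, so $\operatorname{Sing}(v)\cap\mathcal{O}[v]=\emptyset$, while $x\in\operatorname{Sing}(v)$ by hypothesis; equivalently $T^+\circ T^-[v](x)<v(x)$. Feeding this into the equivalence of Lemma~\ref{lemma:Sigma-andSigma+} with $n=1$ (taken for $v$ and $u=T^-[v]$), the relation $x\in\Sigma_-[v](y)$ would amount to the conjunction of $y\in\Sigma_+[u](x)$ and $T^+\circ T^-[v](x)=v(x)$; the first clause holds and the second fails, so $x\notin\Sigma_-[v](y)$.

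Now Lemma~\ref{lemma:SingularityAndNSu} tells us that if $(\Sigma_+[u])^{-1}(y)$ were a single point, that point would necessarily lie in $\Sigma_-[v](y)$. Since $x$ lies in $(\Sigma_+[u])^{-1}(y)$ but not in $\Sigma_-[v](y)$, the single-point alternative is impossible, and hence $(\Sigma_+[u])^{-1}(y)$ is a non-degenerate closed bounded interval. Proposition~\ref{prop:SingularityAndGraph} then gives $\Sigma_+[u](x)=y\in\operatorname{Sing}(u)$, which is the assertion to be proved.

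I do not anticipate a serious obstacle here, since all the analytic content has already been packaged into Lemmas~\ref{lemma:singleton}, \ref{lemma:visC11}, \ref{lemma:Sigma-andSigma+}, \ref{lemma:SingularityAndNSu} and Proposition~\ref{prop:SingularityAndGraph}. The one point deserving care is the choice of obstruction: one might hope to argue directly that $x$ is one of two distinct minimizers of $z\mapsto v(z)+S(z,y)$ and then invoke part (a) of Lemma~\ref{lemma:WellDefined}, but this fails, precisely because $x\notin\Sigma_-[v](y)$ whenever $x\notin\mathcal{O}[v]$. The correct route is therefore to pass through the singleton/interval dichotomy of Lemma~\ref{lemma:SingularityAndNSu} and use $x\notin\mathcal{O}[v]$ to eliminate the singleton case.
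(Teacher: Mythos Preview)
Your proof is correct and follows essentially the same approach as the paper: both set $y=\Sigma_+[u](x)$, use Lemma~\ref{lemma:visC11} to see that $x\notin\mathcal{O}[v]$ (equivalently $x\notin\Sigma_-[v](y)$), and then invoke Lemma~\ref{lemma:SingularityAndNSu} together with Proposition~\ref{prop:SingularityAndGraph}. The only cosmetic difference is that the paper explicitly picks a point $x'\in\Sigma_-[v](y)$ and observes $x'\neq x$ to exhibit two distinct points in the fibre, whereas you argue contrapositively that the singleton alternative of Lemma~\ref{lemma:SingularityAndNSu} would force $x\in\Sigma_-[v](y)$.
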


\begin{proof}
  	Let $x\in\operatorname{Sing}(v)$ and $y=\Sigma_+[u](x)$. Take any $x'\in \Sigma_-[v](y)$. By Lemma \ref{lemma:visC11}, $v$ is differentiable at $x'$, so $x'\ne x$. By Lemma  \ref{lemma:Sigma-andSigma+}, $\Sigma_+[u](x')=y$, implying $\{x,x'\}\subset (\Sigma_+[u])^{-1}(y)$. By Lemma \ref{lemma:SingularityAndNSu}, 	$(\Sigma_+[u])^{-1}(y)$ is a non-degenerate closed bounded interval. Hence by Proposition \ref{prop:SingularityAndGraph}, $y\in\operatorname{Sing}(u)$.
  	  \end{proof}
  	  
  \begin{corollary}
  	Let $u$ be a discrete weak K.A.M. solution. If $x\in\operatorname{Sing}(u)$, then $\Sigma_+[u](x)\in\operatorname{Sing}(u)$.
  \end{corollary}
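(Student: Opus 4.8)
The plan is to verify that a discrete weak K.A.M.\ solution $u$ can itself be written as $T^-[v]$ for a suitable semiconcave $v$, so that Proposition~\ref{prop:SingularityPropagate} applies essentially verbatim. The underlying point is simply that such a $u$ is a fixed point of $T^-$ up to the effective interaction constant.

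First I would recall that a discrete weak K.A.M.\ solution satisfies $u = T^-[u] - \overline{S}$. Hence $u + \overline{S} = T^-[u]$ lies in the image $T^-(C^0(\mathbb{T}))$, which (as recalled at the beginning of Section~\ref{subsection:Regularity}) consists of semiconcave functions; consequently $u + \overline{S}$, and therefore $u$ itself, is semiconcave. Setting $v := u - \overline{S} \in SC(\mathbb{T})$, one computes $T^-[v] = T^-[u - \overline{S}] = T^-[u] - \overline{S} = u$, where the last equality is the weak K.A.M.\ equation. Thus $u = T^-[v]$ with $v$ semiconcave; in particular Lemma~\ref{lemma:singleton} already guarantees that $\Sigma_+[u]$ is single-valued, so the assertion ``$\Sigma_+[u](x) \in \operatorname{Sing}(u)$'' is meaningful.

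Now I would simply invoke Proposition~\ref{prop:SingularityPropagate} with this choice of $v$: if $x \in \operatorname{Sing}(v)$, then $\Sigma_+[u](x) \in \operatorname{Sing}(u)$. Since differentiability is unaffected by subtracting the constant $\overline{S}$, we have $\operatorname{Sing}(v) = \operatorname{Sing}(u - \overline{S}) = \operatorname{Sing}(u)$, and therefore $x \in \operatorname{Sing}(u)$ implies $\Sigma_+[u](x) \in \operatorname{Sing}(u)$, which is exactly the corollary.

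I do not anticipate any genuine difficulty: the only two ingredients are the identity $u = T^-[u - \overline{S}]$ — the weak K.A.M.\ equation rewritten — and the elementary observation that the set $\operatorname{Sing}$ of non-differentiability points is insensitive to additive constants. Everything substantive has already been carried out in Proposition~\ref{prop:SingularityPropagate}, so the corollary is purely a specialization of that result to the self-referential configuration $v \leftrightarrow u$.
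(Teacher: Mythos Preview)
Your argument is correct and is exactly the intended specialization: the paper states the corollary without proof immediately after Proposition~\ref{prop:SingularityPropagate}, and the passage from that proposition to the corollary is precisely the observation $u=T^-[u-\overline{S}]$ with $u-\overline{S}\in SC(\mathbb{T})$ that you spell out.
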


\subsection{$\Sigma_+[u]$ admits a rotation number}
The following lemma is well-known and we provide its proof for selfcontainedness. 
\begin{lemma}\label{lemma:RotationNumber}
    Let $\Phi:\mathbb{R}\rightarrow\mathbb{R}$ be a continuous map such that \begin{enumerate}
        \item [(i)] $\Phi(x+1)=\Phi(x)+1$, for any $x\in\mathbb{R}$;
        \item [(ii)] if $x_1<x_2$, then $\Phi(x_1)\leq \Phi(x_2)$.
    \end{enumerate}
    Then the limit \begin{align*}
        \rho(\Phi):=\lim_{n\to+\infty}\frac{1}{n}\left(\Phi^{n}(x)-x\right)
    \end{align*}
    exists for all $x\in\mathbb{R}$ and is independent of $x$.
\end{lemma}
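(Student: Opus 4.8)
The plan is to run the standard Poincaré argument for monotone degree-one circle maps, using subadditivity. First I would observe that the map $n\mapsto \max_{x\in\mathbb{R}}\big(\Phi^{n}(x)-x\big)$ is subadditive: writing $a_n:=\sup_{x}\big(\Phi^n(x)-x\big)$ (which is finite by periodicity (i), since $\Phi^n(x)-x$ is $1$-periodic and continuous), one has for any $x$
\[
\Phi^{n+m}(x)-x=\big(\Phi^{m}(\Phi^{n}(x))-\Phi^{n}(x)\big)+\big(\Phi^{n}(x)-x\big)\le a_m+a_n,
\]
so $a_{n+m}\le a_n+a_m$. By Fekete's subadditive lemma the limit $\rho:=\lim_{n\to\infty}a_n/n=\inf_n a_n/n$ exists in $[-\infty,+\infty)$; a symmetric argument with $b_n:=\inf_x(\Phi^n(x)-x)$ gives $\lim b_n/n=\sup_n b_n/n$. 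Both limits are finite because $a_n-b_n\le 1$: indeed, by (i) the function $\Phi^n(x)-x$ has oscillation at most $1$ over any period (if it oscillated by more than $1$, monotonicity (ii) together with $\Phi^n(x+1)=\Phi^n(x)+1$ would be violated — more precisely, for $x_1<x_2<x_1+1$ we get $\Phi^n(x_1)\le\Phi^n(x_2)\le\Phi^n(x_1)+1$, hence $-1\le (\Phi^n(x_2)-x_2)-(\Phi^n(x_1)-x_1)\le 1$).

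Next I would show $\lim a_n/n=\lim b_n/n$, which forces independence of $x$. Since $b_n\le \Phi^n(x)-x\le a_n$ for every $x$ and every $n$, and $0\le a_n-b_n\le 1$, dividing by $n$ and letting $n\to\infty$ gives that $\tfrac1n(\Phi^n(x)-x)$ is squeezed between two sequences with the same limit $\rho$; hence $\rho(\Phi)=\lim_{n\to\infty}\tfrac1n(\Phi^n(x)-x)$ exists and equals $\rho$ for all $x\in\mathbb{R}$.

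The only mildly delicate point — and the one I would treat most carefully — is the uniform oscillation bound $a_n-b_n\le 1$, because it is what makes the two subadditive limits coincide and the answer $x$-independent; everything else is Fekete plus a squeeze. For that bound one uses only (i) and (ii): given any $x_1<x_2$, choose the integer $k$ with $x_1\le x_2-k<x_1+1$ wait — more cleanly, for arbitrary $x_1,x_2\in\mathbb{R}$ pick $m\in\mathbb{Z}$ with $x_2+m\in[x_1,x_1+1)$; then monotonicity gives $\Phi^n(x_1)\le\Phi^n(x_2+m)=\Phi^n(x_2)+m\le\Phi^n(x_1)+1$, so $\big(\Phi^n(x_2)-x_2\big)-\big(\Phi^n(x_1)-x_1\big)=\big(\Phi^n(x_2)+m-(x_2+m)\big)-\big(\Phi^n(x_1)-x_1\big)\in[-1,1]$, and taking sup/inf over $x_1,x_2$ yields $a_n-b_n\le 1$. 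I would remark that continuity of $\Phi$ is only used to guarantee the suprema and infima defining $a_n,b_n$ are attained (equivalently, to ensure $\Phi^n(x)-x$ is a genuine continuous $1$-periodic function so $a_n,b_n\in\mathbb{R}$), while the monotonicity hypothesis (ii) carries the oscillation estimate.
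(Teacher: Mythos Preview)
Your proof is correct and follows essentially the same approach as the paper: both arguments hinge on the oscillation bound $a_n-b_n\le 1$ (derived from monotonicity and periodicity) together with sub/superadditivity of the sup and inf of $\Phi^n-\mathrm{id}$, followed by a squeeze. The only cosmetic difference is that you invoke Fekete's lemma by name, whereas the paper unwinds the subadditive limit by hand via the division $n=qm+r$.
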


\begin{proof}
  From property (i), by induction, we have: \begin{align*}
     \Phi^m(x+1)=\Phi^m(x)+1,\quad  \forall x\in\mathbb{R}, m\in\mathbb{N}_+.
  \end{align*}
  Thus, $\Phi^m-\operatorname{id}$ is $1$-periodic. Define \begin{align*}
    \alpha_m:=\inf_{\mathbb{R}} \{\Phi^m-\operatorname{id}\},\quad \quad \beta_m:=\sup_{\mathbb{R}} \{\Phi^m-\operatorname{id}\}.
  \end{align*}
  From property (ii), for any $x_1\leq  x_2\leq x_1+1$,   we have:$$\Phi^m(x_1)\leq \Phi^m(x_2)\leq \Phi^m(x_1)+1.$$ This implies 
 $$|(\Phi^m(x_1)-x_1)-(\Phi^m(x_2)-x_2)|\leq 1,$$ and hence $\beta_m-\alpha_m\leq 1$.

  Let  $n=qm+r$, where $ 0\leq r<m$. Then, using $ \alpha_m\leq \Phi^m(x)-x\leq \beta_m,$ we obtain: \begin{align*}
    q\alpha_m\leq \Phi^{qm}(x)-x\leq q\beta_m.
  \end{align*}
  Additionally, $$r\alpha_1\leq \Phi^r(x)-x \leq r\beta_1.$$
  Combining these, we have:\begin{align*}
    q\alpha_m+r\alpha_1\leq \Phi^{n}(x)-x \leq q\beta_m+r\beta_1.
  \end{align*}
  Dividing through by $n$, we get:
   \begin{align*}
    \frac{q\alpha_m+r\alpha_1}{n}\leq \frac{\Phi^{n}(x)-x}{n}\leq \frac{q\beta_m+r\beta_1}{n}.
  \end{align*}
  As $n\to+\infty$, $q/n\to 1/m$, and the terms involving $r/n$ vanish. Moreover, since $\beta_m-\alpha_m\leq 1$, the difference between the bounds $(\beta_m-\alpha_m)/m$ vanishes as $m\to+\infty$.
  Thus $\Lim{n\to+\infty}\frac{1}{n}\left(\Phi^{n}(x)-x\right)$ exists and is independent of $x$.
\end{proof}

By combining Proposition \ref{prop:LiftOfCircleMap}  and Lemma \ref{lemma:RotationNumber}, we proved that if $u\in T^-(SC(\mathbb{T}))$, then $\Sigma_+[u]$ admits a rotation number $\rho(\Sigma_+[u])$ for positive iterations for any $u\in T^-[SC(\mathbb{T})]$.

\begin{proposition}\label{prop:RotationNumber}
	If $u\in T^-(SC(\mathbb{T}))$ and $u$ is a sub-action, then $\rho(\Sigma_+[u])$ is the rotation number of the Aubry set.
\end{proposition}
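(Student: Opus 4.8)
The plan is to pin down one orbit of the single-valued map $\Sigma_+[u]$ along which the rotation number is visibly that of the Aubry set, and then to transfer this value to every orbit by Lemma~\ref{lemma:RotationNumber}. Since $u$ is a sub-action and $S$ satisfies Hypotheses~\ref{hypothesis:S1}--\ref{hypothesis:S3}, Proposition~\ref{prop:PropertyOfAubry}(i) gives that the projected Aubry set $\mathcal{A}$ is non-empty, so I would fix one point $x_0\in\mathcal{A}$ and invoke Proposition~\ref{prop:PropertyOfAubry}(iv) to obtain the bi-infinite sequence $\{x_n\}_{n\in\mathbb{Z}}$ passing through $x_0$ that is calibrated by $u$, i.e.\ $u(x_{n+1})-u(x_n)=S(x_n,x_{n+1})-\overline{S}$ for all $n$. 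By Lemma~\ref{lemma:RotationNumber} it then suffices to evaluate the rotation number on this single orbit.

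The first key step is to show $\Sigma_+[u](x_n)=x_{n+1}$ for every $n$. Rearranging the calibration identity gives $u(x_{n+1})-S(x_n,x_{n+1})=u(x_n)-\overline{S}$, while the sub-action inequality with base point $x_n$ gives $u(y)-S(x_n,y)\le u(x_n)-\overline{S}$ for every $y\in\mathbb{R}$. Hence $T^+[u](x_n)=\sup_{y}\{u(y)-S(x_n,y)\}=u(x_n)-\overline{S}$, with the supremum attained at $y=x_{n+1}$, so $x_{n+1}\in\Sigma_+[u](x_n)$. Because $u\in T^-(SC(\mathbb{T}))$, Lemma~\ref{lemma:singleton} forces $\Sigma_+[u](x_n)$ to be a singleton, so in fact $\Sigma_+[u](x_n)=x_{n+1}$. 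Iterating, $\Sigma_+^n[u](x_0)=x_n$ for all $n\ge 0$, and therefore $\tfrac1n(\Sigma_+^n[u](x_0)-x_0)=\tfrac1n(x_n-x_0)$.

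The second key step is to identify $\lim_{n\to+\infty}(x_n-x_0)/n$. Here I would combine Proposition~\ref{prop:PropertyOfAubry}(iii)--(v) with Lemma~\ref{lemma:RegularityOfSubaction} to see that $\{x_n\}$ is the $\pi_1$-projection of an $F$-orbit contained in the Aubry set $\mathcal{A}^*$ --- equivalently, a minimal (Birkhoff) configuration of the exact twist map in the sense of Aubry--Le Daeron --- and then quote the classical fact (reviewed in Appendix~\ref{sec:appendixB}) that such a configuration has a well-defined rotation number, equal to $\alpha'(c)$, the rotation number of the Aubry--Mather set at cohomology class $c$. Combining the two steps yields $\rho(\Sigma_+[u])=\lim_n\tfrac1n(\Sigma_+^n[u](x_0)-x_0)=\alpha'(c)$, and by Proposition~\ref{prop:LiftOfCircleMap} together with Lemma~\ref{lemma:RotationNumber} this value is independent of the chosen point; hence $\rho(\Sigma_+[u])$ equals the rotation number of the Aubry set. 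The step I expect to be the main obstacle is exactly this second one: identifying the abstractly-defined $u$-calibrated configuration through an Aubry point with an honest minimal orbit of the twist map and importing its rotation number from classical Aubry--Le Daeron--Mather theory; the identity $\Sigma_+[u](x_n)=x_{n+1}$, the single-valuedness, and the passage from one orbit to all orbits are routine given the results already in hand.
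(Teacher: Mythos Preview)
Your proposal is correct and follows essentially the same route as the paper: pick a $u$-calibrated sequence in the projected Aubry set, use the sub-action inequality together with Lemma~\ref{lemma:singleton} to obtain $\Sigma_+[u](x_n)=x_{n+1}$, and then read off the rotation number from this orbit via Lemma~\ref{lemma:RotationNumber}. The paper's proof is in fact terser than yours at the second step---it simply asserts that $\lim_n (x_n-x_0)/n$ is the rotation number of the Aubry set---so your remark that this identification ultimately rests on the classical Aubry--Le Daeron--Mather theory reviewed in Appendix~\ref{sec:appendixB} is a useful clarification rather than a gap.
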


\begin{proof}
  	Let $\{x_n\}_{n\in\mathbb{Z}}$ be a $u$-calibrated sequence contained in the projected Aubry set. For any $n\in\mathbb{Z}$, we have: $$u(x_{n+1})-S(x_n,x_{n+1})=u(x_n)-\overline{S}.$$
  	 Since $u$ is a sub-action, for any $y\in\mathbb{R}$:  \begin{align*}
  		u(x_n)-\overline{S}
  		\geq u(y)-S(x_n,y).
  	\end{align*} 
  	Thus: $$u(x_{n+1})-S(x_n,x_{n+1})\geq u(y)-S(x_n,y),$$
  	which implies $x_{n+1}\in\Sigma_+[u](x_{n})$. By Lemma \ref{lemma:singleton},  $x_{n+1}=\Sigma_+[u](x_{n})$. The rotation number is given by:$$\lim_{n\to+\infty}\frac{1}{n}(\Sigma_+^n[u](x_0)-x_0)=\lim_{n\to+\infty}\frac{1}{n}(x_n-x_0).$$
  	This is the rotation number of the Aubry set.  	  
  \end{proof}
  
  \begin{corollary}
  	Let $u$ be a discrete weak K.A.M. solution. Then $\operatorname{Sing}(u)$ is  a forward invariant set under $\Sigma_+[u]$, with the same rotation number as the Aubry set.
  \end{corollary}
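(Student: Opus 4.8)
The plan is to observe that this corollary is simply the specialization of the results of this section to a discrete weak K.A.M. solution, so all that must be checked is that such a $u$ meets the standing hypotheses of Propositions~\ref{prop:SingularityPropagate} and~\ref{prop:RotationNumber}, i.e.\ that $u\in T^-(SC(\mathbb{T}))$ and that $u$ is a sub-action; the rest is pure assembly. First I would rewrite the (negative) weak K.A.M. equation $T^-[u]=u+\overline{S}$: since $T^-$ commutes with the addition of a constant, this reads $u=T^-[u-\overline{S}]$, so putting $v:=u-\overline{S}$ we get $u=T^-[v]$. Because the image of $T^-$ consists of semiconcave functions with a uniform linear modulus, $u$ is semiconcave, hence so is $v=u-\overline{S}$; thus $v\in SC(\mathbb{T})$ and $u\in T^-(SC(\mathbb{T}))$. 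In particular $\Sigma_+[u]$ is single-valued by Lemma~\ref{lemma:singleton}, so "$\Sigma_+[u]$" genuinely denotes a map $\mathbb{R}\to\mathbb{R}$. The same equation also gives $u\le T^-[u]-\overline{S}$, so $u$ is a sub-action.

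With these facts the two assertions follow by citation. For forward invariance, note that $\operatorname{Sing}(u)=\operatorname{Sing}(v)$ since $u$ and $v$ differ by the constant $\overline{S}$; then Proposition~\ref{prop:SingularityPropagate}, applied to $u=T^-[v]$ with $v\in SC(\mathbb{T})$, says that $x\in\operatorname{Sing}(u)$ implies $\Sigma_+[u](x)\in\operatorname{Sing}(u)$, which is exactly forward invariance of $\operatorname{Sing}(u)$ under $\Sigma_+[u]$. For the rotation number, Proposition~\ref{prop:LiftOfCircleMap} makes $\Sigma_+[u]$ a non-decreasing continuous lift of a degree-$1$ circle map, so Lemma~\ref{lemma:RotationNumber} yields the existence and $x$-independence of $\rho(\Sigma_+[u])=\lim_{n\to+\infty}\frac1n\big(\Sigma_+^n[u](x)-x\big)$; and since $u\in T^-(SC(\mathbb{T}))$ is a sub-action, Proposition~\ref{prop:RotationNumber} identifies $\rho(\Sigma_+[u])$ with the rotation number of the Aubry set.

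The main obstacle is essentially nonexistent: the only point requiring any care is the bookkeeping in the first step — that $T^-[u+c]=T^-[u]+c$ for constants $c$, so the weak K.A.M. equation really does rewrite as $u=T^-[u-\overline{S}]$, and that semiconcavity of $T^-[v]$ descends to $v$ through $v=u-\overline{S}$ — after which both conclusions are immediate quotations of Propositions~\ref{prop:SingularityPropagate} and~\ref{prop:RotationNumber}.
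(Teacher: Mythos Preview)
Your proposal is correct and matches the paper's approach: the paper states this corollary without proof, treating it as immediate from Propositions~\ref{prop:SingularityPropagate} and~\ref{prop:RotationNumber}, and your argument supplies exactly the routine verification that a discrete weak K.A.M. solution $u$ satisfies the hypotheses of those propositions (namely $u=T^-[u-\overline{S}]\in T^-(SC(\mathbb{T}))$ and $u$ is a sub-action). The only cosmetic point is that your phrase ``semiconcavity of $T^-[v]$ descends to $v$'' is slightly backwards in wording---what you mean, and what your argument actually shows, is that $u$ is semiconcave because it lies in the image of $T^-$, and then $v=u-\overline{S}$ inherits semiconcavity trivially.
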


\section{Proofs of Theorems \ref{thm:regularization}-\ref{thm:main3}}\label{sec:consequences}

In this section, we study several consequences of the dynamics $\Sigma_+[u]$ and prove the associated theorems. We operate under the assumption that the function $S$ satisfies Hypothesis~\ref{hypothesis:S1}, Hypothesis \ref{hypothesis:S2}, and Hypothesis \ref{hypothesis:S3}. The structure of this section is as follows:\begin{itemize}
	\item In Section 4.1, we prove Theorem~\ref{thm:regularization}.
	\item In Section 4.2, we prove Theorem~\ref{thm:main2}.
	\item In Section 4.3, we prove Theorems~\ref{thm:SingularDynamics} and \ref{thm:main3}.
\end{itemize}
By analyzing the structure and properties related to the dynamics $\Sigma_+[u]$, we aim to uncover its significant behaviors and applications under different scenarios.

\subsection{Regularization results}

This result serves as a discrete analogue of Patrick Bernard’s regularization theorem \cite{Ber07}. Notice that \cite{zavidovique2023} states that if $u$ is a discrete weak K.A.M. solution, then $T^+[u]$ is  $C^1$.

\begin{proposition}\label{prop:Regularization}
	If $v\in SC(\mathbb{T})$, then $T^+\circ T^-[v]$ is $C^1$. Moreover, if the discrete standard map $F$ is  locally bi-Lipschitz, then $T^+\circ T^-[v]$ is $C^{1,1}$.	
	\end{proposition}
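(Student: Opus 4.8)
\textbf{Proof proposal for Proposition~\ref{prop:Regularization}.}

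The plan is to leverage the singular dynamics $\Sigma_+[u]$ with $u=T^-[v]$ together with the regularity already established. Write $u=T^-[v]$. Since $v\in SC(\mathbb{T})$, Lemma~\ref{lemma:singleton} tells us $\Sigma_+[u]$ is single-valued, and Proposition~\ref{prop:LiftOfCircleMap} gives that it is a non-decreasing continuous lift of a degree-$1$ circle map. The key point is the identity $T^+[u]=T^+\circ T^-[v]$, so the claim is precisely that $T^+[u]$ is $C^1$ (and $C^{1,1}$ in the bi-Lipschitz case). By item (b) of Lemma~\ref{lemma:WellDefined}, $T^+[u]$ is differentiable at $x$ precisely when $\Sigma_+[u](x)$ is a singleton — which by Lemma~\ref{lemma:singleton} holds for \emph{every} $x\in\mathbb{R}$. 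Hence $T^+[u]$ is everywhere differentiable.

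Next I would identify the derivative explicitly. Since $T^+[u]$ is semiconvex (it lies in $T^+(B(\mathbb{T}))$) and differentiable everywhere, it is $C^1$ by the standard fact (item~(iv) of Proposition~\ref{prop:PropertyOfSemiconcave} in Appendix~A, i.e. a semiconvex function differentiable everywhere is $C^1$). This already proves the first assertion. To get the explicit formula needed for the $C^{1,1}$ refinement, I would use item (b) of Lemma~\ref{lemma:RegularityOfSubaction}: if $y=\Sigma_+[u](x)$ then $-\partial_1 S(x,y)\in\nabla^- T^+[u](x)$, and since $T^+[u]$ is differentiable at $x$ we get $\d T^+[u](x)=-\partial_1 S(x,\Sigma_+[u](x))$. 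Thus $\d T^+[u]$ is the composition of the continuous map $x\mapsto (x,\Sigma_+[u](x))$ with the $C^1$ — hence locally Lipschitz — map $(x,y)\mapsto -\partial_1 S(x,y)$ (using that $S$ is $C^1$, Hypothesis~\ref{hypothesis:S2}, and locally semiconcave so $\partial_1 S$ is locally Lipschitz).

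For the $C^{1,1}$ statement, under the extra assumption that $F$ is locally bi-Lipschitz, I would show $\Sigma_+[u]$ is globally Lipschitz. The idea mirrors Proposition~\ref{proposition:SigmaLipschitz}: on the set $\mathcal{O}[v]=\Sigma_-[v](\mathbb{R})$, Lemma~\ref{lemma:Sigma+Lip} gives Lipschitz continuity of $\Sigma_+[u]$ with some constant $K''$ (this uses $F$ locally Lipschitz, which follows from locally bi-Lipschitz), and Lemma~\ref{lemma:SingularityAndNSu} lets us replace arbitrary $x_1\le x_2$ by comparison points $x_1',x_2'\in\mathcal{O}[v]$ with the same $\Sigma_+[u]$-values and $|x_1'-x_2'|\le|x_1-x_2|$; monotonicity then transfers the Lipschitz bound to all of $\mathbb{R}$. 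Once $\Sigma_+[u]$ is globally Lipschitz, the formula $\d T^+[u](x)=-\partial_1 S(x,\Sigma_+[u](x))$ exhibits $\d T^+[u]$ as a composition of Lipschitz maps on any compact set (using $\partial_1 S$ locally Lipschitz and the a priori bound $\|\Sigma_+[u](x)-x\|\le C$ from the degree-$1$ lift property, which keeps the argument in a fixed compact region modulo the $\mathbb{Z}$-action), and periodicity upgrades this to a uniform Lipschitz bound. Hence $T^+\circ T^-[v]$ is $C^{1,1}$.

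The main obstacle I anticipate is the transition from the Lipschitz estimate on $\mathcal{O}[v]$ to all of $\mathbb{R}$ — i.e. invoking Lemma~\ref{lemma:SingularityAndNSu} correctly to produce the comparison points $x_i'$ — and, secondarily, checking that the relevant compositions are genuinely Lipschitz on the noncompact domain $\mathbb{R}$ rather than merely locally. Both are handled by periodicity ($\Sigma_+[u]-\mathrm{id}$ and $\d T^+[u]$ are $\mathbb{Z}$-periodic) plus the uniform bound on $\Sigma_+[u](x)-x$, so the seeming global issues reduce to compact ones. The differentiability-everywhere step, by contrast, is essentially immediate from Lemma~\ref{lemma:singleton} and Lemma~\ref{lemma:WellDefined}.
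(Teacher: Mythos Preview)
Your approach is essentially the same as the paper's, and the $C^1$ part is fine (though your citation of item~(iv) of Proposition~\ref{prop:PropertyOfSemiconcave} for ``semiconvex $+$ everywhere differentiable $\Rightarrow C^1$'' is wrong --- that item is about reachable differentials; the paper instead argues directly from the formula $\d T^+[u](x)=-\partial_1 S(x,\Sigma_+[u](x))$ and continuity of both factors).

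There is, however, a genuine gap in your $C^{1,1}$ argument. You claim that $(x,y)\mapsto\partial_1 S(x,y)$ is locally Lipschitz ``using that $S$ is $C^1$\ldots and locally semiconcave so $\partial_1 S$ is locally Lipschitz.'' Neither of these suffices: under Hypothesis~\ref{hypothesis:S2}, $S$ is only $C^1$, so $\partial_1 S$ is merely continuous, and local semiconcavity of $S$ gives no Lipschitz control on $\partial_1 S$ at points where $S$ is not $C^{1,1}$. This is precisely where the \emph{bi}-Lipschitz hypothesis on $F$ must be used, not just the forward Lipschitz direction you invoke for $\Sigma_+[u]$. The paper closes the gap by writing
\[
\partial_1 S(x,y)=-\pi_2\circ F^{-1}\bigl(y,\pi_2\circ F(x,y)\bigr),
\]
so that local Lipschitz continuity of $F^{-1}$ (the ``bi'' part) yields local Lipschitz continuity of $\partial_1 S$. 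Once you have this, your composition-plus-periodicity argument goes through exactly as you describe.
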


\begin{proof}
From Lemma \ref{lemma:WellDefined} (item (b)), for any $x\in\mathbb{R}^d$, $\Sigma_+[u](x)$ is a singleton if and only if $T^+[u]$ is differentiable at $x$. Let $u:=T^-[v]$.  By  Lemmas \ref{lemma:singleton}  and \ref{lemma:WellDefined}, $\Sigma_+[u](x)$ is a singleton for all $x\in\mathbb{R}^d$. This implies that $T^+[u]$ is differentiable everywhere. From item (b) of Lemma \ref{lemma:RegularityOfSubaction}, we have: \begin{equation}\label{eq:-partialS=dT+}
	-\partial_1S(x,\Sigma_+[u](x))=\d T^+[u](x), \quad \forall x\in\mathbb{R}.
\end{equation}
Since $\partial_1S$ and $\Sigma_+[u]$ are continuous, it follows that $T^+[u]$ is $C^1$. 

Assume $F$ is locally bi-Lipschitz, meaning both $F$ and $F^{-1}$ are locally Lipschitz  continuous. Recall that $$\partial_1S(x,y)=-\pi_2\circ F^{-1}(y,\pi_2\circ F(x,y)).$$ Since $F^{-1}$ is locally Lipschitz  continuous, $\partial_1S$ is also locally Lipschitz continuous.

	Denote $M:=\|\Sigma_+[u]\|_{C^0[0,1]}$. On the compact set  $[0,1]\times [-M,M]$, $\partial_1S$ is Lipschitz  continuous. From Lemma \ref{proposition:SigmaLipschitz}, $\Sigma_+[u]$ is Lipschitz  continuous  on $[0,1]$. Using the equation \eqref{eq:-partialS=dT+}, we conclude that $\d T^+[u]$ is Lipschitz  continuous  on $[0,1]$. Since $T^+[u]$ is $1$-periodic, the Lipschitz continuity of  $\d T^+[u]$ extends to $\mathbb{R}$. Thus, $T^+[u]$ is $C^{1,1}$ globally.
\end{proof}

\begin{corollary}
If the discrete standard map $F$ is locally bi-Lipschitz, then for any bounded function $u:\mathbb{T}\rightarrow\mathbb{R}$, $T^{+}\circ T^{-2}[u]$ is $C^{1,1}$.
\end{corollary}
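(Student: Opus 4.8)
The plan is to reduce the statement to Proposition~\ref{prop:Regularization} by observing that a single application of $T^-$ to any bounded function already lands in $SC(\mathbb{T})$. Concretely, I would write $T^{+}\circ T^{-2}[u] = T^{+}\circ T^{-}[v]$ with $v := T^{-}[u]$, and then invoke the regularization result for $T^{+}\circ T^{-}$ acting on semiconcave data.

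First I would recall that, under Hypothesis~\ref{hypothesis:S1} and Hypothesis~\ref{hypothesis:S2}, the image $T^{-}(B(\mathbb{T}))$ consists of semiconcave functions with a common linear modulus; this is the statement quoted at the start of Section~\ref{subsection:Regularity} (from \cite[Proposition 2.4.10]{zavidovique2023}) and it uses only boundedness of $u$, not continuity. Moreover $T^{-}[u]$ is automatically continuous and $\mathbb{Z}$-periodic by Remark~\ref{remark:LaxOleinik}(a), so $v = T^{-}[u]$ genuinely belongs to $SC(\mathbb{T})$ as required. Then I would apply Proposition~\ref{prop:Regularization} directly to $v$: since $v\in SC(\mathbb{T})$, the function $T^{+}\circ T^{-}[v]$ is $C^{1}$, and if in addition the discrete standard map $F$ is locally bi-Lipschitz then $T^{+}\circ T^{-}[v]$ is $C^{1,1}$. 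Because $T^{+}\circ T^{-}[v] = T^{+}\circ T^{-}\circ T^{-}[u] = T^{+}\circ T^{-2}[u]$, this is exactly the claim.

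There is no serious obstacle here: all the work is carried by Proposition~\ref{prop:Regularization}, whose proof combines Lemma~\ref{lemma:singleton} and Lemma~\ref{lemma:WellDefined} (singlevaluedness of $\Sigma_{+}$ and differentiability of $T^{+}$), Lemma~\ref{lemma:RegularityOfSubaction} (the identity $-\partial_1 S(x,\Sigma_{+}[u](x)) = \d T^{+}[u](x)$), and Proposition~\ref{proposition:SigmaLipschitz} (global Lipschitz continuity of $\Sigma_{+}$). The only point that deserves a line of care is that the semiconcavity estimate for $T^{-}[u]$ must be applied to a merely bounded $u$ — which is precisely where Hypothesis~\ref{hypothesis:S2} is needed — so one should cite that statement explicitly rather than silently assume $u$ continuous.
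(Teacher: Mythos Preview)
Your proposal is correct and matches the paper's approach exactly: the paper states this as an immediate corollary of Proposition~\ref{prop:Regularization} without writing out a proof, and your argument---set $v := T^{-}[u] \in SC(\mathbb{T})$ by the semiconcavity of $T^{-}(B(\mathbb{T}))$, then apply the proposition---is precisely the intended one-line justification. Your remark that one must cite the semiconcavity result for merely bounded (not continuous) inputs is a valid point of care.
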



\subsection{An explanation of Arnaud's observation}

To prove Theorem \ref{thm:main2}, we first derive an explicit formula for $(\Sigma_+[u])^{-1}$ as stated in Lemma \ref{lemma:EquivalentDefinitionOfSingularities}.

\begin{lemma}\label{lemma:EquivalentDefinitionOfSingularities}
  Let $u=T^-[v]$ with $v\in SC(\mathbb{T})$ and assume $F$ is the discrete standard map. Then for any $y\in\mathbb{R}$,  \begin{align*}
    (\Sigma_+[u])^{-1}(y)=\pi_1\circ F^{-1}(y,\nabla^+ u(y)).
  \end{align*}
\end{lemma}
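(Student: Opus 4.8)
The plan is to unwind the definitions on both sides and match them using the characterization of $\Sigma_+$ via the discrete standard map $F$ obtained in Lemma~\ref{lemma:Sigma+Lip} and Lemma~\ref{lemma:RegularityOfSubaction}, together with the singularity/interval description from Lemma~\ref{lemma:SingularityAndNSu} and Proposition~\ref{prop:SingularityAndGraph}. Fix $y\in\mathbb{R}$. First I would recall that by Lemma~\ref{lemma:SingularityAndNSu} the set $(\Sigma_+[u])^{-1}(y)$ is either a singleton $\{x\}$ with $x\in\Sigma_-[v](y)$, or a non-degenerate closed interval $[x_-,x_+]$ with both endpoints in $\Sigma_-[v](y)$. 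In either case, every point $x$ of $(\Sigma_+[u])^{-1}(y)$ that lies in $\Sigma_-[v](y)$ satisfies, by item (a) of Lemma~\ref{lemma:RegularityOfSubaction}, $-\partial_1 S(x,y)\in\nabla^- v(x)$ and, crucially, $\partial_2 S(x,y)\in\nabla^+ T^-[v](y)=\nabla^+ u(y)$; hence by the definition of $F$ in Remark~\ref{remark:F}, $(x,-\partial_1 S(x,y))\in\operatorname{Graph}(\d v)$ maps under $F$ to $(y,\partial_2 S(x,y))$ with $\partial_2 S(x,y)\in\nabla^+u(y)$. This shows $x\in\pi_1\circ F^{-1}(y,\nabla^+u(y))$, giving the inclusion $(\Sigma_+[u])^{-1}(y)\subset\pi_1\circ F^{-1}(y,\nabla^+ u(y))$ — here I must also handle the case where $(\Sigma_+[u])^{-1}(y)$ is a non-degenerate interval and an interior point $x_0$ is \emph{not} a priori in $\Sigma_-[v](y)$; but the interior points are still controlled because $\Sigma_+[u]$ is constant equal to $y$ on $[x_-,x_+]$, and one argues that for an interior point $x_0$ the subgradient relation still forces $-\partial_1 S(x_0,y)\in\nabla^- u(x_0)$ wait — more carefully, one uses that $y\in\Sigma_+[u](x_0)$ directly with item (b) of Lemma~\ref{lemma:RegularityOfSubaction} to get $\partial_2 S(x_0,y)\in\nabla^+ u(y)$ and $-\partial_1 S(x_0,y)\in\nabla^- T^+[u](x_0)$, which is exactly what is needed to put $(x_0,-\partial_1 S(x_0,y))$ into the domain of $F$ and land in $(y,\nabla^+u(y))$.

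For the reverse inclusion, take $p\in\nabla^+ u(y)$ and let $(x,q):=F^{-1}(y,p)$, so that by Remark~\ref{remark:F} we have $q=-\partial_1 S(x,y)$ and $p=\partial_2 S(x,y)$. I would then show $\Sigma_+[u](x)=y$, i.e. $u(y)-S(x,y)=T^+[u](x)=\sup_{z}\{u(z)-S(x,z)\}$. The inequality $u(y)-S(x,y)\le T^+[u](x)$ is trivial; for the reverse one uses that $p=\partial_2 S(x,y)\in\nabla^+u(y)$ means $u(z)\le u(y)+\partial_2 S(x,y)(z-y)+o(|z-y|)$, which combined with the first-order behavior of $z\mapsto S(x,z)$ at $z=y$ gives that $y$ is a critical point of $z\mapsto u(z)-S(x,z)$; to upgrade ``critical point'' to ``global max'' I would invoke the concavity-type structure coming from $u=T^-[v]$ — more precisely, since $\partial_2 S(x,\cdot)$ is a homeomorphism (Hypothesis~\ref{hypothesis:S2}) and $u$ is semiconcave with $\nabla^+u(y)\ni\partial_2 S(x,y)$, standard weak-KAM/twist arguments (the one-dimensional ordering from the Non-crossing Lemma, Hypothesis~\ref{hypothesis:S3}, exactly as in the proof of Lemma~\ref{lemma:SigmaNondecreasing}) show that any maximizer $y'\in\Sigma_+[u](x)$ must coincide with $y$: if $y'\ne y$ the corresponding subgradient $\partial_2 S(x,y')\in\nabla^+u(y')$ would violate monotonicity of $x\mapsto\Sigma_+[u](x)$ combined with injectivity considerations, forcing $y'=y$. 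Thus $x\in(\Sigma_+[u])^{-1}(y)$, completing the equality.

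The main obstacle I anticipate is making the ``critical point $\Rightarrow$ maximizer'' step fully rigorous in the presence of singularities of $u$: one cannot simply differentiate, and the cleanest route is probably to avoid it entirely by instead showing directly that $x\in\Sigma_-[v](y)$. Indeed, given $p\in\nabla^+u(y)$ with $p=\partial_2 S(x,y)$, $q=-\partial_1 S(x,y)$, I would argue that $v$ is differentiable at $x$ with $\d v(x)=q$ (using Lemma~\ref{lemma:visC11} or the ferromagnetic homeomorphism property to identify $x$ as the unique preimage), hence $x$ realizes the infimum defining $T^-[v](y)=u(y)$, i.e. $x\in\Sigma_-[v](y)$; then Lemma~\ref{lemma:Sigma-andSigma+} immediately yields $y\in\Sigma_+[u](x)$, and by Lemma~\ref{lemma:singleton} (singleton-valuedness) this forces $\Sigma_+[u](x)=y$, so $x\in(\Sigma_+[u])^{-1}(y)$. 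This sidesteps the variational subtlety. A secondary technical point is checking that $F^{-1}(y,\nabla^+u(y))$ is well-defined as a set (which is immediate since $F$ is a homeomorphism by Remark~\ref{remark:F}) and that $\pi_1$ of it is genuinely the whole interval $[x_-,x_+]$ rather than just its endpoints — this follows because $\nabla^+u(y)$ is itself a closed interval $[\partial_2 S(x_-,y),\partial_2 S(x_+,y)]$ (by semiconcavity and item (a) of Lemma~\ref{lemma:RegularityOfSubaction} applied at the two endpoints), and the homeomorphism $\partial_2 S(\cdot,y)^{-1}$ carries it onto $[x_-,x_+]$.
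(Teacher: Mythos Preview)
Your forward inclusion $(\Sigma_+[u])^{-1}(y)\subset\pi_1\circ F^{-1}(y,\nabla^+u(y))$ is correct and in fact more direct than the paper's route: for any $x$ with $\Sigma_+[u](x)=y$, item~(b) of Lemma~\ref{lemma:RegularityOfSubaction} immediately gives $\partial_2 S(x,y)\in\nabla^+u(y)$, whence $x=\pi_1\circ F^{-1}(y,\partial_2 S(x,y))$ lies in the right-hand side. This handles interior points of the interval cleanly.

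The reverse inclusion, however, has a genuine gap. Your ``Approach~2'' asserts that for every $p\in\nabla^+u(y)$ the point $x:=\pi_1\circ F^{-1}(y,p)$ lies in $\Sigma_-[v](y)$, so that $v$ is differentiable there. This is false in general: only the \emph{extreme} points of the interval $\nabla^+u(y)$ correspond to points of $\Sigma_-[v](y)$. Indeed, if $v$ is singular at some $x_0$, then Proposition~\ref{prop:SingularityPropagate} forces $y_0:=\Sigma_+[u](x_0)\in\operatorname{Sing}(u)$, so $(\Sigma_+[u])^{-1}(y_0)$ is a non-degenerate interval; by Lemma~\ref{lemma:SingularityAndNSu} its endpoints lie in $\Sigma_-[v](y_0)\subset\mathcal{O}[v]$, where $v$ is differentiable by Lemma~\ref{lemma:visC11}. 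Hence $x_0$ must be an \emph{interior} point, and $x_0\notin\Sigma_-[v](y_0)$. Your final paragraph has the right picture---both sides are intervals and the homeomorphism $x\mapsto\partial_2 S(x,y)$ should match them---but you only establish $\{\partial_2 S(x_-,y),\partial_2 S(x_+,y)\}\subset\nabla^+u(y)$, not that these are the \emph{endpoints} of $\nabla^+u(y)$.

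The paper closes this gap via reachable gradients. Approximating $x_-$ (and likewise $x_+$) by a sequence $x_n$ with $u$ differentiable at $y_n:=\Sigma_+[u](x_n)$ and applying the differentiable case, one obtains $x_\pm\in\pi_1\circ F^{-1}(y,\nabla^*u(y))$. Then the inclusion $\nabla^*u(y)\subset\partial\nabla^+u(y)$ (Proposition~\ref{prop:PropertyOfSemiconcave}(iv)), combined with the fact that in dimension one $\partial\nabla^+u(y)$ has at most two points, forces $\{x_-,x_+\}=\pi_1\circ F^{-1}(y,\partial\nabla^+u(y))$; the monotone homeomorphism $p\mapsto\pi_1\circ F^{-1}(y,p)$ then carries the full interval $\nabla^+u(y)$ onto $[x_-,x_+]$. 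This limiting argument is the missing ingredient in your proposal.
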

\begin{proof}
We consider two cases based on the differentiability of $u$ at $y$.

  {\textbf{Case 1: }}$u$ is differentiable at $y$. By Proposition  \ref{prop:SingularityAndGraph}, $(\Sigma_+[u])^{-1}(y)=\{x\}$, a singleton. From Lemma \ref{lemma:RegularityOfSubaction} (b), $\partial_2S(x,y)=\d u(y)$. Using the definition of the discrete standard map (see Remark \ref{remark:F}), we conclude: $$x=\pi_1\circ F^{-1}(y,\d u(y)).$$

  {\textbf{Case 2: }}$u$ is non-differentiable at $y$. By Proposition \ref{prop:SingularityAndGraph}, $(\Sigma_+[u])^{-1}(y)$ is a non-degenerate closed interval $[x_-,x_+]$. From the proof of Lemma \ref{lemma:SingularityAndNSu}, we can construct an increasing sequence $\{x_n\}_{n\in\mathbb{Z}_+}$ such that $x_n\rightarrow x_-$ and $u$ is differentiable at $y_n:=\Sigma_+[u](x_n)$. By Case 1: \begin{align*}
    x_n=\pi_1\circ F^{-1}(y_n,\d u(y_n)).
  \end{align*} Taking limits, we obtain\begin{align*}
    x_-\in\pi_1\circ F^{-1}(y,\nabla^* u(y)).
  \end{align*}
  Similarly, we derive $x_+\in\pi_1\circ F^{-1}(y,\nabla^* u(y))$. Since $u=T^-[v]$ is semiconcave, by Proposition \ref{prop:PropertyOfSemiconcave} (iv): $$\nabla^*u(y)\subset \partial\nabla^+u(y).$$ Hence: \begin{align*}
    x_-,x_+\in \pi_1\circ F^{-1}(y,\partial\nabla^+u(y)).
  \end{align*}
  Since $\nabla^+u(y)$ is a closed convex subset of $\mathbb{R}$, its boundary contains at most two different points. Thus: \begin{align*}
    \{x_-,x_+\}=\pi_1\circ F^{-1}(y,\partial\nabla^+u(y)).
  \end{align*}
  Observing that  $x_-<x_+$ and using the fact that $p\mapsto\pi_1\circ F^{-1}(y,p)$ is a homeomorphism (by Hypothesis \ref{hypothesis:S2}), we conclude: \[
    [x_-,x_+]=\pi_1\circ F^{-1}(y,\nabla^+u(y)). \qedhere
 \]
\end{proof}

\begin{proposition}
	 If $u=T^-[v]$, $v\in SC(\mathbb{T})$, then the following graphs $$\operatorname{Graph}(\Sigma_+[u]),\ \operatorname{Graph}(\d T^+[u]) \ \text{ and }\  \operatorname{Graph}(\nabla^+u)$$ are topological submanifolds of $\mathbb{R}^2$, related by the commutative diagram:\begin{center}
  \begin{tikzcd}
    & \operatorname{Graph}(\Sigma_+[u]) \arrow[rdd, "\Gamma^+"] \arrow[ldd, "\Gamma^-"'] &                                 \\
    &                                                                                 &                                 \\
  {\operatorname{Graph}(\d T^+[u])} \arrow[rr, "F"] &                                                                                 & \operatorname{Graph}(\nabla^+u)
  \end{tikzcd}
\end{center}
 Here: \begin{itemize}
 	\item $\Gamma^+(x,y)=(y,\partial_2S(x,y))$,
 	\item $\Gamma^-(x,y)=(x,-\partial_1S(x,y))$,
 	\item  $F$ is discrete standard map.
 \end{itemize} Moreover, if $F$ is locally bi-Lipschitz, these three graphs are Lipschitz submanifolds  of $\mathbb{R}^2$.
\end{proposition}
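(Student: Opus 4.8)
The plan is to realize all three sets as bi-continuous (resp. bi-Lipschitz) images of a single line $\mathbb{R}$, with $\Gamma^-$, $\Gamma^+$ and $F$ playing the role of the transition maps, and to reduce the commutativity of the diagram to an unwinding of the defining relations of $F$. I would start with the two sets that are automatically graphs of continuous functions. Since $\Sigma_+[u]$ is a continuous map $\mathbb{R}\to\mathbb{R}$ (Proposition~\ref{prop:LiftOfCircleMap}), the parametrization $x\mapsto(x,\Sigma_+[u](x))$ is a topological embedding of $\mathbb{R}$ with image $\operatorname{Graph}(\Sigma_+[u])$ and inverse $\pi_1$, so $\operatorname{Graph}(\Sigma_+[u])$ is a topological submanifold of $\mathbb{R}^2$. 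Likewise $T^+[u]$ is $C^1$ (Proposition~\ref{prop:Regularization}), so $\d T^+[u]:\mathbb{R}\to\mathbb{R}$ is a genuine continuous function and $\operatorname{Graph}(\d T^+[u])$ is the graph of that function, hence again a topological submanifold.

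Next I would check that $\Gamma^-$ restricts to a homeomorphism of $\operatorname{Graph}(\Sigma_+[u])$ onto $\operatorname{Graph}(\d T^+[u])$. For $(x,y)$ with $y=\Sigma_+[u](x)$, Lemma~\ref{lemma:RegularityOfSubaction}(b) together with the $C^1$ regularity of $T^+[u]$ (equivalently, equation~\eqref{eq:-partialS=dT+}) gives $-\partial_1 S(x,y)=\d T^+[u](x)$, so $\Gamma^-(x,\Sigma_+[u](x))=(x,\d T^+[u](x))$. Thus $\Gamma^-$ is exactly the composition of the two parametrizations above, hence a homeomorphism onto $\operatorname{Graph}(\d T^+[u])$. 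I would then verify $\Gamma^+=F\circ\Gamma^-$ on $\operatorname{Graph}(\Sigma_+[u])$: with $q:=-\partial_1 S(x,y)$, the defining relations of $F$ (Remark~\ref{remark:F}) together with the injectivity of $y'\mapsto\partial_1 S(x,y')$ (Hypothesis~\ref{hypothesis:S2}) force $F(x,q)=(y,\partial_2 S(x,y))=\Gamma^+(x,y)$. This is the commutativity of the diagram.

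Since $F$ is a homeomorphism of $\mathbb{R}^2$ (Remark~\ref{remark:F}) and $\Gamma^-$ is a homeomorphism onto $\operatorname{Graph}(\d T^+[u])$, the map $\Gamma^+=F\circ\Gamma^-$ is a homeomorphism of $\operatorname{Graph}(\Sigma_+[u])$ onto its image. It remains to identify that image with $\operatorname{Graph}(\nabla^+u)$. The inclusion $\subseteq$ is Lemma~\ref{lemma:RegularityOfSubaction}(b), which says $\partial_2 S(x,\Sigma_+[u](x))\in\nabla^+u(\Sigma_+[u](x))$. For $\supseteq$ I would use the explicit formula of Lemma~\ref{lemma:EquivalentDefinitionOfSingularities}: given $p\in\nabla^+u(y)$, put $x:=\pi_1\circ F^{-1}(y,p)$; then $\partial_2 S(x,y)=p$ and $x\in(\Sigma_+[u])^{-1}(y)$, i.e.\ $\Sigma_+[u](x)=y$, so $(y,p)=\Gamma^+(x,\Sigma_+[u](x))$. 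Hence $\operatorname{Graph}(\nabla^+u)=\Gamma^+(\operatorname{Graph}(\Sigma_+[u]))$ is homeomorphic to $\mathbb{R}$ and is a topological submanifold of $\mathbb{R}^2$; the vertical segments $\{y\}\times\nabla^+u(y)$ over singular points $y$ are no obstruction, since globally the set is still a topologically embedded line.

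For the Lipschitz refinement, assume $F$ is locally bi-Lipschitz. Then $\Sigma_+[u]$ is Lipschitz (Proposition~\ref{proposition:SigmaLipschitz}) and $\d T^+[u]$ is Lipschitz (Proposition~\ref{prop:Regularization}), so both parametrizations $x\mapsto(x,\Sigma_+[u](x))$ and $x\mapsto(x,\d T^+[u](x))$ are bi-Lipschitz; hence $\operatorname{Graph}(\Sigma_+[u])$ and $\operatorname{Graph}(\d T^+[u])$ are Lipschitz submanifolds and $\Gamma^-$ is bi-Lipschitz between them. Since $T^+[u]$ is $C^{1,1}$ and $1$-periodic, $\operatorname{Graph}(\d T^+[u])$ lies in a strip $\mathbb{R}\times[-M,M]$ invariant under $(x,p)\mapsto(x+1,p)$; as $F$ commutes with this integer translation (diagonal-periodicity of $S$) and is locally bi-Lipschitz, it is globally bi-Lipschitz on that strip, so $\Gamma^+=F\circ\Gamma^-$ is bi-Lipschitz and $\operatorname{Graph}(\nabla^+u)$ is a Lipschitz submanifold. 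I expect the main obstacle to be exactly the identification $\Gamma^+(\operatorname{Graph}(\Sigma_+[u]))=\operatorname{Graph}(\nabla^+u)$ — in particular controlling the behaviour over $\operatorname{Sing}(u)$ via Lemma~\ref{lemma:EquivalentDefinitionOfSingularities} — and confirming that this ``pseudo-graph with vertical segments'' is genuinely a (Lipschitz) submanifold rather than merely a continuous image of a line.
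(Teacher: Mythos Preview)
Your proposal is correct and follows essentially the same approach as the paper: both rely on Proposition~\ref{prop:Regularization} for the $C^1$ regularity of $T^+[u]$, on equation~\eqref{eq:-partialS=dT+} to identify $\Gamma^-$ with the graph parametrization of $\d T^+[u]$, on Lemma~\ref{lemma:EquivalentDefinitionOfSingularities} for the surjectivity onto $\operatorname{Graph}(\nabla^+u)$, and on Proposition~\ref{proposition:SigmaLipschitz} plus periodicity for the Lipschitz upgrade. The only cosmetic differences are that the paper phrases Step~1 as ``bijectivity of the diagram'' (checking surjectivity of $\Gamma^-$ and of $F$ separately) and in Step~2 transports the Lipschitz structure from $\operatorname{Graph}(\Sigma_+[u])$ to the other two graphs via the local bi-Lipschitz maps $\Gamma^\pm$ restricted to compact sets, whereas you parametrize $\operatorname{Graph}(\d T^+[u])$ directly and argue via an invariant strip; both are fine.
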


\begin{proof}
{\textbf{Step 1: Verifying Bijectivity of the Diagram.}}

	Both $\Gamma^\pm$ and $F$ are homeomorphisms on $\mathbb{R}^2$, so their restrictions  to the relevant graphs are injective.   We now prove surjectivity. 
	
	\textbf{(i) Surjectivity of $\Gamma^-$:}
	From  Proposition \ref{prop:Regularization}, $T^+[u]$ is differentiable everywhere since $u = T^-[v]$ with $v \in SC(\mathbb{T})$. By Lemma \ref{lemma:RegularityOfSubaction} (b), we have: \begin{equation}\label{eq:-p1S=dT+u}
	-\partial_1S(x,\Sigma_+[u](x))=\d T^+[u](x), \quad \forall x\in\mathbb{R}.
\end{equation}
	Thus: $\Gamma^{-}(x,\Sigma_+[u](x))=(x, \d T^+[u](x))$, implying
	 $\Gamma^-$ is surjective.
	 
	 \textbf{(ii) Surjectivity of $F$:}
	 Let $(y,p)\in\operatorname{Graph}(\nabla^+u)$. From Lemma \ref{lemma:EquivalentDefinitionOfSingularities}, let \begin{equation}\label{eq:pi1Finverse}
	 	x:=\pi_1\circ F^{-1}(y,p).
	 \end{equation} 
	 Then $\Sigma_+[u](x)=y$. Combining the definition of $F$ and the equation \eqref{eq:-p1S=dT+u}, we obtain: \begin{equation}\label{eq:pi2Finverse}
	 	\pi_2\circ F^{-1}(y,p)=-\partial_1S(x,y)=\d T^+[u](x).
	 \end{equation}
	 Hence $$F^{-1}(y,p)=(x,\d T^+[u](x)),$$	  proving surjectivity.
	 
	 {\textbf{Step 2:  Lipschitz Regularity of the Graphs.}}
	 	
	If $F$ is locally bi-Lipschitz, then $\Gamma^\pm$ are locally bi-Lipschitz as well (Remark \ref{remark:F}).  The graph of a Lipschitz  continuous  function $f:\mathbb{R}^n\rightarrow\mathbb{R}$ is a Lipschitz submanifold of $\mathbb{R}^{n+1}$. From Proposition \ref{proposition:SigmaLipschitz}, $\operatorname{Graph}(\Sigma_+[u])$ is a $1$-dimensional Lipschitz submanifold of $\mathbb{R}^2$.
	  	Since the notion of Lipschitz submanifolds is invariant under bi-Lipschitz homeomorphism, we conclude that  $\operatorname{Graph}(\d T^+[u])\cap K$ and $\operatorname{Graph}(\nabla^+u)\cap K$ are Lipschitz submanifolds for any compact set $K\subset \mathbb{R}^2$. By periodicity of $u$ and $T^+[u]$, \(\operatorname{Graph}(\d T^+[u])\) and \(\operatorname{Graph}(\nabla^+ u)\) are Lipschitz submanifolds globally.
\end{proof}

\subsection{Singular dynamics on $\operatorname{Graph}(\nabla^+u)$}

This section analyzes the singular dynamics on \(\operatorname{Graph}( \nabla^+u)\), culminating in the proofs of Theorems \ref{thm:SingularDynamics} and \ref{thm:main3}.

Let $w=T^{-2}[v]$, $u=T^-[v]$, and $v\in SC(\mathbb{T})$.
Define the forward map: \begin{align*}
  \Sigma_+[u]\times \Sigma_+[w]:\operatorname{Graph}(\Sigma_+[u])\rightarrow\operatorname{Graph}(\Sigma_+[w]) 
\end{align*}
given by $ (x,\Sigma_+[u](x))\mapsto\left(\Sigma_+[u](x),\Sigma_+[w](\Sigma_+[u](x))\right).$ This induces the maps \[
  \Omega[u]: \operatorname{Graph}(\d T^+[u])\rightarrow\operatorname{Graph}(\d T^+[w])\]
  and \[\Lambda[u]: \operatorname{Graph}(\nabla^+u)\rightarrow\operatorname{Graph}(\nabla^+w)
\]
illustrated by the commutative diagram:
\begin{center}
  \begin{tikzcd}
    {\operatorname{Graph}(\d T^+[u])} \arrow[dd, "F"'] \arrow[rrrr, "{\Omega [u] }"] &    &  &     & {\operatorname{Graph}(\d T^+[w])} \arrow[dd, "F"] \\
& \operatorname{Graph}(\Sigma_+[u]) \arrow[ld, "\Gamma^+"] \arrow[lu, "\Gamma^-"'] \arrow[rr, "{\Sigma_+[u]\times \Sigma_+[w]}"] & &\operatorname{Graph}(\Sigma_+[w]) \arrow[ru, "\Gamma^-"] \arrow[rd, "\Gamma^+"'] &                                                   \\
\operatorname{Graph}(\nabla^+u) \arrow[rrrr, "{\Lambda [u]}"]  & &  &     & \operatorname{Graph}(\nabla^+w)                  
    \end{tikzcd}
\end{center}
\medskip

\begin{lemma}\label{lemma:Lambda[u]}
	Let $w=T^{-2}[v]$, $u=T^-[v]$, and $v\in SC(\mathbb{T})$. Then the map $\Lambda[u]$
	satisfies the following: \begin{enumerate}
    \item [(i)]  $\Lambda[u](x+1,p)=\Lambda[u](x,p)+(1,0)$.
    \item [(ii)] $\Lambda[u]$ is continuous. If $F$ is locally bi-Lipschitz, then $\Lambda[u]$ is Lipschitz continuous.
    \item [(iii)]  $\Lambda[u]$ does not reverse the orientation of the full pseudo-graph.
    \item [(iv)] The image of a vertical segment under $\Lambda[u]$ is a point that belongs to a vertical segment.
  \end{enumerate}
\end{lemma}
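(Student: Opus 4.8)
The plan is to first reduce $\Lambda[u]$ to an explicit formula and then read off each item from it. Set $w:=T^{-2}[v]=T^-[u]$. Since $v\in SC(\mathbb{T})$, the image $u=T^-[v]$ is semiconcave, hence $w\in T^-(SC(\mathbb{T}))$ as well, and Lemma~\ref{lemma:singleton} makes both $\Sigma_+[u]$ and $\Sigma_+[w]$ single-valued. The preceding proposition (the one establishing the commutative triangle relating $\operatorname{Graph}(\Sigma_+[u])$, $\operatorname{Graph}(\d T^+[u])$ and $\operatorname{Graph}(\nabla^+u)$) shows that $\Gamma^+$ restricts to a homeomorphism $\operatorname{Graph}(\Sigma_+[u])\to\operatorname{Graph}(\nabla^+u)$, and likewise with $w$; thus $\Lambda[u]=\Gamma^+\circ(\Sigma_+[u]\times\Sigma_+[w])\circ(\Gamma^+)^{-1}$ is well-defined. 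To compute it I would take $(x,p)\in\operatorname{Graph}(\nabla^+u)$ and use Lemma~\ref{lemma:EquivalentDefinitionOfSingularities}: the point $a:=\pi_1\circ F^{-1}(x,p)$ satisfies $\Sigma_+[u](a)=x$ and $\partial_2 S(a,x)=p$, so $(\Gamma^+)^{-1}(x,p)=(a,x)$; then $(\Sigma_+[u]\times\Sigma_+[w])$ sends $(a,\Sigma_+[u](a))$ to $(x,\Sigma_+[w](x))$, and applying $\Gamma^+$ gives
\[
\Lambda[u](x,p)=\bigl(\Sigma_+[w](x),\ \partial_2 S\bigl(x,\Sigma_+[w](x)\bigr)\bigr),
\]
which, crucially, does not depend on $p$. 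I expect this derivation — tracking the fibre $(\Sigma_+[u])^{-1}(x)$ through the diagram — to be the only delicate bookkeeping; once the formula is in hand, all four items are immediate.

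For (i): differentiating the diagonal periodicity $S(x+1,y+1)=S(x,y)$ gives $\partial_2 S(x+1,y+1)=\partial_2 S(x,y)$, while $\Sigma_+[w](x+1)=\Sigma_+[w](x)+1$ by Proposition~\ref{prop:LiftOfCircleMap}; substituting $y=\Sigma_+[w](x)$ into the formula yields $\Lambda[u](x+1,p)=\Lambda[u](x,p)+(1,0)$. For (ii): $\Sigma_+[w]$ is continuous and $\partial_2 S$ is continuous, so the formula makes $\Lambda[u]$ continuous; if $F$ is locally bi-Lipschitz then $\Sigma_+[w]$ is Lipschitz by Proposition~\ref{proposition:SigmaLipschitz} and $\Gamma^+$ is locally bi-Lipschitz by Remark~\ref{remark:F}, whence $(x,y)\mapsto\partial_2 S(x,y)$ is locally Lipschitz, so $\Lambda[u]$ is a composition of locally Lipschitz maps and hence locally Lipschitz, and the periodicity from (i) upgrades this to a global Lipschitz bound.

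For (iii): conjugating by the homeomorphisms $\Gamma^+$ identifies $\Lambda[u]$ with the middle map $\Sigma_+[u]\times\Sigma_+[w]$. Parametrizing $\operatorname{Graph}(\Sigma_+[u])$ and $\operatorname{Graph}(\Sigma_+[w])$ by their first coordinate — equivalently, using the orientation that $\Gamma^+$ transports onto the two full pseudo-graphs — this middle map sends the point with parameter $a$ to the point with parameter $\Sigma_+[u](a)$, which is a non-decreasing function of $a$ by Lemma~\ref{lemma:SigmaNondecreasing}; hence $\Lambda[u]$ does not reverse orientation. For (iv): a vertical segment of $\operatorname{Graph}(\nabla^+u)$ is $\{x_0\}\times\nabla^+u(x_0)$ with $x_0\in\operatorname{Sing}(u)$, and since $\Lambda[u](x_0,p)$ is independent of $p$, its image is the single point $\bigl(\Sigma_+[w](x_0),\partial_2 S(x_0,\Sigma_+[w](x_0))\bigr)$; applying Proposition~\ref{prop:SingularityPropagate} to $w=T^-[u]$ with $u\in SC(\mathbb{T})$ gives $\Sigma_+[w](x_0)\in\operatorname{Sing}(w)$, so $\{\Sigma_+[w](x_0)\}\times\nabla^+w(\Sigma_+[w](x_0))$ is a vertical segment of $\operatorname{Graph}(\nabla^+w)$, and the image point lies on it because $\partial_2 S(x_0,\Sigma_+[w](x_0))\in\nabla^+w(\Sigma_+[w](x_0))$ by Lemma~\ref{lemma:RegularityOfSubaction}(b). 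As indicated, the single real obstacle is the careful derivation of the formula for $\Lambda[u]$; everything downstream of it is routine.
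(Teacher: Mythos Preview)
Your proposal is correct and follows essentially the same route as the paper: both derive the explicit formula $\Lambda[u](x,p)=(\Sigma_+[w](x),\partial_2S(x,\Sigma_+[w](x)))$ by unwinding $\Gamma^+\circ(\Sigma_+[u]\times\Sigma_+[w])\circ(\Gamma^+)^{-1}$ via Lemma~\ref{lemma:EquivalentDefinitionOfSingularities}, and then read off (i)--(iv) from Proposition~\ref{prop:LiftOfCircleMap}, Lemma~\ref{lemma:SigmaNondecreasing}, Proposition~\ref{proposition:SigmaLipschitz}, and Proposition~\ref{prop:SingularityPropagate} respectively. Your treatment of (iv) is slightly more explicit than the paper's in verifying that the image point lies on the vertical segment (invoking Lemma~\ref{lemma:RegularityOfSubaction}(b)), but this is already implicit in the fact that $\Gamma^+$ maps $\operatorname{Graph}(\Sigma_+[w])$ onto $\operatorname{Graph}(\nabla^+w)$.
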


\begin{proof}
For any $(x,p)\in \operatorname{Graph}(\nabla^+u)$, we compute as follows:\begin{align*}
	\Lambda[u](x,p)&=\Gamma^+\circ (\Sigma_+[u]\times \Sigma_+[w])\circ (\Gamma^+)^{-1}(x,p)\\ 
	&=\Gamma^+\circ (\Sigma_+[u]\times \Sigma_+[w])(z,x)\quad \text{ where }(z,x)\text{ satisfies } p=\partial_2S(z,x)\\ 
	&=\Gamma^+(\Sigma_+[u](z),\Sigma_+[w](x))\\ 
	(\text{by Lemma }\ref{lemma:EquivalentDefinitionOfSingularities} )\quad &=\Gamma^+(x,\Sigma_+[w](x))\\ 
	&=(\Sigma_+[w](x),\partial_2S(x,\Sigma_+[w](x))).
\end{align*}
Since  $u=T^-[v]$ is semiconcave,  the properties of $\Lambda[u]$ follow: \begin{itemize}
	\item Item (i) and the first half of the item (ii)  follow directly from Proposition \ref{prop:LiftOfCircleMap}.\\ 
	\item Item (iii) follows from Lemma \ref{lemma:SigmaNondecreasing}.\\ 
	\item  The second half of the item (ii), concerning the Lipschitz continuity of $\Lambda[u]$, follows from Proposition \ref{proposition:SigmaLipschitz}.\\ 
	\item Item (iv) is a consequence of Proposition \ref{prop:SingularityPropagate}, as it describes how singularities propagate under the map $\Sigma_+[u]$.\qedhere
\end{itemize} \end{proof}

\begin{proposition}
	Let $u$ be a discrete weak K.A.M. solution. Then the map $\Lambda[u]:\operatorname{Graph}(\nabla^+u)\righttoleftarrow$	satisfies the following properties: \begin{enumerate}
    \item [(i)] $\Lambda[u](x+1,p)=\Lambda[u](x,p)+(1,0)$.
    \item [(ii)] $\Lambda[u]$ is continuous. If $F$ is locally bi-Lipschitz, then $\Lambda[u]$ is Lipschitz continuous.
    \item [(iii)] $\Lambda[u]$ preserves the orientation of the full pseudo-graph.
    \item [(iv)] The image of a vertical segment under $\Lambda[u]$ is a point that belongs to a vertical segment.
    \item [(v)] $\Lambda [u]$ admits a rotation number for positive iterations: 
    \begin{align*}
      \rho(\Lambda [u]):=\lim_{n\to+\infty}\frac{1}{n}\left( \pi_1\circ\Lambda ^n[u](x,p)-x \right)
    \end{align*}
  exists for all $(x,p)\in\operatorname{Graph}(\nabla^+u)$ and is independent of $(x,p)$. In fact, $\rho(\Lambda[u])$ is the rotation number of the Aubry set.
  \end{enumerate}
\end{proposition}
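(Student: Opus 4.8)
The plan is to deduce the statement from the two-step construction already analyzed in Lemma~\ref{lemma:Lambda[u]}, together with the rotation-number results for $\Sigma_+$ obtained in Section~\ref{sec:Theorem1}. The first thing to observe is that when $u$ is a discrete weak K.A.M. solution one has $T^-[u]=u+\overline{S}$, so by linearity of $T^-$ one gets $u=T^-[u-\overline{S}]$; since $T^-$ always produces semiconcave functions with linear modulus, $v:=u-\overline{S}$ lies in $SC(\mathbb{T})$ and $u=T^-[v]$. With this choice of $v$ the auxiliary function appearing in Lemma~\ref{lemma:Lambda[u]} is $w=T^{-2}[v]=T^-[u]=u+\overline{S}$, hence $\nabla^+w=\nabla^+u$ and $\operatorname{Graph}(\nabla^+w)=\operatorname{Graph}(\nabla^+u)$; in particular $\Lambda[u]$ really is a self-map of $\operatorname{Graph}(\nabla^+u)$, and items (i)--(iv) are then exactly the conclusions of Lemma~\ref{lemma:Lambda[u]} (with (iii) in the orientation-preserving form, which for the non-decreasing degree-one circle map $\Sigma_+[u]$ is the same statement as "does not reverse the orientation").

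Next I would record the explicit form of $\Lambda[u]$. Since the arg-max defining $\Sigma_+$ is insensitive to additive constants, $\Sigma_+[w]=\Sigma_+[u+\overline{S}]=\Sigma_+[u]$, and substituting this into the computation carried out in the proof of Lemma~\ref{lemma:Lambda[u]} gives
\[
\Lambda[u](x,p)=\bigl(\Sigma_+[u](x),\ \partial_2S(x,\Sigma_+[u](x))\bigr),\qquad (x,p)\in\operatorname{Graph}(\nabla^+u),
\]
the right-hand side lying in $\operatorname{Graph}(\nabla^+u)$ by item (b) of Lemma~\ref{lemma:RegularityOfSubaction}. In other words $\pi_1\circ\Lambda[u]=\Sigma_+[u]\circ\pi_1$, so a trivial induction yields $\pi_1\circ\Lambda^n[u](x,p)=\Sigma_+^n[u](x)$ for every $n\in\mathbb{Z}_+$.

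Item (v) then follows at once: from the displayed identity,
\[
\rho(\Lambda[u])=\lim_{n\to+\infty}\frac{1}{n}\bigl(\Sigma_+^n[u](x)-x\bigr)=\rho(\Sigma_+[u]),
\]
and this limit exists and is independent of $x$ by Proposition~\ref{prop:LiftOfCircleMap} combined with Lemma~\ref{lemma:RotationNumber}; since $u=T^-[v]$ with $v\in SC(\mathbb{T})$ and $u$, being a negative discrete weak K.A.M. solution, satisfies $u\le T^-[u]-\overline{S}$ and is therefore a sub-action, Proposition~\ref{prop:RotationNumber} identifies $\rho(\Sigma_+[u])$ with the rotation number of the Aubry set. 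This completes the argument.

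I do not expect any genuine obstacle here: the analytic content --- single-valuedness, Lipschitz continuity, propagation of singularities, and existence of the rotation number --- was all established for $\Sigma_+[u]$ in Section~\ref{sec:Theorem1} and then transported to $\Lambda[u]$ through $\Gamma^{\pm}$ and $F$ in Lemma~\ref{lemma:Lambda[u]}. The only point that needs a careful eye is the bookkeeping in the first paragraph: one must check that the target graph $\operatorname{Graph}(\nabla^+w)$ of the general construction collapses onto $\operatorname{Graph}(\nabla^+u)$ precisely because a weak K.A.M. solution is fixed by $T^-$ up to an additive constant, so that iterating $\Lambda[u]$ --- and hence the rotation number in (v) --- is meaningful.
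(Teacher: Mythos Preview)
Your proposal is correct and follows exactly the route the paper intends: the paper's own proof simply reads ``This proposition follows from Proposition~\ref{prop:RotationNumber} and Lemma~\ref{lemma:Lambda[u]}'', and you have spelled out precisely how --- the key bookkeeping that $v:=u-\overline{S}\in SC(\mathbb{T})$ gives $u=T^-[v]$ and $w=T^{-2}[v]=u+\overline{S}$, so $\Lambda[u]$ is a self-map and $\Sigma_+[w]=\Sigma_+[u]$, whence the conjugacy $\pi_1\circ\Lambda[u]=\Sigma_+[u]\circ\pi_1$ reduces item~(v) to Proposition~\ref{prop:RotationNumber}. Nothing is missing.
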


\begin{proof}
	This proposition follows from Proposition \ref{prop:RotationNumber} and Lemma \ref{lemma:Lambda[u]}.
\end{proof}

Let $u$ be a discrete weak K.A.M. solution. Consider the dynamical system $\Lambda[u]:\operatorname{Graph}(\nabla^+u)\righttoleftarrow$. For any subset $A\subset \operatorname{Graph}(\nabla^+u)$,  the $\alpha$-limit set of $A$ is the set of accumulation points of the negative semiorbit of $\Lambda[u]$ started from $A$: $$\alpha(A):=\bigcap_{n\in\mathbb{Z}_+}\overline{\{\Lambda^i[u](x,p)\mid i\leq -n, (x,p)\in A\}}.$$ 

\begin{proposition}
	Let $u$ be a discrete weak K.A.M. solution. Then the set $\alpha(\operatorname{Graph}(\d u))$
	satisfies the following properties:   \begin{enumerate}
    \item [(i)] The set $\alpha(\operatorname{Graph}(\d u))$ is closed, non-empty and invariant under the dynamics $F$.
    \item [(ii)] the Aubry set $\mathcal{A}^*$ can be expressed as $$\mathcal{A}^*=\bigcap_u\alpha(\operatorname{Graph}(\d u)),$$ where the intersection is taken over all discrete weak K.A.M. solutions.
      \end{enumerate}
\end{proposition}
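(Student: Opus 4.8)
Throughout I fix the cohomology class $c=0$, as in the rest of Sections~\ref{sec:Theorem1}--\ref{sec:consequences}, and write $u:=u_0$, $\Sigma:=\Sigma_+[u]$, $\Lambda:=\Lambda[u]$; the general $c$ is recovered from the change of variables $S_c(x,y)=S(x,y)+c(x-y)$, which merely shifts all the pseudo-graphs by $(0,c)$, whence the final displayed formula. \textbf{Easy properties and a conjugacy.} The set $\alpha(\operatorname{Graph}(\d u))$ is closed by construction, being a countable intersection of closed sets. For non-emptiness, note that $\Lambda$ commutes with the deck translation $(x,p)\mapsto(x+1,p)$, so it descends to a continuous self-map of the compact topological circle $\operatorname{Graph}(\nabla^+u)/\mathbb{Z}$; there the family $\bigl\{\overline{\{\Lambda^i[u](x,p): i\le -n,\ (x,p)\in\operatorname{Graph}(\d u)\}}\bigr\}_{n}$ is a nested family of nonempty compacta (nonempty because $\Sigma^n$ is onto and the differentiability set of $u$ is dense), so the intersection is nonempty and lifts back. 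Finally, from the diagram of Theorem~\ref{thm:main2} together with the fact that $\pi_1$ conjugates $\Sigma_+\times\Sigma_+$ to $\Sigma$ on $\operatorname{Graph}(\Sigma)$, the homeomorphism $\psi:=\pi_1\circ(\Gamma^+)^{-1}=\pi_1\circ F^{-1}|_{\operatorname{Graph}(\nabla^+u)}:\operatorname{Graph}(\nabla^+u)\to\mathbb{R}$ conjugates $\Lambda$ to the non-decreasing degree-one circle map $\Sigma:\mathbb{R}\to\mathbb{R}$, and Lemma~\ref{lemma:EquivalentDefinitionOfSingularities} together with Proposition~\ref{prop:SingularityAndGraph} gives $\psi(\operatorname{Graph}(\d u))=\Sigma^{-1}\bigl(\mathbb{R}\setminus\operatorname{Sing}(u)\bigr)$. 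Thus everything is transported to the dynamics of $\Sigma$.

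\textbf{The inclusion $\mathcal{A}^*\subseteq\bigcap_u\alpha(\operatorname{Graph}(\d u))$.} Fix a discrete weak K.A.M. solution $u$ and $(x_0,\d u(x_0))\in\mathcal{A}^*$. By Proposition~\ref{prop:PropertyOfAubry}(iv) there is a bi-infinite $u$-calibrated sequence $\{x_n\}_{n\in\mathbb{Z}}$ through $x_0$, contained in $\mathcal{A}$. As in the proof of Proposition~\ref{prop:RotationNumber}, calibration forces $x_{n+1}=\Sigma(x_n)$, and since $u$ is differentiable on $\mathcal{A}$ (Proposition~\ref{prop:PropertyOfAubry}(iii)), Lemma~\ref{lemma:RegularityOfSubaction}(b) gives $\partial_2S(x_n,x_{n+1})=\d u(x_{n+1})$; hence $\Lambda(x_n,\d u(x_n))=(x_{n+1},\d u(x_{n+1}))$ and each $(x_n,\d u(x_n))$ lies in $\operatorname{Graph}(\d u)$. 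Consequently, for every $n$ one has $(x_0,\d u(x_0))\in(\Lambda^n)^{-1}\bigl(\{(x_n,\d u(x_n))\}\bigr)\subseteq\{\Lambda^i[u](x,p): i\le -n,\ (x,p)\in\operatorname{Graph}(\d u)\}$, so $(x_0,\d u(x_0))\in\alpha(\operatorname{Graph}(\d u))$. As $u$ was arbitrary, $\mathcal{A}^*\subseteq\bigcap_u\alpha(\operatorname{Graph}(\d u))$.

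\textbf{The reverse inclusion.} I aim to show $\alpha(\operatorname{Graph}(\d u))\subseteq\mathcal{O}^*[u]$ for each $u$; combined with $\mathcal{A}^*=\bigcap_u\mathcal{O}^*[u]$ --- which follows from $\mathcal{A}=\bigcap_u\mathcal{O}^\infty[u]$ (Proposition~\ref{prop:defofAubry}), the regularity of $u$ on $\mathcal{O}^\infty[u]$ (Lemma~\ref{lemma:visC11}), and the $u$-independence of $\d u|_{\mathcal{A}}$ (Proposition~\ref{prop:PropertyOfAubry}(iii)) --- this yields $\bigcap_u\alpha(\operatorname{Graph}(\d u))\subseteq\mathcal{A}^*$ and hence equality. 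Working through the conjugacy $\psi$: singularity propagation (Proposition~\ref{prop:SingularityPropagate}) gives $\Sigma(\operatorname{Sing}(u))\subseteq\operatorname{Sing}(u)$, so the sets $\Sigma^{-k}\bigl(\mathbb{R}\setminus\operatorname{Sing}(u)\bigr)$ decrease in $k$ and therefore $\psi(\alpha(\operatorname{Graph}(\d u)))=\bigcap_{k\ge1}\overline{\Sigma^{-k}(\mathbb{R}\setminus\operatorname{Sing}(u))}$. Next one checks that every non-degenerate plateau of every iterate $\Sigma^k$ has its constant value in $\operatorname{Sing}(u)$: follow the plateau forward to the first index at which $\Sigma$ collapses a non-degenerate interval, use Proposition~\ref{prop:SingularityAndGraph} at that step, and push the singularity forward by propagation. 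Hence $\overline{\Sigma^{-k}(\mathbb{R}\setminus\operatorname{Sing}(u))}=\mathbb{R}$ minus the open plateaus of $\Sigma^k$, and $\psi(\alpha(\operatorname{Graph}(\d u)))$ is exactly the set of points lying in no non-degenerate plateau of any $\Sigma^k$. It then remains to see that such a point $x$ satisfies $\Sigma(x)\in\mathcal{O}^\infty[u]$; since $\psi^{-1}\bigl(\Sigma^{-1}(\mathcal{O}^\infty[u])\bigr)=\mathcal{O}^*[u]$ (again by Lemma~\ref{lemma:RegularityOfSubaction}(b), Lemma~\ref{lemma:visC11}, and the $F$-invariance of $\mathcal{O}^*[u]$ from Proposition~\ref{prop:invariant}), this gives the claimed inclusion.

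\textbf{$F$-invariance, and the obstacle.} For $x\in\mathcal{O}^\infty[u]$ Lemma~\ref{lemma:visC11} gives $\d T^+[u](x)=\d u(x)$, so by the definition of $F$ (Remark~\ref{remark:F}) and Lemma~\ref{lemma:RegularityOfSubaction}(b), $F(x,\d u(x))=(\Sigma(x),\partial_2S(x,\Sigma(x)))=\Lambda(x,\d u(x))$; thus $F$ and $\Lambda$ coincide on $\mathcal{O}^*[u]$. A routine argument using only continuity of $\Lambda$ and compactness of the quotient shows that any $\alpha$-limit set is $\Lambda$-invariant, $\Lambda(\alpha(\operatorname{Graph}(\d u)))=\alpha(\operatorname{Graph}(\d u))$. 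Since $\alpha(\operatorname{Graph}(\d u))\subseteq\mathcal{O}^*[u]$ by the previous paragraph and $F=\Lambda$ there, $F(\alpha(\operatorname{Graph}(\d u)))=\alpha(\operatorname{Graph}(\d u))$, proving (i) for a single $u$; intersecting preserves closedness, non-emptiness (from Paragraph 2) and $F$-invariance. I expect the genuine difficulty to be the last step of Paragraph 3 --- showing a point missing every plateau of every $\Sigma^k$ is carried by $\Sigma$ into $\mathcal{O}^\infty[u]$ --- because for a \emph{negative} weak K.A.M. solution the forward $\Sigma$-orbit of a generic point need not be calibrated; the plan there is a limiting argument that, using upper semicontinuity of the maximizers defining $T^{+}$, extracts from nearby points whose forward $\Sigma$-orbit stays in the differentiability set forward $u$-calibrated configurations of arbitrary length based at $\Sigma(x)$.
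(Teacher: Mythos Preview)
Your route through the conjugacy $\psi$ and the plateau analysis of $\Sigma$ is correct in spirit but considerably more roundabout than the paper's argument, and the step you flag as the ``genuine difficulty'' is in fact immediate with tools already on the table. The paper never characterises $\psi(\alpha)$ via plateaus; instead it proves the \emph{equality}
\[
\alpha(\operatorname{Graph}(\d u))=\mathcal{O}^*[u]=\{(x,\d u(x)):x\in\mathcal{O}^\infty[u]\}
\]
by sandwiching the pre-closure sets directly: one shows
\[
\{(x,\d u(x)):x\in\mathcal{O}^{n+1}[u]\}\ \subseteq\ \bigcup_{i\le -n}\Lambda^{i}(\operatorname{Graph}(\d u))\ \subseteq\ \{(x,\d u(x)):x\in\mathcal{O}^{n}[u]\},
\]
then intersects over $n$ and uses the closedness of $\mathcal{O}^n[u]$. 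Both inclusions rest on the one observation you have available but do not exploit: the \emph{contrapositive} of Proposition~\ref{prop:SingularityPropagate} says differentiability of $u$ is inherited \emph{backwards} along $\Sigma$, so if $\Sigma^{n}(x)$ is a differentiability point then so are $x,\Sigma(x),\dots,\Sigma^{n-1}(x)$; at each such step Proposition~\ref{prop:SingularityAndGraph} and Lemma~\ref{lemma:SingularityAndNSu} give $\Sigma^{j}(x)\in\Sigma_-[u](\Sigma^{j+1}(x))$, whence $x\in\mathcal{O}^n[u]$. No upper-semicontinuity/extraction argument is needed.

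This identification also streamlines the rest of your proof: $F$-invariance of $\alpha(\operatorname{Graph}(\d u))$ is then just Proposition~\ref{prop:invariant}, non-emptiness is the non-emptiness of $\mathcal{O}^\infty[u]$, and (ii) is exactly $\mathcal{A}^*=\bigcap_u\mathcal{O}^*[u]$ from Proposition~\ref{prop:defofAubry}. Your separate arguments for $\mathcal{A}^*\subseteq\bigcap_u\alpha$, for $\Lambda$-invariance of $\alpha$, and for $F=\Lambda$ on $\mathcal{O}^*[u]$ are all correct, but they become redundant once the equality $\alpha=\mathcal{O}^*[u]$ is in hand. If you wish to keep your plateau description, the missing step closes easily: for $z\in\overline{\Sigma^{-(n+1)}(\mathbb{R}\setminus\operatorname{Sing}(u))}$ pick $z_j\to z$ with $\Sigma^{n+1}(z_j)\notin\operatorname{Sing}(u)$; the backward-differentiability argument above gives $\Sigma(z_j)\in\mathcal{O}^n[u]$, and closedness of $\mathcal{O}^n[u]$ yields $\Sigma(z)\in\mathcal{O}^n[u]$.
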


\begin{proof}
	Since $u$ is a discrete weak K.A.M. solution, $T^-[u]=u+\bar{S}$. 
	
	{\textbf{Step 1.} Inclusion of $\{(x,\d u(x))\mid x\in\mathcal{O}^{n+1}[u]\}\subset\{\Lambda^i[u](x,p)\mid i\leq -n, (x,p)\in \operatorname{Graph}(\d u)\}:$}
	
	For any $x\in\mathcal{O}^{n+1}[u]$, by Remark \ref{remark:coercive} (b), $x$ lies in $(\Sigma_-[u])^n(\mathcal{O}[u])$. Thus, there exists a backward chain $y\in\mathcal{O}[u]$, $y_{-n+1},\dots, y_{-1}$ such that:  $$x\in\Sigma_-[u](y_{-n+1}),\quad y_{-n+1}\in\Sigma_-[u](y_{-n+2}), \quad\cdots \quad y_{-1}\in\Sigma_-[u](y).$$
	Using Lemma \ref{lemma:Sigma-andSigma+}, this chain can be reversed under $\Sigma_+[u]$: $$y_{-n+1}\in\Sigma_+[u](x),\quad y_{-n+2}\in\Sigma_+[u](y_{-n+1}), \quad\cdots \quad y\in\Sigma_+[u](y_{-1}),$$
	 leading to: $$(x,\d u (x))\in\Lambda^{-n}[u](y,\d u(y)).$$
	 
	{\textbf{Step 2.} Inclusion of $\{\Lambda^i[u](x,p)\mid i\leq -n, (x,p)\in \operatorname{Graph}(\d u)\} \subset \{(x,\d u(x))\mid x\in\mathcal{O}^{n}[u]\}:$}
	
	By Proposition \ref{prop:SingularityPropagate}, $u$ remains differentiable along the backward iterates of $\lambda[u]$.
	 If $u$ is differentiable at $y$,  $u$ is differentiable on $$\bigcup_{i\leq 0}\Lambda^i[u](y,\d u(y)).$$ So if $$(x,\d u(x))\in\bigcup_{i\leq-n}\Lambda^{i}[u](y,\d u(y)),$$ the corresponding chain of points $y_{-n+1},\dots, y_{-1}$ satisfy $$y_{-n+1}\in\Sigma_+[u](x),\quad y_{-n+2}\in\Sigma_+[u](y_{-n+1}), \quad\cdots \quad y\in\Sigma_+[u](y_{-1}),$$
	and $u$ is differentiable at $y_{-n+1},\dots, y_{-1}$. By Lemma \ref{lemma:SingularityAndNSu} and Proposition \ref{prop:SingularityAndGraph}, we have $$x\in\Sigma_-[u](y_{-n+1}),\quad y_{-n+1}\in\Sigma_-[u](y_{-n+2}), \quad\cdots \quad y_{-1}\in\Sigma_-[u](y).$$ This implies $x\in\mathcal{O}^n[u]$.

{\textbf{Step 3.} Equivalence of \(\alpha(\operatorname{Graph}(\d u))\) and \(\{(x, \d u(x)) \mid x \in \mathcal{O}^\infty[u]\}\):}

	Using the inclusions from Steps 1 and 2, we deduce: \begin{align*}
		\bigcap_{n\in\mathbb{Z}_+}\{(x,\d u(x))\mid x\in\mathcal{O}^{n}[u]\}=\bigcap_{n\in\mathbb{Z}_+}\overline{\{\Lambda^i[u](x,p)\mid i\leq -n, (x,p)\in \operatorname{Graph}(\d u)\}}.	\end{align*}	
	Thus, $$\alpha(\operatorname{Graph}(\d u))=\{(x,\d u(x))\mid x\in\mathcal{O}^{\infty}[u]\},$$
	showing by  Proposition \ref{prop:invariant} it is  closed, non-empty and invariant. By Proposition \ref{prop:defofAubry},  \[\bigcap_{u \text{ weak K.A.M.}}\alpha(\operatorname{Graph}(\d u))=\bigcap_{u \text{ weak K.A.M.}}\left\{(x,p) \mid x\in\mathcal{O}^{\infty}[u],p=\d u(x)\right\}=\mathcal{A}^*.\]
	
This completes the proof of both parts of the proposition.
	\end{proof}

\begin{appendix}

\section{Semiconcave functions}\label{sec:appendixA}

Let us recall some fundamental definitions and properties  on semiconcave functions (see \cite{Cannarsabook, Ber07} for instance).

\begin{definition} Let $u:\mathbb{R}^d\rightarrow\mathbb{R}$ be a continuous function.
    \begin{itemize}
        \item For any $x\in \mathbb{R}^d$, the sets $$
        \begin{aligned}
        & \nabla^{-} u(x)=\left\{p \in (\mathbb{R}^d)^*: \liminf _{y \rightarrow x} \frac{u(y)-u(x)-p(y-x)}{\|y-x\|} \geq 0\right\} \\
        & \nabla^{+} u(x)=\left\{p \in (\mathbb{R}^d)^*: \limsup _{y \rightarrow x} \frac{u(y)-u(x)-p(y-x)}{\|y-x\|} \leq 0\right\}
        \end{aligned}
        $$
        are called, respectively, the \emph{superdifferential} and \emph{subdifferential} of $u$ at $x$. From the definition it follows that, for any $x\in\mathbb{R}^d$, $-\nabla^-u(x)=\nabla^+(-u)(x).$
        \item Let $\Omega\subset \mathbb{R}^d$ be a  convex open set. We say that $u$ is \emph{semiconcave} (with a linear modulus) on $\Omega$ if there exists a constant $C\geq 0$ such that \begin{align*}
            \lambda u(x)+(1-\lambda) u(y)-u(\lambda x+(1-\lambda) y) \leqslant \frac{C}{2} \lambda(1-\lambda)\|x-y\|^2
                \end{align*}
                for any $ x, y \in\Omega$ and $\lambda\in[0,1]$. Then $C$ is called the \emph{semiconcavity constant} of $u$. A function $v$ is called \emph{semiconvex} on $\Omega$ if $-v$ is semiconcave. We say that $u$ is \emph{locally  semiconcave} (with a linear modulus) if for each $x\in\mathbb{R}^d$, there exists a convex open neighborhood $U_x$ such that $u$ is semiconcave on $U_x$.
        \item For any $x\in\mathbb{R}^d$, a covector $p\in(\mathbb{R}^d)^*$ is called a reachable differential of $u$ at $x$ if there exists a sequence $\{x_n\}\subset \mathbb{R}^d\backslash\{x\}$ such that  $u$ is differentiable at $x_k$ for each $k\in\mathbb{Z}_+$, and \begin{align*}
          \lim _{k \rightarrow +\infty} x_k=x, \quad \lim _{k \rightarrow +\infty} \d u\left(x_k\right)=p .
        \end{align*}
        The set of all reachable gradients of $u$ at $x$ is denoted by $\nabla^*u(x)$.
    \end{itemize} 
\end{definition}

One can then have the following well-known facts.
\begin{proposition}\label{prop:PropertyOfSemiconcave}Let $u:\mathbb{R}^d\rightarrow\mathbb{R}$ be a continuous function, $\Omega\subset \mathbb{R}^d$ be a  convex open set and $x, x_0\in\Omega$.
    \begin{enumerate}
        \item [(i)] 
        Let $u:\mathbb{R}^d\rightarrow\mathbb{R}$ be a semiconcave function on $\Omega$ with a linear modulus and some semiconcave constant $C$. Then \begin{align*}
            u(x)-u(x_0)\leq p_0(x-x_0)+\frac{C}{2}\|x-x_0\|^2
        \end{align*}
        for any $p_0\in \nabla^+u(x_0)$. Conversely, 
        if there exists  $C\geq  0$ and $p_0\in(\mathbb{R}^d)^*$ such that \begin{align*}
            u(x)-u(x_0)\leq p_0(x-x_0)+\frac{C}{2}\|x-x_0\|^2, \ \quad \forall x\in\Omega
        \end{align*}
        then $u$ is semiconcave with semiconcavity constant $C$ and  $p_0\in \nabla^+u(x_0)$.
        \item[(ii)] 
        Let $v:\mathbb{R}^d\rightarrow\mathbb{R}$ be another continuous function. If $u\leq v$ on $\Omega$ and $u(x)=v(x)$, then $\nabla^+u(x)\supset   \nabla^+v(x)$ and $\nabla^-u(x)\subset   \nabla^-v(x)$.
        \item[(iii)] 
         $u$ is differentiable at $x$ if and only if $\nabla^+u(x)$ and $\nabla^-u(x)$ are both nonempty; in
        this case, we have that
        $$\nabla^+u(x)=\nabla^-u(x)=\{\d u(x)\}.$$
        \item[(iv)] 
        Let $u:\mathbb{R}^d\rightarrow\mathbb{R}$ be a semiconcave function on $\Omega$. Then  $\nabla^*u(x)\subset \partial \nabla^+u(x)$ for all $x\in\Omega$, where $\partial$ is the boundary taken with respect to the standard topology of $\mathbb{R}^d$.
        \item[(v)] 
        Let $u:\mathbb{R}^d\rightarrow\mathbb{R}$ be a semiconcave function on a compact set $\Omega$. Then the gradient of $u$, defined almost everywhere in $\Omega$, has bounded variation. Hence the gradient of $u$ is bounded in $\Omega$.
    \end{enumerate}
\end{proposition}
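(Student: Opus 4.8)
The plan is to reduce all five items to the single observation that, on a convex open set $\Omega$, a continuous function $u$ is semiconcave with linear modulus and semiconcavity constant $C$ if and only if $g:=u-\tfrac{C}{2}\|\cdot\|^2$ is concave on $\Omega$. This equivalence follows at once from the identity $\lambda\|x\|^2+(1-\lambda)\|y\|^2-\|\lambda x+(1-\lambda)y\|^2=\lambda(1-\lambda)\|x-y\|^2$, which converts the semiconcavity inequality of the Definition into the concavity inequality for $g$ and back. Since $\tfrac{C}{2}\|\cdot\|^2$ is smooth with gradient $Cx$, the dictionary is completed by $\nabla^{\pm}u(x_0)=\nabla^{\pm}g(x_0)+Cx_0$, and $u$ is differentiable at a point precisely when $g$ is. With this in hand the five assertions become classical facts about concave functions.

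For (i) I would use that a concave function $g$ on an open convex set has nonempty superdifferential at every point, and that this superdifferential coincides with the set of $q$ satisfying $g(x)-g(x_0)\le q(x-x_0)$ for all $x\in\Omega$ (the $\limsup$ description and the global supporting-hyperplane description agree for concave $g$, by an elementary segment argument). Translating via $p_0=q+Cx_0$ and using $\tfrac{C}{2}(\|x\|^2-\|x_0\|^2)-Cx_0(x-x_0)=\tfrac{C}{2}\|x-x_0\|^2$ gives the stated inequality; conversely the same rearrangement shows the hypothesis endows $g$ with an affine supporting majorant, whence (applied at every point of $\Omega$) $g$ is concave, $u$ is semiconcave with constant $C$, and $p_0\in\nabla^+u(x_0)$. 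Item (ii) needs no reduction: if $p\in\nabla^+v(x)$ and $u\le v$ with $u(x)=v(x)$, then for $y\ne x$ the quotient $\bigl(u(y)-u(x)-p(y-x)\bigr)/\|y-x\|$ is dominated by the one for $v$, so the $\limsup$ bound transfers; the subdifferential inclusion is the mirror statement after reversing the inequalities.

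For (iii), the forward implication is built into the definitions. For the converse, pick $p\in\nabla^+u(x)$ and $q\in\nabla^-u(x)$; subtracting the two difference quotients along rays $y=x+tv$, $t\downarrow 0$, forces $(p-q)\cdot v\le 0$ for every direction $v$, hence $p=q$; then $\bigl(u(y)-u(x)-p(y-x)\bigr)/\|y-x\|$ has $\limsup\le 0\le\liminf$, so it tends to $0$, $u$ is differentiable at $x$ with $\d u(x)=p$, and the same reasoning forces every element of $\nabla^+u(x)$ or $\nabla^-u(x)$ to equal $\d u(x)$. For (iv): given $p=\lim_k\d u(x_k)\in\nabla^*u(x)$, apply (i) at each $x_k$ to get $u(y)-u(x_k)\le \d u(x_k)(y-x_k)+\tfrac{C}{2}\|y-x_k\|^2$ for all $y$, and pass to the limit; the converse half of (i) then gives $p\in\nabla^+u(x)$. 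To place $p$ on the boundary, pass to a subsequence so that $v:=\lim_k(x_k-x)/\|x_k-x\|$ exists as a unit vector; adding the semiconcavity inequality at $x_k$ evaluated at $y=x$ to the one at $x$ for an arbitrary $q\in\nabla^+u(x)$ evaluated at $y=x_k$, dividing by $\|x_k-x\|$ and letting $k\to\infty$, yields $(p-q)\cdot v\le 0$ for every $q\in\nabla^+u(x)$. Thus $p$ minimizes the nonconstant linear functional $q\mapsto q\cdot v$ over the convex set $\nabla^+u(x)$, and such a minimizer cannot be interior, so $p\in\partial\nabla^+u(x)$ (if $\nabla^+u(x)$ has empty interior the conclusion is trivial, the set being closed and equal to its own boundary).

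Finally, for (v), I would read $\Omega$ as a compact set on a neighborhood of which $u$ is semiconcave; then $g=u-\tfrac{C}{2}\|\cdot\|^2$ is concave there, hence locally Lipschitz, so $\nabla g$ exists a.e.\ and is bounded on $\Omega$. Moreover the distributional Hessian $D^2g$ is a symmetric matrix of signed Radon measures which is negative semidefinite; decomposing it along coordinate and diagonal directions shows each component of $\nabla g$ has bounded variation on $\Omega$, and adding back the smooth field $Cx$ transfers both conclusions to $\nabla u$. The one genuinely delicate point in the whole proposition is the boundary claim of (iv) — forcing a reachable gradient onto the topological boundary of $\nabla^+u(x)$ rather than merely into it, through the limiting approach direction $v$ — so that is where I would be most careful; everything else is routine once the concavity dictionary is set up.
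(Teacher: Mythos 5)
The paper never proves this proposition: Appendix~\ref{sec:appendixA} states it as a list of well-known facts with a pointer to Cannarsa--Sinestrari and Bernard, and the introduction says explicitly that these properties are listed without proof. So there is nothing to match your argument against; what you have done is reconstruct the standard proofs, and you have done so correctly. The dictionary ``$u$ semiconcave with constant $C$ on convex $\Omega$ $\Leftrightarrow$ $g=u-\tfrac{C}{2}\|\cdot\|^2$ concave'', together with $\nabla^{\pm}u(x_0)=\nabla^{\pm}g(x_0)+Cx_0$, is exactly the textbook route; (ii) and (iii) are indeed pure definition-chasing that need no semiconcavity; and your treatment of the delicate half of (iv) is sound: the two-sided inequality at $x_k$ and at $x$ gives $(p-q)\cdot v\le 0$ for every $q\in\nabla^+u(x)$ with $v$ a limit of the normalized approach directions, and a minimizer of the nonconstant linear functional $q\mapsto q\cdot v$ on the closed convex set $\nabla^+u(x)$ cannot be interior, the empty-interior case being trivial. (Closedness and convexity of $\nabla^+u(x)$, which you use tacitly, follow at once from your own characterization in (i) as an intersection of closed half-space conditions.)

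Two remarks, neither of which is a gap in substance. First, the converse half of (i) as printed, with the inequality only at one base point $x_0$, yields $p_0\in\nabla^+u(x_0)$ but not semiconcavity of $u$ on $\Omega$ (in $d=1$, $u(x)=|x|$ satisfies the one-point inequality at $x_0=1$ with $p_0=1$, $C=2$, yet is not semiconcave near $0$); your parenthetical ``applied at every point of $\Omega$'' is therefore essential and is the correct reading of the intended statement. Second, item (v) is the only place where you lean on an unproved nontrivial ingredient, namely that the distributional Hessian of a concave function is a matrix of Radon measures (nonpositivity of the pure second derivatives plus polarization); this is standard, and your derivation of the boundedness of the gradient from the local Lipschitz property of concave functions on a neighborhood of the compact set is actually the right logical route, since boundedness does not follow from the ``hence'' in the printed statement.
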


\section{Twist maps of the $2$-dimensional annulus}\label{sec:appendixB}

We begin by introducing the notion of twist maps. 

\begin{definition}\label{definition:TwistMap}
A \emph{twist map} of the $2$-dimensional annulus is a $C^1$-diffeomorphism $f:\mathbb{T}\times \mathbb{R}^*\righttoleftarrow$ such that \begin{enumerate}
  \item $f$ is isotopic to identity ;
  \item $f$ is exact symplectic, i.e. if $f(\theta,p)=(\theta',p')$, then the $1$-form $p'd\theta'-pd\theta$ is exact;
  \item $f$ twists verticals to the right: if $F:\mathbb{R}\times \mathbb{R}^*\righttoleftarrow$, denoted by $F(x, p)=(y, p')$, is a lift of $f$, then there exists $\varepsilon>0$ such that\begin{align*}
    0<\frac{\partial y}{\partial p}(x,p)< \frac{1}{\varepsilon}, \quad \forall (x,p)\in \mathbb{R}\times \mathbb{R}^*.
  \end{align*}
\end{enumerate}
\end{definition}
\begin{definition}\label{definition:GeneratingFunction}
  A \emph{generating function} of the lift $F:\mathbb{R}\times \mathbb{R}^*\righttoleftarrow$ is a $C^2$ function $S:\mathbb{R}^2\rightarrow\mathbb{R}$ such that \begin{enumerate}
    \item $S$ is diagonal-periodic:\begin{align*}
      \forall (x,y)\in\mathbb{R}^2, \quad S(x+1,y+1)=S(x,y);
    \end{align*}
    \item  there exists $\varepsilon>0$ such that\begin{align*}
      \frac{\partial^2 S}{\partial x\partial y}(x,y)<-\varepsilon, \quad \forall (x,y)\in \mathbb{R}\times \mathbb{R}.
    \end{align*}
    \item for all $(x,p), (y,p')\in\mathbb{R}^2$,  we have the following equivalence\begin{align*}
      F(x,p)=(y,p')\Longleftrightarrow\begin{cases}
        p=-\partial_1S(x,y)\\ 
        p'=\partial_2S(x,y)
      \end{cases}.
    \end{align*}
  \end{enumerate}
\end{definition}

\begin{remark}
  For any lift $F$ of a twist map $f$, there exists a unique generating function $S$, up to addition by a constant, that satisfies Definition \ref{definition:GeneratingFunction} (see \cite[Appendix 1]{MacKey89}).
\end{remark}

\begin{proposition}\label{prop:ScHypothesis}
  Let $f$ be a twist map and $S$ be a generating function. For any $c\in \mathbb{R}$, the function $$S_c(x,y) := S(x, y) + c(x-y) $$ satisfies  Hypothesis \ref{hypothesis:S1}, Hypothesis \ref{hypothesis:S2} and Hypothesis \ref{hypothesis:S3}. 
\end{proposition}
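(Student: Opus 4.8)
The plan is to deduce all three hypotheses from the single structural fact that $S_c$ is a $C^{2}$, diagonal-periodic function satisfying a \emph{uniform twist inequality}: indeed $S_c(x+1,y+1)=S(x,y)+c(x-y)=S_c(x,y)$, and $\partial^{2}S_c/\partial x\partial y=\partial^{2}S/\partial x\partial y<-\varepsilon$ with the same $\varepsilon>0$ furnished by Definition \ref{definition:GeneratingFunction}. (Property 3 of that definition, and the twist-map structure of $f$ beyond what is encoded in $\varepsilon$, will not be needed.) Hence it suffices, and is notationally cleaner, to prove Hypotheses \ref{hypothesis:S1}--\ref{hypothesis:S3} for an arbitrary $C^{2}$, diagonal-periodic $T:\mathbb{R}^{2}\to\mathbb{R}$ with $\partial^{2}T/\partial x\partial y<-\varepsilon$; equivalently, one may take $c=0$. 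With this reduction, continuity (part of Hypothesis \ref{hypothesis:S1}) and local semiconcavity (part of Hypothesis \ref{hypothesis:S2}) are immediate, since a $C^{2}$ function has bounded Hessian on every compact convex set and is therefore semiconcave there; and $d=1$ (part of Hypothesis \ref{hypothesis:S3}) holds because we work on the $2$-dimensional annulus.

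\textbf{Non-crossing and ferromagnetic properties.} For the Non-crossing Lemma I would first note that it is unchanged when $S$ is replaced by $S_c$: the linear term contributes $c\big[(x_1-y_2)+(x_2-y_1)-(x_1-y_1)-(x_2-y_2)\big]=0$ to the difference $S_c(x_1,y_2)+S_c(x_2,y_1)-S_c(x_1,y_1)-S_c(x_2,y_2)$ (equivalently, the mixed second derivatives of $S$ and $S_c$ coincide). It then remains the classical Aubry--Le Daeron computation: for $x_1<x_2$ and $y_2<y_1$, Fubini gives $S(x_1,y_2)+S(x_2,y_1)-S(x_1,y_1)-S(x_2,y_2)=\int_{x_1}^{x_2}\!\!\int_{y_2}^{y_1}\frac{\partial^{2}S}{\partial x\partial y}\,dy\,dx<-\varepsilon(x_2-x_1)(y_1-y_2)<0$, and the configuration $x_1>x_2$, $y_1<y_2$ follows by exchanging the two index pairs; cf. \cite{aubry1983discrete}. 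For the ferromagnetic property, fix $x$: the map $y\mapsto\partial_1 S_c(x,y)=\partial_1 S(x,y)+c$ is $C^{1}$ with derivative $\partial_{12}S_c(x,y)<-\varepsilon$, hence strictly decreasing, and integrating this bound in $y$ shows $\partial_1 S_c(x,y)\to\mp\infty$ as $y\to\pm\infty$; by the intermediate value theorem it is therefore a strictly monotone homeomorphism from $\mathbb{R}$ onto $\mathbb{R}$. The same argument, applied to $x\mapsto\partial_2 S_c(x,y)=\partial_2 S(x,y)-c$ (whose $x$-derivative is again $\partial_{12}S_c<-\varepsilon$), disposes of the second map.

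\textbf{Coercivity and the main obstacle.} The only step needing more than a one-line remark is coercivity of $S_c$, precisely because one must check that the linear perturbation $c(x-y)$ does not destroy it. Here I would establish the quantitative lower bound
\[
S_c(x,y)\ \ge\ \frac{\varepsilon}{2}(x-y)^{2}-C_0\,|x-y|-C_1,
\]
where $C_0:=\sup_{a}|\partial_1 S_c(a,a)|$ and $C_1:=\sup_{a}|S_c(a,a)|$ are finite by the $1$-periodicity of $a\mapsto\partial_1 S_c(a,a)$ and $a\mapsto S_c(a,a)$. This follows by writing $S_c(x,y)=S_c(y,y)+(x-y)\int_0^1\partial_1 S_c\big(y+t(x-y),y\big)\,dt$ and inserting the elementary estimate $\partial_1 S_c(a,b)\ge\partial_1 S_c(a,a)+\varepsilon(a-b)$ for $a\ge b$ (and the reverse inequality for $a\le b$), itself obtained by integrating $\partial_{12}S_c<-\varepsilon$ in the second variable; carrying out the $t$-integration in the two cases $x>y$ and $x<y$ yields the displayed bound, whose right-hand side tends to $+\infty$ as $|x-y|\to+\infty$. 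The main obstacle is thus nothing deeper than making this quadratic lower bound explicit (rather than invoking bare coercivity of $S$, which would not obviously survive the addition of $c(x-y)$); every other assertion is a direct consequence of $C^{2}$-regularity, diagonal periodicity, and the sign of the mixed second derivative.
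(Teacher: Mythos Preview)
Your argument is correct. Compared with the paper's proof, you take a different route in two places. For the ferromagnetic property the paper invokes item~(3) of Definition~\ref{definition:GeneratingFunction} (the fact that $F$ is already given as a diffeomorphism encoded by $\partial_1 S$ and $\partial_2 S$), whereas you deduce it directly from the uniform bound $\partial_{12}S_c<-\varepsilon$ via strict monotonicity and surjectivity; your argument is thus entirely internal to the generating function and does not use the twist-map structure beyond the mixed-derivative estimate. For coercivity the paper appeals to the superlinearity lemma of MacKay--Meiss--Stark \cite{MacKey89}, while you derive the explicit quadratic lower bound $S_c(x,y)\ge \tfrac{\varepsilon}{2}(x-y)^2-C_0|x-y|-C_1$ by hand. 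The remaining items (diagonal-periodicity, continuity, local semiconcavity from $C^2$, and the Non-crossing Lemma by integrating $\partial_{12}S$ over a rectangle) are handled identically in both proofs. Your version has the advantage of being self-contained and of making the quantitative role of $\varepsilon$ explicit; the paper's version is shorter because it outsources the analytic estimates to existing references and to the assumed diffeomorphism $F$.
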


\begin{proof}
The diagonal-periodicity and ferromagnetic property of  $S_c$  follows directly from items (1) and (3) of Definition \ref{definition:GeneratingFunction}, respectively. Since $S$ is of class $C^2$, it follows that $S_c$ is of class $C^1$ and locally semiconcave (see \cite[Proposition 2.1.2]{Cannarsabook}). The remaining task is to demonstrate that $S_c$ is coercive.  This property follows from the superlinearity of the generating function $S$, expressed as \begin{align*}
    \lim_{|x-y|\rightarrow+\infty} \frac{S(x,y)}{|x-y|}=+\infty.
  \end{align*}
  This conclusion is an immediate consequence of a lemma from the work of  MacKay, Meiss and Stark \cite[Lemma A1.6]{MacKey89}. Furthermore, the Non-crossing Lemma \cite{aubry1983discrete} can be obtained by integrating $\frac{\partial^2 S}{\partial x\partial y}$ over a quadrangle; we omit its detailed proof here. 
 \end{proof}

 \begin{remark}
  \begin{enumerate}
    \item [(a)] The function $S_c$ is referred to as the generating function at the cohomology class $c$ of $\mathbb{T}$. As  proved  in Proposition \ref{prop:ScHypothesis}, all theories in this paper are applicable to   $S_c$. For instance, consider any $c$ in the first cohomology group $H^1(\mathbb{T},\mathbb{R})$, which is isomorphic to $\mathbb{R}$, the Discrete Weak K.A.M. Theorem \ref{theorem:DWKT} defines the effective interaction $\overline{S_c}$. The function $\alpha(c)=-\overline{S_c}$ is called  Mather's $\alpha$-function, which is  $C^1$. As a second example, the Aubry set $\mathcal{A}_c^*$ at cohomology class $c$ is $F_c$-invariant, where \begin{align*}
      F_c(x,p)=F(x,p+c)-(0,c).
    \end{align*}
    This corresponds to the discrete standard map associated with $S_c$.
    Consequently,  the set $(0,c)+\mathcal{A}_c^*$ is $F$-invariant, where $F$ is the lift of the twist map. Thus, by examining  the discrete weak K.A.M. theory of $S_c$, we can  investigate the dynamics of $F$ at the cohomology class  $c$.
    \item [(b)] Since the dimension $d=1$, the the Aubry set $\mathcal{A}_c^*$ at cohomology class $c$ possesses additional properties beyond Proposition \ref{prop:PropertyOfAubry} (see \cite[Theorem 11.10]{GT11}):  \begin{itemize}
      \item $(0,c)+\mathcal{A}^*_c$ is an $F$-ordered set, i.e., $\forall z,z'\in (0,c)+\mathcal{A}^*_c$, \begin{align*}
        \pi_1(z)<\pi_1(z')\quad \Longrightarrow \quad \pi_1(F(z))<\pi_1(F(z')).
      \end{align*}
      \item The rotation number $\rho(c)$ of $(0,c)+\mathcal{A}_c^*$ exists, i.e.,  the limit $\Lim{n\to\pm\infty}\frac{\pi_1(F^n(x,p))-x}{n}$ exists and is independent of $(x,p)\in (0,c)+\mathcal{A}^*_c$. In fact, $\rho(c)=\alpha'(c)$.
    \end{itemize}
  \end{enumerate}
 \end{remark}

\section{Proofs of Lemmas~\ref{lemma:RegularityOfSubaction} and \ref{lemma:WellDefined}}\label{sec:appendixC}
For selfcontainedness, we give proofs for Lemmas~\ref{lemma:RegularityOfSubaction} and \ref{lemma:WellDefined}.

\begin{proof}[Proof of Lemma~\ref{lemma:RegularityOfSubaction}]

(a). Suppose that $T^-[u](y)=u(x)+S(x,y)$ for some $(x,y)\in\mathbb{R}^{d}\times\mathbb{R}^{d}$. 
      
      To establish the first inclusion, $-\partial_1S(x,y)\in \nabla^-u(x)$, we begin with the definition of the discrete backward Lax-Oleinik operator : $T^-[u](y)=\inf_{z\in\mathbb{R}^d}u(z)+S(z,y)$. Then we have   \begin{align*}
          u(x)+S(x,y)\leq u(z)+S(z,y),\ \quad \forall z\in\mathbb{R}^d.
      \end{align*}
      Let $\Omega$ be a bounded convex open neighborhood of $x$. Since $S$ is locally semiconcave, we have $S(\cdot,y)$ is semiconcave on $\Omega$. 
      By utilizing item (i)   of Proposition~\ref{prop:PropertyOfSemiconcave}, there exists $C\geq 0$ such that  \begin{align*}
          S(z,y)-S(x,y)\leq \partial_1S(x,y)(z-x)+\frac{C}{2}\|z-x\|^2
      \end{align*}
      for any $z\in\Omega$. Combining these two inequalities,  we get \begin{align*}
          u(x)-u(z)\leq \partial_1S(x,y)(z-x)+\frac{C}{2}\|z-x\|^2,\ \quad \forall z\in\Omega.
      \end{align*}
      Hence, $\partial_1S(x,y)\in \nabla^+(-u)(x)$ follows from item (i) of Proposition \ref{prop:PropertyOfSemiconcave}. Since $\nabla^+(-u)(x)=-\nabla^-u(x)$, we conclude that $-\partial_1S(x,y)\in\nabla^-u(x)$.

      Now, we move on to the second inclusion,  $\partial_2S(x,y)\in\nabla^+T^-[u](y)$. 
      According to the definition of the discrete backward Lax-Oleinik operator $T^-$, we have  $T^-[u](\cdot)\leq u(x)+S(x,\cdot)$. Hence it follows from  items (ii) and (iii) of Proposition \ref{prop:PropertyOfSemiconcave} that \begin{align*}
          \{\partial_2S(x,y)\}=\nabla^+ (u(x)+S(x,\cdot))(y)\subset \nabla^+T^-[u](y).
      \end{align*}
      
(b). The proof for item (b) follows from a similar argument as that of (a).
\end{proof}

\begin{proof}[Proof of Lemma~\ref{lemma:WellDefined}]
    (a). Suppose there exists $x_1\ne x_2$ such that  $T^-[u](y)=u(x_1)+S(x_1,y)$ and $T^-[u](y)=u(x_2)+S(x_2,y)$.
          Using (a) of Lemma \ref{lemma:RegularityOfSubaction}, we get: \begin{align*}
               \{\partial_2S(x_1,y), \partial_2S(x_2,y)\}\subset \nabla^+T^-[u](y).
           \end{align*}
           The ferromagnetic condition of $S$ asserts that if $x_1\ne x_2$, then $\partial_2S(x_1,y)\ne \partial_2S(x_2,y)$. Hence $\nabla^+T^-[u](y)$ is not a singleton set. It follows from item (iii) of Proposition \ref{prop:PropertyOfSemiconcave} that $T^-[u]$ is not differentiable at $y$. 
      
           Thanks to item (a) of Remark \ref{remark:coercive}, we only need to demonstrate that  if $x$ is the unique point such that  $T^-[u](y)=u(x)+S(x,y)$, then $T^-[u]$ is differentiable at $y$. 
          
           For any converging sequence $y_k\rightarrow y$, let us pick $x_k$ such that $T^-[u](y_k)=u(x_k)+S(x_k,y_k)$. Since $x$ is the unique point such that  $T^-[u](y)=u(x)+S(x,y)$, we have $x_k\rightarrow x$ due to the continuity of functions $T^-[u]$, $u$ and $S$. Then \begin{align*}
               T^-[u](y_k)&=u(x_k)+S(x_k,y)+\left(S(x_k,y_k)-S(x
               _k,y)\right)\\ 
               &\geq T^-[u](y)+\left(S(x_k,y_k)-S(x
               _k,y)\right)\\ 
               &=T^-[u](y)+\partial_2S(x_k,y)(y_k-y)+o(\|y_k-y\|)
           \end{align*}
           Hence $\partial_2S(x,y)\in\nabla^-T^-[u](y)$ by the definition of superdifferential. Using item  (a) of Lemma~\ref{lemma:RegularityOfSubaction} and item (iii) of Proposition \ref{prop:PropertyOfSemiconcave}, we have that  $T^-[u]$ is differentiable at $y$.
           
           (b). The proof for item (b) follows from a similar argument as that of (a).
           \end{proof}

\end{appendix}

\section*{Acknowledgments}
The authors thank Prof. P. Thieullen for a careful reading of the manuscripts and many helpful suggestions during his visit at Beijing Normal University.


\bibliographystyle{alpha}
\bibliography{reference}

\end{document}